\newlength{\defbaselineskip}
\theoremstyle{plain}
\newtheorem{theorem}{Theorem}
\newtheorem{lemma}[theorem]{Lemma}
\theoremstyle{definition}
\newtheorem{example}[theorem]{Example}
\newtheorem{remark}[theorem]{Remark}
\newcommand{\boks}[1]{\ytableausetup{boxsize = #1 cm}}
\newcommand{\y}[1]{\ydiagram{#1}}
\newcommand{\yt}[1]{\ytableaushort{#1}}
\newcommand{\blk}{\underline{\hspace*{1em}}}
\newcommand{\hh}{\mathbf{h}}
\newcommand{\ee}{\mathbf{e}}
\newcommand{\rr}{\mathbf{r}}
\newcommand{\al}{\alpha}
\newcommand{\be}{\beta}
\newcommand{\ga}{\gamma}
\newcommand{\de}{\delta}
\newcommand{\eps}{\epsilon}
\newcommand{\la}{\lambda}
\newcommand{\si}{\sigma}
\newcommand{\smin}{\setminus}
\newcommand{\F}{\mathbb{F}}
\newcommand{\Z}{\mathbb{Z}}
\newcommand{\Q}{\mathbb{Q}}
\newcommand{\pow}{\boldsymbol{\psi}}
\newcommand{\mcER}{\mathcal{ER}}
\newcommand{\mcI}{\mathcal{I}}
\DeclareMathOperator{\ab}{ab}
\DeclareMathOperator{\cbt}{CBT}
\DeclareMathOperator{\cycC}{cycC}
\DeclareMathOperator{\cycP}{cycP}
\DeclareMathOperator{\dg}{dg}
\DeclareMathOperator{\rht}{RHT}
\DeclareMathOperator{\sgn}{sgn}
\DeclareMathOperator{\sort}{sort}
\DeclareMathOperator{\srht}{SRHT}
\DeclareMathOperator{\ssyt}{SSYT}
\DeclareMathOperator{\wt}{wt}
\DeclareMathOperator{\bt}{BT}
\DeclareMathOperator{\obt}{OBT}
\newcommand{\mcA}{\mathcal{A}}
\newcommand{\mcB}{\mathcal{B}}
\newcommand{\mcP}{\mathcal{P}}
\newcommand{\cc}{\mathbf{c}}
\newcommand{\bla}{\overline{\la}}
\newcommand{\bmu}{\overline{\mu}}
\newcommand{\tga}{\tilde{\gamma}}
\newcommand{\bB}{\overline{B}}
\DeclareMathOperator{\Surv}{Surv}
\DeclareMathOperator{\CS}{CS}
\let\bbmatrix\bordermatrix
\patchcmd{\bbmatrix}{8.75}{8.75}{}{}
\patchcmd{\bbmatrix}{\left(}{\left[}{}{}
\patchcmd{\bbmatrix}{\right)}{\right]}{}{}
\newlength{\cellsize}
\newcommand\tableau[1]{
\vcenter{
\let\\=\cr
\baselineskip=-16000pt
\lineskiplimit=16000pt
\lineskip=0pt
\halign{&\tableaucell{##}\cr#1\crcr}}}
\newcommand{\tableaucell}[1]{{%
\def \arg{#1}\def \void{}%
\ifx \void \arg
\vbox to \cellsize{\vfil \hrule width \cellsize height 0pt}%
\else
\unitlength=\cellsize
\begin{picture}(1,1)
\put(0,0){\makebox(1,1){$#1$}}
\put(0,0){\line(1,0){1}}
\put(0,1){\line(1,0){1}}
\put(0,0){\line(0,1){1}}
\put(1,0){\line(0,1){1}}
\end{picture}%
\fi}}
\begin{document}

% NAL: provisional title
%\title{A Local Approach to Combinatorial Matrix Inversion, with~Applications}
\title{A local framework for proving combinatorial matrix inversion theorems}
 
\author{Aditya Khanna
\and Nicholas A. Loehr}
\thanks{This work was supported by a Gift from the Simons
 Foundation/SFARI (\#633564 to Nicholas Loehr).} 
\address{Dept. of Mathematics, Virginia Tech, Blacksburg, VA 24061-0123}
\email{adityakhanna@vt.edu, nloehr@vt.edu}
 
\begin{abstract}
Combinatorial transition matrices arise frequently in the theory of
symmetric functions and their generalizations. The entries of such matrices
often count signed, weighted combinatorial structures such as semistandard 
tableaux, rim-hook tableaux, or brick tabloids.  Bijective proofs that two 
such matrices are inverses of each other may be difficult to find. This paper
presents a general framework for proving such inversion results in the case
where the combinatorial objects are built up recursively by successively
adding some incremental structure such as a single horizontal strip or
rim-hook. In this setting, we show that a sequence of matrix inversion results 
$A_nB_n=I$ can be reduced to a certain ``local'' identity involving the 
incremental structures.  Here, $A_n$ and $B_n$ are matrices
that might be non-square, and the columns of $A_n$ and the rows of $B_n$
indexed by compositions of $n$. We illustrate the general theory with four
classical applications involving the Kostka matrices, the character tables of 
the symmetric group, incidence matrices for composition posets, and matrices
counting brick tabloids. We obtain a new, canonical bijective proof of an
inversion result for rectangular Kostka matrices, which complements the proof 
for the square case due to E\u{g}ecio\u{g}lu and Remmel. %~\cite{inv-kostka}. 
We also give a new bijective proof of the orthogonality result for the
irreducible $S_n$-characters that is shorter than
the original version due to White. %~\cite{white-bij}.
% NAL: Reverted a change here: Dennis White is one person.
%  I omitted the first name for parallelism with previous reference.
\end{abstract}

\maketitle

\noindent\textbf{Keywords:} symmetric functions; quasisymmetric functions;
 transition matrices; Pieri Rules; semistandard tableaux; Kostka matrices; 
 rim-hook tableaux; brick tabloids; matrix inversion.
 
\noindent\textbf{2020 MSC Subject Classifications:}  % pend-res/msc2020.pdf
 05A19; 05A17; 15A09; 05E05. % maybe 15A24?

%%%%%%%%%%%%%%%%%%%%%%%%%%%%%%%%%%%%%%%%%%%%%%%%%%%%%%%%%%%%%%%%%%%%%%%%%%%%%
\section{Introduction}
\label{sec:intro} 

Combinatorial matrices occur ubiquitously in algebraic combinatorics,
especially in the theory of symmetric functions and their generalizations. 
Transition matrices connecting two bases of symmetric functions
(or quasisymmetric functions, etc.) very often have combinatorial
interpretations where the matrix entries count signed, weighted structures 
such as semistandard tableaux, rim-hook tableaux, or brick tabloids.  
For example, the classical Kostka matrix $\mathbf{K}$ gives the monomial 
expansion of the Schur symmetric functions. The matrix entry $K_{\la,\mu}$ 
identifies the coefficient of $m_{\mu}$ in $s_{\la}$ as the number of 
semistandard Young tableaux of shape $\la$ and content $\mu$.
E\u{g}ecio\u{g}lu and Remmel~\cite{inv-kostka} gave a combinatorial 
formula expressing each entry in $\mathbf{K}^{-1}$ as a signed sum
of special rim-hook tableaux. (See Section~\ref{sec:kostka} for detailed
definitions of the objects mentioned here.)

As another example, the character table of the symmetric group $S_n$
can be viewed as a matrix $\mathbf{X}=[\chi^{\la}_{\mu}]$ with rows and columns
indexed by integer partitions of $n$. Here, $\chi^{\la}_{\mu}$
is the value of the irreducible character $\chi^{\la}$ indexed by $\la$
on the conjugacy class of $S_n$ consisting of permutations of cycle type
$\mu$. Remarkably, $\chi^{\la}_{\mu}$ is also the coefficient of the
Schur function $s_{\la}$ when the power-sum symmetric function $p_{\mu}$
is written in the Schur basis. The entry $\chi^{\la}_{\mu}$ has a combinatorial
interpretation as the signed sum of rim-hook tableaux of shape $\la$ 
and content $\mu$. The inverse of $\mathbf{X}$ has entries 
$\chi^{\mu}_{\la}/z_{\la}$,
where $n!/z_{\la}$ is the number of permutations in $S_n$ with cycle
type $\la$.  The monograph~\cite{sagan} gives a very clear
exposition of the results stated in this paragraph. 

Bijective proofs that two combinatorial matrices are inverses of each other 
may be difficult to find. In the case of the Kostka matrix,
E\u{g}ecio\u{g}lu and Remmel~\cite{inv-kostka} gave an ingenious proof
based on a sign-reversing involution on pairs consisting of a semistandard
tableau and a special rim-hook tableau with compatible content.
In the case of the character matrix $\mathbf{X}$, the formula for
$\mathbf{X}^{-1}$ follows from the orthogonality relations for irreducible
characters. Finding a bijective proof based on rim-hook tableaux
is very challenging; Dennis White gave such a proof via an
intricate algorithmic construction~\cite{white-bij}. There are many
other instances of combinatorial transition matrices where inversion
results require elaborate algebraic manipulations, subtle bijective arguments,
or some combination of these.

Our goal here is to develop a general framework for proving 
inversion results for certain combinatorial matrices. 
We often have not just one matrix and its inverse, but a whole
family of matrices. For example, what we have called the Kostka
matrix is really a sequence of matrices $\mathbf{K}_n$ for $n\geq 0$,
where the rows and columns of $\mathbf{K}_n$ are
indexed by integer partitions of $n$. A simple recursion (based on removing
the largest value from a semistandard tableau) relates the
entries of $\mathbf{K}_n$ to entries in various smaller matrices
$\mathbf{K}_m$. 

Our general theory, presented in Section~\ref{sec:gen-framework},
considers two sequences of matrices $(A_n:n\geq 0)$ and $(B_n:n\geq 0)$
where the columns of $A_n$ and the rows of $B_n$
are indexed by compositions of $n$. These matrices may be rectangular
rather than square. We assume that the entries in $A_n$ can be computed
recursively from certain entries in $A_0,\ldots,A_{n-1}$, and similarly
for $B_n$. This setup applies to many combinatorial matrices (such as 
suitably adjusted versions of the Kostka matrices and their inverses)
where the combinatorial objects are built up recursively by 
successively adding some incremental structure such as a single horizontal 
strip, special rim-hook, etc.  In this setting, we show that the sequence of 
(one-sided) matrix inversion results $A_nB_n=I$ is equivalent to a certain 
``local'' identity involving the incremental structures. Intuitively,
this local identity contains the combinatorial essence of why each $B_n$
is a right-inverse of $A_n$.  Section~\ref{sec:automatic-bij} explains
how a bijective proof of the local identity can often be leveraged to create a
bijective proof of the matrix inversion result itself.

We illustrate our general theory with four classical applications 
that have relevance to transition matrices for bases of Sym
(the space of symmetric functions), QSym (the space of quasisymmetric 
functions), and NSym (the space of non-commutative symmetric functions).
Section~\ref{sec:kostka} studies rectangular versions of the Kostka
matrix and its inverse. Section~\ref{subsec:invert-Kostka} uses this
analysis to build a new bijective proof of the inversion result for 
rectangular Kostka matrices. This bijection is canonical 
(in a certain precise technical sense) and differs from the bijection
for the square case due to E\u{g}ecio\u{g}lu and Remmel~\cite{inv-kostka}.
Section~\ref{sec:RHT} studies rectangular versions of the matrix 
$\mathbf{X}$ and its inverse. Here, the local identity has an
``almost canonical'' bijective proof that can be expressed elegantly
in terms of abaci. Section~\ref{subsec:appl2-bij} lifts this to a
bijective proof of the matrix inversion result that is substantially
shorter than Dennis White's original version~\cite{white-bij}.
The proof uses novel combinatorial interpretations of $n!$ parametrized
by partitions $\la$ (see Remark~\ref{rem:novel-n!}).
Section~\ref{sec:comp-ref} uses our general theory to invert the
incidence matrices for posets of compositions of $n$ ordered by refinement.
Although this classical result is easy enough to prove algebraically,
the combinatorial proof of the local identity is still illuminating.
The latter proof also generalizes readily to prove a more daunting weighted 
version of the classical result (Section~\ref{subsec:wtd-appl3}) with relevance
to certain transition matrices for NSym.
Section~\ref{sec:brick-tabl} gives a fourth application involving
matrices counting brick tabloids. These matrices connect the power-sum
and monomial bases of symmetric functions.

Most of this paper (except for a few isolated remarks) can be read
without any prior knowledge of Sym, QSym, NSym, or the various combinatorial
structures mentioned in this introduction. The four application sections
are almost entirely independent of one another, except that
Lemma~\ref{lem:harmonic} and some definitions in Section~\ref{sec:comp-ref} 
are needed in Section~\ref{sec:brick-tabl}. 
% NAL: Does this accurately reflect all dependencies?

%%%%%%%%%%%%%%%%%%%%%%%%%%%%%%%%%%%%%%%%%%%%%%%%%%%%%%%%%%%%%%%%%%%%%%%%%%%%%
\section{General Framework for Matrix Inversion}
\label{sec:gen-framework}

This section presents the general theory for proving combinatorial
matrix inversion results via reduction to local identities.
We begin by defining the needed notation and terminology.

%%%%%%%%%%%%%%%%%%%%%%%
\subsection{Notation and Definitions}
\label{subsec:defs}

For each integer $n\geq 0$, a \emph{composition of $n$} is a
list $\al=(\al_1,\al_2,\ldots,\al_s)$ of positive integers 
with $\al_1+\cdots+\al_s=n$. The \emph{size} of $\al$ is $|\al|=n$,
and the \emph{length} of $\al$ is $\ell(\al)=s$. 
The entries $\al_i$ are the \emph{parts} of $\al$.
Write $L(\al)=\al_s$ for the \emph{last part} of $\al$.
If $s\geq 1$, write $\al^*=(\al_1,\al_2,\ldots,\al_{s-1})$
for the \emph{truncation of $\al$}, which is the composition
of $n-\al_s$ obtained by deleting the last part of $\al$.
An \emph{integer partition} is a composition
$\la=(\la_1,\la_2,\ldots,\la_s)$ satisfying
$\la_1\geq\la_2\geq\cdots\geq\la_s$.

Write $C(n)$ for the set of compositions of $n$.
Write $P(n)$ for the set of partitions of $n$.
The empty sequence is the unique element in $C(0)$ and $P(0)$.
Write $[n]$ for the set $\{1,2,\ldots,n\}$.
Define $\sort:C(n)\rightarrow P(n)$ by letting $\sort(\al)$
be the weakly decreasing rearrangement of the list $\al\in C(n)$.
Given $\al\in\ C(n)$, the \emph{diagram} $\dg(\al)$ of $\al$ is the
set $\{(i,j)\in\Z^2: 1\leq i\leq \ell(\al), 1\leq j\leq \al_i\}$.
We visualize $\dg(\al)$ as a left-justified array of unit boxes
with $\al_i$ boxes in the $i$th row from the top. The box in row $i$,
column $j$ is identified with the pair $(i,j)\in\dg(\al)$.
\boks{0.2}
\ytableausetup{aligntableaux = center}
\begin{example}
    For $\al = (2,4,2,3)$ we have $\dg(\al) = \y{2,4,2,3}*[*(gray)]{0,2+1}$ and the marked box has the label $(2,3)$.
\end{example}
For any finite sets $R$ and $C$ and field $\F$, an \emph{$R\times C$ matrix}
is a function $A:R\times C\rightarrow\F$. For $r\in R$ and $c\in C$,
we call $A(r,c)$ the \emph{entry in row $r$, column $c$ of $A$}.
For any statement $Q$, let $\chi(Q)=1$ if $Q$ is true
and $\chi(Q)=0$ if $Q$ is false.
For any finite set $R$, let $I_R$ be the \emph{identity matrix}
with rows and columns indexed by $R$, 
which satisfies $I_R(\la,\mu)=\chi(\la=\mu)$ for all $\la,\mu\in R$.
We may omit the subscript $R$ if it is clear from context.

%%%%%%%%%%%%%%%%%%%%%%%
\subsection{Setup for the Matrix Inversion Framework}
\label{subsec:setup}

Next we describe the assumed setup for our matrix inversion framework.
The ingredients in the setup consist of two sequences of matrices,
with rows and columns indexed by certain combinatorial objects,
and two recursions that describe how the matrices in each sequence
are built from earlier matrices in that sequence.

We begin by describing the matrices.
Let $\F$ be a field (which in our applications will be $\Q$).
Suppose, for each integer $n\geq 0$, we have a finite set of objects $R(n)$,
where $R(0)$ consists of a single element. (In all applications considered
in this paper, $R(n)$ will be either $P(n)$ or $C(n)$.)
Suppose we are given a sequence of matrices $A_0,A_1,A_2,\ldots,A_n,\ldots$
where each $A_n$ is an $R(n)\times C(n)$ matrix with values in $\F$.
Thus, the rows of $A_n$ are indexed by objects in $R(n)$,
while the columns of $A_n$ are indexed by compositions of $n$.
We may abbreviate $A_n(\la,\be)$ as $A(\la,\be)$, since $n$ must be $|\be|$.
Also suppose we are given a sequence of matrices 
$B_0,B_1,B_2,\ldots,B_n,\ldots$, where each $B_n$ is a $C(n)\times R(n)$ matrix
with values in $\F$. Again we write $B(\be,\mu)=B_n(\be,\mu)$ where $n=|\be|$.

We continue by describing the recursions assumed as part of the setup.
For each $n>0$, $\la\in R(n)$, and $L\in [n]$, assume there is a
finite set $S(\la,L)\subseteq R(n-L)$ and an $\F$-valued weight function 
% domain here isn't quite right: $R(n)\times S(\la,L)\rightarrow\F$ (omit it).
$\wt_A$ such that the matrices $A_n$ satisfy the recursion
\begin{equation}\label{eq:A-rec}
 A(\la,\be)=\sum_{\ga\in S(\la,L(\be))} \wt_A(\la,\ga)A(\ga,\be^*)
\end{equation}
and initial condition $A_0=[1]$. We interpret this recursion informally
as follows.  The entry $A(\la,\be)$ counts signed, weighted objects with
\emph{shape} $\la\in R(n)$ and \emph{content} 
$\be=(\be_1,\ldots,\be_s)=(\be^*,L(\be))$. The recursion asserts
that each such object can be built uniquely by starting with some
smaller shape $\ga\in R(n-L(\be))$, drawn from some set $S(\la,L(\be))$
of allowable shapes depending on $\la$ and $L(\be)$, choosing any object
of shape $\ga$ and content $\be^*$ counted by $A(\ga,\be^*)$, and
augmenting that object with some incremental structure. The effect
of the augmenting step is reflected by the factor $\wt_A(\la,\ga)$.
The initial condition amounts to the assumption that there is a
unique ``empty object'' with empty shape in $R(0)$ and empty content in $C(0)$.

We make the analogous assumption for the sequence of matrices $B_n$.
For each $n>0$, $\mu\in R(n)$, and $L\in [n]$, assume there is a
finite set $T(\mu,L)\subseteq R(n-L)$ and an $\F$-valued weight function 
$\wt_B$ such that the matrices $B_n$ satisfy the recursion
\begin{equation}\label{eq:B-rec}
 B(\be,\mu)=\sum_{\de\in T(\mu,L(\be))} \wt_B(\mu,\de)B(\be^*,\de)
\end{equation}
and initial condition $B_0=[1]$. This recursion has a similar
interpretation as before, but here the column index $\mu$ gives
the shape and the row index $\be$ gives the content of the
objects counted by the entries in $B_n$. In some applications,
the roles of shape and content in recursions~\eqref{eq:A-rec}
or~\eqref{eq:B-rec} may be interchanged.

%%%%%%%%%%%%%%%%%%%%%%%
\subsection{Matrix Inversion via the Local Identity}
\label{subsec:local-id}

We now come to the first main result of the general theory.

\begin{theorem}\label{thm:main}
Assume the setup in~\S\ref{subsec:setup}. The family of matrix identities
\begin{equation}\label{eq:gen-inv}
 A_n B_n = I_{R(n)}\quad\mbox{ for all $n\geq 0$} 
\end{equation}
is equivalent to the family of \emph{local identities}
\begin{equation}\label{eq:gen-local}
 \sum_{L=1}^n\sum_{\ga\in S(\la,L)\cap T(\mu,L)} \wt_A(\la,\ga)\wt_B(\mu,\ga)
  =\chi(\la=\mu)
 \quad\mbox{ for all $n>0$ and all $\la,\mu\in R(n)$.}
\end{equation}
\end{theorem}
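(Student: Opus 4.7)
The plan is to prove both implications by induction on $n$, exploiting the decomposition of a composition $\be\in C(n)$ into its truncation $\be^*\in C(n-L(\be))$ and its last part $L(\be)$. The base case $n=0$ is immediate: both $A_0B_0=I_{R(0)}$ and the local identity (which is vacuous for $n=0$) hold trivially since $R(0)$ is a singleton and $A_0=B_0=[1]$.

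For the inductive step, I will carry out the following key calculation for fixed $n>0$ and $\la,\mu\in R(n)$. Expanding the matrix product and partitioning the sum over $\be\in C(n)$ according to $L=L(\be)\in[n]$, so that $\be=(\be^*,L)$ with $\be^*$ ranging over $C(n-L)$, gives
\begin{equation*}
(A_nB_n)(\la,\mu)
 =\sum_{L=1}^n\sum_{\be^*\in C(n-L)}A(\la,(\be^*,L))\,B((\be^*,L),\mu).
\end{equation*}
Substituting the recursions~\eqref{eq:A-rec} and~\eqref{eq:B-rec} and interchanging the order of summation, the inner sum over $\be^*$ isolates the product $A_{n-L}(\ga,\be^*)B_{n-L}(\be^*,\de)$, producing
\begin{equation*}
(A_nB_n)(\la,\mu)
 =\sum_{L=1}^n\sum_{\ga\in S(\la,L)}\sum_{\de\in T(\mu,L)}
 \wt_A(\la,\ga)\,\wt_B(\mu,\de)\,(A_{n-L}B_{n-L})(\ga,\de).
\end{equation*}
This identity, holding unconditionally, is the technical core of the theorem and is the one step that requires care to set up correctly.

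For the forward direction (local identities imply matrix identities), I will argue by strong induction on $n$. Assuming $A_mB_m=I_{R(m)}$ for all $m<n$, each factor $(A_{n-L}B_{n-L})(\ga,\de)$ in the key identity collapses to $\chi(\ga=\de)$, so the triple sum reduces to $\sum_{L=1}^n\sum_{\ga\in S(\la,L)\cap T(\mu,L)}\wt_A(\la,\ga)\wt_B(\mu,\ga)$, which equals $\chi(\la=\mu)$ by hypothesis~\eqref{eq:gen-local}. For the reverse direction (matrix identities imply local identities), I again induct on $n$: assuming all matrix identities hold (in particular for indices less than $n$), the same collapse occurs on the right-hand side of the key identity, while the left-hand side equals $\chi(\la=\mu)$; equating the two yields~\eqref{eq:gen-local}.

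The main obstacle is bookkeeping rather than conceptual: one must verify that the substitution of the two recursions and the swap of summations produce precisely $(A_{n-L}B_{n-L})(\ga,\de)$ as the inner factor, with the constraints $\ga\in S(\la,L)$ and $\de\in T(\mu,L)$ correctly decoupled from $\be^*$. Once that identity is in place, both implications fall out simultaneously by the same inductive mechanism, and the proof is complete.
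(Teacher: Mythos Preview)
Your proposal is correct and follows essentially the same approach as the paper: both derive the key intermediate identity $(A_nB_n)(\la,\mu)=\sum_{L}\sum_{\ga}\sum_{\de}\wt_A(\la,\ga)\wt_B(\mu,\de)(A_{n-L}B_{n-L})(\ga,\de)$ and then read off each implication by collapsing the inner factor to $\chi(\ga=\de)$. The only cosmetic difference is that the direction ``matrix identities $\Rightarrow$ local identities'' needs no induction at all---since you are assuming $A_mB_m=I_{R(m)}$ for \emph{all} $m$, you may simply substitute at levels $n$ and $n-L$ directly---but your argument is valid as stated.
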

\begin{proof}
For $n>0$ and $\la,\mu\in R(n)$, we compute the entry in row $\la$,
column $\mu$ of $A_nB_n$ as follows:
\begin{align*}
(A_n B_n)(\la,\mu)
 &= \sum_{\be\in C(n)} A(\la,\be)B(\be,\mu)
\\ &=\sum_{\be\in C(n)}
\left(\sum_{\ga\in S(\la,L(\be))} \wt_A(\la,\ga)A(\ga,\be^*)\right)\cdot
\left(\sum_{\de\in T(\mu,L(\be))} \wt_B(\mu,\de)B(\be^*,\de)\right).
\end{align*}
Each $\be\in C(n)$ has the form $\be=(\be^*,L)$ for
a unique $L=L(\be)\in [n]$ and a unique $\be^*\in C(n-L)$.
So we can rewrite the sum over $\be$ as a sum over $\be^*$ and $L$. We get:
\begin{align*}
(A_n B_n)(\la,\mu)
 &=\sum_{L=1}^n\sum_{\be^*\in C(n-L)}
\left(\sum_{\ga\in S(\la,L)} \wt_A(\la,\ga)A(\ga,\be^*)\right)\cdot
\left(\sum_{\de\in T(\mu,L)} \wt_B(\mu,\de)B(\be^*,\de)\right)
\\ &=\sum_{L=1}^n\sum_{\ga\in S(\la,L)}\sum_{\de\in T(\mu,L)}
  \wt_A(\la,\ga)\wt_B(\mu,\de)\sum_{\be^*\in C(n-L)} A(\ga,\be^*)B(\be^*,\de).
\end{align*}
We recognize the innermost sum as the definition of a matrix product entry, so
\begin{equation}\label{eq:gen-intmed}
(A_n B_n)(\la,\mu)=\sum_{L=1}^n\sum_{\ga\in S(\la,L)}\sum_{\de\in T(\mu,L)}
  \wt_A(\la,\ga)\wt_B(\mu,\de) (A_{n-L}B_{n-L})(\ga,\de).
\end{equation}

On one hand, assume~\eqref{eq:gen-inv} holds for all $n\geq 0$.
Then, in particular, $(A_nB_n)(\la,\mu)=\chi(\la=\mu)$
and $(A_{n-L}B_{n-L})(\ga,\de)=\chi(\ga=\de)$. Inserting these
expressions into~\eqref{eq:gen-intmed}, the double sum over
$\ga$ and $\de$ simplifies to a single sum over 
$\ga=\de\in S(\la,L)\cap T(\mu,L)$. 
So~\eqref{eq:gen-intmed} reduces to~\eqref{eq:gen-local}, as needed.

On the other hand, assume instead that~\eqref{eq:gen-local} holds.
We prove~\eqref{eq:gen-inv} by induction on $n$.
For $n=0$, the formula is true since $A_0=B_0=[1]$.
Fix $n>0$ and assume $A_mB_m=I_{R(m)}$ for all $m<n$.
Taking $m=n-L$ for $L=1,2,\ldots,n$, Equation~\eqref{eq:gen-intmed} becomes
\[ (A_n B_n)(\la,\mu)=\sum_{L=1}^n\sum_{\ga\in S(\la,L)}\sum_{\de\in T(\mu,L)}
  \wt_A(\la,\ga)\wt_B(\mu,\de)\chi(\ga=\de). \]
As before, the right side here simplifies to the left side
of the local identity~\eqref{eq:gen-local}. Thus,
$(A_nB_n)(\la,\mu)=\chi(\la=\mu)$ for all $\la,\mu\in R(n)$, 
proving that $A_nB_n=I_{R(n)}$.
\end{proof}

\begin{remark}\label{rem:sort-mat}
The theorem does not assert that $B_nA_n=I_{C(n)}$. 
In most of our applications, $R(n)$ will be $P(n)$, 
and hence $B_nA_n=I_{C(n)}$ cannot possibly hold
because the left side has rank at most $|P(n)|$.
However, we can often convert the one-sided matrix
inverse formula $A_nB_n=I_{R(n)}$ (for rectangular matrices $A_n,B_n$)
to a related formula involving square matrices.
In the square case, $AB=I$ automatically implies $BA=I$, as is well known.
%Examples of this conversion appear in the application sections below.

In many applications, $A_n$ is a $P(n)\times C(n)$ matrix
satisfying the following \emph{sorting condition}: 
for all $\la\in P(n)$ and $\al,\be\in C(n)$,
if $\sort(\al)=\sort(\be)$ then $A(\la,\al)=A(\la,\be)$.
In this case, we can convert from rectangular to square matrices as follows.
Define $A_n':P(n)\times P(n)\rightarrow\F$ to be the restriction of
$A_n$ to the rows and columns indexed by partitions.
Define $B_n':P(n)\times P(n)\rightarrow\F$ by
\begin{equation}\label{eq:sort-inv}
 B_n'(\nu,\mu)=\sum_{\substack{\be\in C(n):\\\sort(\be)=\nu}} B_n(\be,\mu)
\quad\mbox{ for $\nu,\mu\in P(n)$.} 
\end{equation}
If we know $A_nB_n=I_{P(n)}$, then we can 
deduce $A_n'B_n'=I_{P(n)}$ as follows. For $\la,\mu\in P(n)$,
\begin{align*}
 (A_n'B_n')(\la,\mu) &= \sum_{\nu\in P(n)} A_n'(\la,\nu)B_n'(\nu,\mu)
 =\sum_{\nu\in P(n)} A_n(\la,\nu)
     \sum_{\substack{\be\in C(n):\\\sort(\be)=\nu}} B_n(\be,\mu)
\\ &= \sum_{\nu\in P(n)}\sum_{\substack{\be\in C(n):\\\sort(\be)=\nu}}
      A_n(\la,\be)B_n(\be,\mu)
 = \sum_{\be\in C(n)} A_n(\la,\be)B_n(\be,\mu)=(A_nB_n)(\la,\mu).
\end{align*}
\end{remark}

\begin{remark}
Here is an informal combinatorial interpretation of the 
local identities~\eqref{eq:gen-local}. Fix $\la,\mu\in R(n)$.
We consider all possible ways of transforming $\la$ into $\mu$
via the following two-step process. First, go from $\la$
to some intermediate object $\ga$ by removing some differential
structure of size $L$ (determined by the recursion for the $A$ matrix). 
Second, go from this $\ga$ to $\mu$ by adding another differential
structure of size $L$ (determined by the recursion for the $B$ matrix). 
For fixed $\ga$, the transition from $\la$ to $\mu$ via $\ga$
contributes a term $\wt_A(\la,\ga)\wt_B(\mu,\ga)$ that could
be negative. The sum of such terms over all feasible $\ga$
is required to be $1$ if $\la=\mu$ and $0$ if $\la\neq\mu$.
In later applications, we use the notation 
$G(\la,\mu)=\bigcup_{L=1}^n S(\la,L)\cap T(\mu,L)$
for the set of $\ga$ that are viable intermediate shapes
in the passage from $\la$ to $\mu$. The local identities~\eqref{eq:gen-local}
take the form $$\sum_{\ga\in G(\la,\mu)} \wt_A(\la,\ga)\wt_B(\mu,\ga)
 =\chi(\la=\mu).$$
\end{remark} 

\iffalse
\subsection{Building a Bijective Proof of $AB=I$}
\label{subsec:bij-proof}

(Move first paragraph to last section of paper, in some form.)
Assume we have bijective proofs of recursion~\eqref{eq:A-rec}
and recursion~\eqref{eq:B-rec}, as well as a bijective proof
of the local identities~\eqref{eq:gen-local} based on a sign-reversing
involution. (Spell this out.) Show how these combine with the proof of 
Theorem~\ref{thm:main} to give a bijective proof of $A_nB_n=I_{R(n)}$.
Probably need to see how it works in the four specific examples
before writing this general description. Notebooks have examples.

(Move this part to introduction.)
\textbf{Note:} In this draft, I am trying to write the following 4 sections so 
they can be read independently of each other. This explains the 
near-repetition in some of the definitions (special rim-hooks vs. rim-hooks). 
\fi

%%%%%%%%%%%%%%%%%%%%%%%%%%%%%%%%%%%%%%%%%%%%%%%%%%%%%%%%%%%%%%%%%%%%%%%%%%%%%
\section{Application 1: Rectangular Kostka Matrices}
\label{sec:kostka}

As the first application of the general theory, we prove an inversion
result for a rectangular version of the Kostka matrices. In this case,
the local identities~\eqref{eq:gen-local} can be informally summarized 
by the slogan:
``A local inverse of a horizontal strip is a signed special rim-hook.''

%%%%%%%%%%%%%%%%%%%%%%%
\subsection{Semistandard Tableaux}
\label{subsec:SSYT}

Given $\la\in P(n)$ and $\be\in C(n)$, 
a \emph{semistandard Young tableau (SSYT)} 
of \emph{shape} $\la$ and \emph{content} $\be$ is a filling of the 
diagram of $\la$ using $\be_k$ copies of $k$ for each $k$, such that
values weakly increase reading left to right along each row, and
values strictly increase reading top to bottom along each column.
Let $\ssyt(\la,\be)$ be the set of all such SSYT.
In this first application of the general setup of~\S\ref{subsec:setup},
we let $A_n$ be the $P(n)\times C(n)$ \emph{rectangular Kostka matrix} 
with entries $A_n(\la,\be)=|\ssyt(\la,\be)|$.

\begin{example}
    Let $\be = (2,3,2)$ and $\la = (4,3)$. The entry $A_n(\la, \be) =2$ 
counts the following tableaux in $\ssyt(\la, \be)$:
    \boks{0.4}
    \[
    \yt{1122,233} \qquad \yt{1123,223}
    \]
\end{example}

A \emph{horizontal strip of size $L$} is a set of $L$ unit 
boxes occupying distinct columns. Given $\la\in P(n)$ and $\be\in C(n)$,
suppose $T\in\ssyt(\la,\be)$ and write
$\be=(\be_1,\ldots,\be_s)=(\be^*,L(\be))$.
For $1\leq i\leq s$, the cells containing value $i$ in $T$
form a horizontal strip of size $\be_i$, as is readily checked.
Also, the cells containing values $1,2,\ldots,i$ form the diagram
of some partition $\la^{(i)}$ where $\dg(\la^{(i)})\subseteq\dg(\la)$.
Let $S(\la,L)$ be the set of partitions $\ga\in P(n-L)$ such that
$\dg(\ga)\subseteq\dg(\la)$ and the set difference 
$\la/\ga=\dg(\la)\smin\dg(\ga)$ is a horizontal strip of size $L$.
There is exactly one way to construct any given $T\in\ssyt(\la,\be)$ 
as follows: choose $\ga\in S(\la,L(\be))$;
choose any $T^*\in\ssyt(\ga,\be^*)$; and fill the $L(\be)$ boxes
in $\dg(\la)\smin\dg(\ga)$ with the value $s=\ell(\be)$.
This proves the recursion
\begin{equation}\label{eq:A-rec1}
 A(\la,\be)=\sum_{\ga\in S(\la,L(\be))} A(\ga,\be^*),
\end{equation}
which is an instance of~\eqref{eq:A-rec} with $\wt_A(\la,\ga)=1$.
In fact, we have a bijection
$F:\ssyt(\la,\be)\rightarrow\bigcup_{\ga\in S(\la,L(\be))} \ssyt(\ga,\be^*)$,
where $F(T)$ is the SSYT obtained by removing the $L(\be)$ boxes 
containing the value $s=\ell(\be)$ from $T$. 
The construction prior to~\eqref{eq:A-rec1} describes how to compute $F^{-1}$.

%%%%%%%%%%%%%%%%%%%%%%%
\subsection{Special Rim-Hook Tableaux}
\label{subsec:SRHT}

E\u{g}ecio\u{g}lu and Remmel~\cite{inv-kostka} discovered a combinatorial
interpretation for the inverse of the (square) $P(n)\times P(n)$ Kostka 
matrix involving special rim-hook tableaux. 
We define a rectangular analogue of their inverse Kostka matrix,
which also appeared in~\cite{ELW}.
A \emph{special rim-hook} (SRH) of size $L$ is a set of $L$ unit boxes
that can be traversed by starting with a box in column $1$ (the leftmost
column) and successively taking one step to the right or up to go from one box
to the next.  The \emph{sign} of a SRH $\sigma$ occupying $r$ rows is
$\sgn(\sigma)=(-1)^{r-1}$.

Given $\mu\in P(n)$ and $\be\in C(n)$, 
a \emph{special rim-hook tableau (SRHT)} of \emph{shape} $\mu$
and \emph{content} $\be$ is a filling $S$ of the diagram of $\mu$ such
that for all $k$, the cells of $S$ containing $k$ form a special rim-hook
of size $\be_k$, and the values in column 1 weakly increase from top to bottom.
Let $\srht(\mu,\be)$ be the set of all such SRHT.
The \emph{sign} of an SRHT $S$ is the product of the signs of all special
rim-hooks in $S$.  Let $B_n$ be the $C(n)\times P(n)$ matrix defined by
$B_n(\be,\mu)=\sum_{S\in\srht(\mu,\be)} \sgn(S)$.

% NAL: Remark that was here now appears after the recursion.
\begin{example}
For $\be = (3,2,4)$ and $\mu = (3,3,3)$, $\srht(\mu,\be)$ is the set
containing this object
\[ \yt{111,223,333} \]
which has sign $(-1)^{1-1}(-1)^{1-1}(-1)^{2-1} = -1$.
If $\be = (2,4,3)$ and $\mu = (3,3,3)$, we have this SRHT with sign $-1$:
\[ \yt{112,222,333} \]
For $\be = (4,2,3)$ and $\mu = (3,3,3)$, we see that $\srht(\mu, 
\be) = \varnothing$.
\end{example}

Let $S\in\srht(\mu,\be)$ where $\be=(\be_1,\ldots,\be_s)=(\be^*,L(\be))$.
Removing the special rim-hook $\sigma$ in $S$ of size $L(\be)$ containing the
value $s=\ell(\be)$ leaves a smaller SRHT $S^*$ of content $\be^*$
and some partition shape $\de$, as is readily checked.
Moreover, $\sgn(S)=\sgn(S^*)\sgn(\sigma)$.
Let $T(\mu,L)$ be the set of partitions $\de\in P(n-L)$ such that
$\dg(\de)\subseteq\dg(\mu)$ and $\mu/\de=\dg(\mu)\smin\dg(\de)$ is a
special rim-hook of size $L$. Removing the last special rim-hook gives a
bijection $G:\srht(\mu,\be)\rightarrow\bigcup_{\de\in T(\mu,L(\be))}
 \srht(\de,\be^*)$. Taking signs into account, we deduce the recursion
\begin{equation}\label{eq:B-rec1}
 B(\be,\mu)=\sum_{\de\in T(\mu,L(\be))} \sgn(\mu/\de)B(\be^*,\de),
\end{equation}
which is an instance of~\eqref{eq:B-rec} with $\wt_B(\mu,\de)=\sgn(\mu/\de)$.
A version of~\eqref{eq:B-rec1} appears in~\cite[Sec. 4]{ELW}.

\begin{remark}
For given $\mu\in P(n)$ and $L\in [n]$, there is at most
one way to remove a special rim-hook of size $L$ from the southeast rim of 
$\dg(\mu)$ to leave a partition shape. In other words, $|T(\mu,L)|\leq 1$.
Iterating this reasoning shows that $|\srht(\mu,\be)|\leq 1$
for all $\mu\in P(n)$ and all $\be\in C(n)$.
\end{remark}

% NAL: Moved the matrix example here.
\begin{example}
For $n = 4$, the rectangular matrices are as follows:
    \begin{footnotesize}
    \[ % CHECKED by NAL.
    A_4:\quad \bbmatrix{~ & 4 & 31 & 22 & 211 & 13 & 121 & 112 & 1111\cr
    4 & 1 & 1 & 1 & 1 & 1 & 1 & 1 & 1 \cr
31 & 0 & 1 & 1 & 2 & 1 & 2 & 2 & 3 \cr
22 & 0 & 0 & 1 & 1 & 0 & 1 & 1 & 2 \cr
211 & 0 & 0 & 0 & 1 & 0 & 1 & 1 & 3 \cr
1111 & 0 & 0 & 0 & 0 & 0 & 0 & 0 & 1
    }
    \qquad B_4:\quad \bbmatrix{~ & 4 & 31 & 22 & 211 & 1111\cr
4 & 1 & -1 & 0 & 1 & -1 \cr
31&0 & 1 & 0 & -1 & 1 \cr
22& 0 & 0 & 1 & -1 & 1 \cr
211 & 0 & 0 & 0 & 1 & -1 \cr
13 & 0 & 0 & -1 & 0 & 1 \cr
121 & 0 & 0 & 0 & 0 & -1 \cr
112 & 0 & 0 & 0 & 0 & -1 \cr
1111& 0 & 0 & 0 & 0 & 1} 
    \]
    \end{footnotesize}
\end{example}
% NAL: I checked A_4*B_4=I (5x5) in Mathematica.
%  B_4*A_4 is wild and is not I (8x8), as expected.

%%%%%%%%%%%%%%%%%%%%%%%
\subsection{Proof of the Local Identity}
\label{subsec:prove-local1}

In this first application, the local identity~\eqref{eq:gen-local}
takes the following form.

\begin{theorem}\label{thm:local1}
Given $n>0$ and $\la,\mu\in P(n)$, let $G(\la,\mu)$ be the set of 
partitions $\ga$ such that $\dg(\ga)$ can be obtained either by removing
a nonempty horizontal strip from $\dg(\la)$ or by removing
a nonempty special rim-hook from $\dg(\mu)$. Then
\begin{equation}\label{eq:local1}
\sum_{\ga\in G(\la,\mu)} \sgn(\mu/\ga) =\chi(\la=\mu).
\end{equation}
Therefore (by Theorem~\ref{thm:main}) the rectangular Kostka matrix $A_n$
has a right-inverse $B_n$ whose entries count signed SRHT.
\end{theorem}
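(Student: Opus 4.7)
My plan is to establish the local identity~\eqref{eq:local1} by a direct combinatorial argument: handle the diagonal case $\la = \mu$ by a uniqueness argument, and handle the off-diagonal case $\la \neq \mu$ by a sign-reversing involution on $G(\la,\mu)$. Theorem~\ref{thm:main} then immediately delivers $A_nB_n = I_{R(n)}$, and with it the right-inverse interpretation in terms of signed SRHT.

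First, I extract the constraints on any $\ga \in G(\la,\mu)$. Because $\mu/\ga$ is a special rim-hook, it is a connected border strip containing exactly one column-$1$ cell (namely $(\ell(\mu),1)$), so $\ell(\ga) = \ell(\mu)-1$; its sign is $(-1)^{r-1}$, where $r$ is the number of rows it occupies. Because $\la/\ga$ is a horizontal strip, no column contains two cells of $\la/\ga$, which forces $\ga_i \geq \la_{i+1}$ and $\ell(\la) \leq \ell(\mu)$. Sign cancellation will hinge on changing the row count $r$ by $1$.

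For the case $\la = \mu$, the skew shape $\la/\ga$ must simultaneously be a horizontal strip and an SRH. A SRH with $r \geq 2$ rows has a ``turn'' column appearing in two consecutive rows, which violates the horizontal-strip condition; so $\la/\ga$ must be a single row. Starting in column $1$ and leaving a partition then forces this row to be the entire bottom row of $\la$, yielding the unique element $\ga = (\la_1,\ldots,\la_{\ell(\la)-1})$ of $G(\la,\la)$, which carries sign $+1$. This matches the right-hand side of~\eqref{eq:local1}.

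For $\la \neq \mu$, I would construct a fixed-point-free, sign-reversing involution $\phi$ on $G(\la,\mu)$. The guiding idea is to identify a canonical ``pivot'' cell whose toggling in $\ga$ adds or removes a single row of $\mu/\ga$, flipping the sign. Concretely, I expect the pivot to be produced by scanning the rows in a canonical order (e.g.\ from top down) and locating the first row $i$ where $\ga_i$ can be altered by $\pm 1$ while (a) preserving the partition shape of $\ga$, (b) preserving the horizontal-strip condition on $\la/\ga$, and (c) preserving the SRH structure on $\mu/\ga$. The principal obstacle is designing this pivot rule so that all three conditions remain simultaneously satisfied after the toggle and so that the rule selects the same pivot on the input and its image, guaranteeing involutivity; a careful case analysis splitting on whether the pivot row lies above, inside, or below the rows spanned by $\mu/\ga$ should verify these properties. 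Once $\phi$ is established, the contributions cancel in pairs, the sum in~\eqref{eq:local1} vanishes, and the proof is complete.
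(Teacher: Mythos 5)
Your diagonal case $\la=\mu$ is correct and matches the paper's argument: the skew shape must be both a horizontal strip and a special rim-hook, hence a single row, hence the entire bottom row of $\la$, with sign $+1$. The off-diagonal case, however, contains a concrete error and is also not actually carried out. The proposed toggle ``alter $\ga_i$ by $\pm 1$'' cannot be the right mechanism. The at most two partitions in $G(\la,\mu)$ arise from removing consecutive removable special rim-hooks $\sigma_i,\sigma_{i\pm 1}$ from $\dg(\mu)$, and these differ by an entire horizontal segment $R$ (the cells of the rim-hook lying in one row of $\dg(\mu)$), whose length is $\mu_i-\mu_{i+1}+1$, not $1$. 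Changing a single part by $\pm 1$ would almost never preserve the SRH structure on $\mu/\ga$, so the involution as you describe it would be fixed-point-free for the wrong reason: it would typically have an empty domain. Beyond that, you never establish the essential structural fact that $G(\la,\mu)$ has at most two elements when $\la\neq\mu$; this is precisely what Steps 1 and 2 of the paper's proof supply, and without it the involution cannot even be set up.

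A smaller point: your opening constraint ``the SRH contains exactly one column-$1$ cell, so $\ell(\ga)=\ell(\mu)-1$'' is false. A special rim-hook starts in column $1$ and may take several up-steps before moving right, so it can occupy arbitrarily many column-$1$ cells; for instance removing $\{(3,1),(2,1)\}$ from $\mu=(2,1,1)$ leaves $\ga=(2)$ with $\ell(\ga)=\ell(\mu)-2$. This does not affect your $\la=\mu$ argument, but it signals that the geometry of SRH removal is more flexible than your sketch assumes, which is exactly why the $\la\neq\mu$ case needs the careful case analysis on which index $i$ is removable that the paper performs.
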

% NAL: I added this remark (in more concise form) to the earlier section.
%  where we talk about the general setup.
%\begin{remark}
%Each object $\gamma$ in $G(\lambda,\mu)$ can be obtained by first choosing an $L>0$ and simultaneously removing a horizontal strip of size $L$ form $\dg(\lambda)$ or by removing a special rim-hook of size $L$ from $\dg(\mu)$. Doing this for all values for $L$ gives $G(\la,\mu) = \bigcup_{L\geq 1} S(\la,L)\cap T(\mu,L)$. Associating appropriate signed weights to each object shows that the above sum is equivalent to the one presented in Theorem \ref{thm:main}. In all subsequent applications, we will use the concise $G(\la, \mu)$ notation.
%\end{remark}
\begin{proof}
First consider the case $\la=\mu$. The set $G(\la,\mu)$ consists
of the single partition $\ga$ obtained by deleting the last part of $\la$.
This is because the only removable horizontal strip in $\dg(\la)$
that is also a removable special rim-hook of $\dg(\la)$ is the set
of all boxes in the lowest row of $\dg(\la)$. Since this set occupies
one row, $\sgn(\mu/\ga)=+1$ in this case, and~\eqref{eq:local1} holds.

% NAL: Re-typeset the running example to move things out of figures.
Next consider the case $\la\neq\mu$. We accompany the general proof by a 
running example where $\lambda = (6,4,2,1)$ and $\mu = (4,3,3,3)$.
The diagrams are shown here:
\boks{0.3} 
\[\dg(\lambda) = \y{6,4,2,1} \quad \dg(\mu) = \y{4,3,3,3}\]
We prove that $G(\la,\mu)$ is either empty or consists of
exactly two elements where the corresponding special rim-hooks
$\mu/\ga$ have opposite signs. This suffices to prove~\eqref{eq:local1}.
Write $\mu=(\mu_1,\mu_2,\ldots,\mu_s)$. There are exactly $s$
special rim-hooks that can be removed from $\dg(\mu)$ to leave
another partition diagram $\ga$. Namely, for $i=1,2,\ldots,s$,
we can remove the SRH $\sigma_i$ that starts in the lowest box of
column $1$ and moves along the southeast rim of $\dg(\mu)$ until
it ends at the rightmost box in row $i$ of the diagram.
Let $\ga^i$ be the partition obtained by removing $\sigma_i$ from $\dg(\mu)$.
These are the only possible elements of $G(\la,\mu)$.
For our running example, the diagrams of $\ga^i$ are drawn below
using white boxes, with the corresponding SRH $\sigma_i$ depicted in gray.
\[ \dg(\ga^1)=  \y{2,2,2}*[*(lightgray)]{2+2,2+1,2+1,3} \quad 
   \dg(\ga^2)=  \y{4,2,2}*[*(lightgray)]{0,2+1,2+1,3}\quad
   \dg(\ga^3)=  \y{4,3,2}*[*(lightgray)]{0,0,2+1,3}\quad
   \dg(\ga^4)=  \y{4,3,3}*[*(lightgray)]{0,0,0,3} \]

\emph{Step~1.} We show $G(\la,\mu)$ cannot contain both $\ga^i$ and $\ga^j$
when $j>i+1$. For if this happened, consider the rightmost box $c$
in row $i+1$ of $\dg(\mu)$. In our running example, taking $i=2$ and $j=4$,
the box $c$ is marked in black below:
\[\y{4,3,3,3}*[*(black)]{0,0,2+1} \]
Box $c$ and the box just above $c$ both
belong to $\sigma_i$ and so do not belong to $\dg(\ga^i)$. Since
$\la/\ga^i$ is a horizontal strip, $c$ cannot belong to $\dg(\la)$.
On the other hand, because $j>i+1$, $c$ does not belong to $\sigma_j$
and so does belong to $\dg(\ga^j)$, which is contained in $\dg(\la)$.
Thus $c$ does belong to $\dg(\la)$, which gives a contradiction.

\emph{Step~2.} 
Suppose $G(\la,\mu)$ is nonempty, say with $\ga^i\in G(\la,\mu)$.
We prove $G(\la,\mu)=\{\ga^i,\ga^{i+1}\}$ 
      or $G(\la,\mu)=\{\ga^i,\ga^{i-1}\}$.
Let $\eta_i=\la/\ga^i$, which must be a horizontal strip.
We can pass from $\dg(\mu)$ to $\dg(\la)$ by first removing $\sigma_i$ to 
produce $\dg(\ga^i)$, then adding $\eta_i$ to reach $\dg(\la)$.
Let $c$ be the rightmost box in row $i$ of $\dg(\mu)$,
which is in $\sigma_i$ and thus not in $\dg(\ga^i)$.
Let $R$ be the set of boxes of $\sigma_i$ lying in row $i$ of $\dg(\mu)$.
In our running example, we may take $i=2$. The black box in the next
figure is $c$, the rest of $\sigma_i$ is shown in gray, and in this
instance, $R=\{c\}$.
\[\dg(\mu) = \y{4,3,3,3}*[*(lightgray)]{0,0,2+1,3}*[*(black)]{0,2+1} 
\xrightarrow{-\sigma_2} \dg(\ga^2) = \y{4,2,2} \xrightarrow{+\eta_2} 
\dg(\la)=\y{6,4,2,1}*[*(lightgray)]{4+2,3+1,0,1}*[*(black)]{0,2+1} \]

Case~1: Assume $c$ is in $\dg(\la)$. 
Then we must have $c\in\eta_i$, hence $R\subseteq\eta_i\subseteq\dg(\la)$.
\begin{itemize}
\item Case~1a: Assume $i<s$, so $\ga^{i+1}$ is defined. 
In this situation, $\sigma_{i+1}=\sigma_i\smin R$,
$\ga^{i+1}=\ga^i\cup R$, $\dg(\ga^{i+1})\subseteq\dg(\la)$,
and (critically) $\la/\ga^{i+1}=\eta_i\smin R$ is a horizontal strip.
Thus $\ga^{i+1}\in G(\la,\mu)$, and Step~1 applies to show
$G(\la,\mu)=\{\ga^i,\ga^{i+1}\}$.  Note $\sgn(\sigma_{i+1})=-\sgn(\sigma_i)$.
In our running example, the next figure shows how to go from $\mu$ to $\la$
by removing $\sigma_3$ and then adding $\eta_2\setminus R$.
\[\dg(\mu) = \y{4,3,3,3}*[*(lightgray)]{0,0,2+1,3} \xrightarrow{-\sigma_3} 
\dg(\ga^3) = \y{4,3,2} \xrightarrow{+\eta_2\setminus R} 
\dg(\la)=\y{6,4,2,1}*[*(lightgray)]{4+2,3+1,0,1}\]
\item Case~1b: Assume $i=s$; we rule out this case by deducing a contradiction.
Here, $\sigma_s$ and $R$ must both be the set of all cells in the lowest row 
of $\dg(\mu)$. But since $R$ is a subset of $\eta_s$, where $\eta_s$ has 
the same size as $\sigma_s$, we must have $\eta_s=R$. This in turn
forces $\la=\mu$, contradicting the assumption $\la\neq\mu$.
\end{itemize}

Case~2: Assume $c$ is not in $\dg(\la)$.
\begin{itemize}
\item Case~2a: Assume $i>1$, so $\ga^{i-1}$ is defined.
Let $R'$ be the set of cells of $\sigma_{i-1}$ lying in row $i-1$ of $\dg(\mu)$.
By assumption on $c$, the horizontal strip $\eta_i$ 
cannot contain any cell of row $i$
directly below a cell of $R'$ (although $\eta_i$ might contain some cells
in row $i-1$ to the right of $R'$). We now see that
$\sigma_{i-1}=\sigma_i\cup R'$, $\ga^{i-1}=\ga^i\smin R'$,
$\dg(\ga^{i-1})\subseteq\dg(\ga^i)\subseteq\dg(\la)$, 
and (critically) $\la/\ga^{i-1}=\eta_i\cup R'$ is a horizontal strip.
Thus $\ga^{i-1}\in G(\la,\mu)$, and Step~1 applies to show
$G(\la,\mu)=\{\ga^i,\ga^{i-1}\}$.  Note $\sgn(\sigma_{i-1})=-\sgn(\sigma_i)$. 

We illustrate this case with a new example. Let $\mu = (4,3,2)$ and 
$\la = (7,2)$. We label the special rim-hooks and horizontal strips in gray
and label the cell $c = (2,3)$ in black. On one hand, we remove $\si_2$ 
from $\dg(\mu)$ to reach the diagram of $\ga^2=(4,1)$, 
then add $\eta_2$ to $\dg(\ga^2)$ to obtain $\dg(\la)$.  On the other hand, 
we remove $\si_1 = \si_2 \cup R'$ from $\dg(\mu)$ to reach 
the diagram of $\ga^1=(2,1)$, then add $\eta_1$ to $\dg(\ga^1)$ 
to obtain $\dg(\la)$.
\[ \begin{array}{lclcl}
\dg(\mu)= \y{4,3,2}*[*(lightgray)]{0,1+1,2}*[*(black)]{0,2+1} 
& \xrightarrow{-\si_2} 
& \dg(\ga^2) = \y{4,1} 
& \xrightarrow{+\eta_2} 
& \dg(\la)=\y{4,1}*[*(lightgray)]{4+3,1+1} \\[15pt]
\dg(\mu) = \y{4,3,2}*[*(lightgray)]{2+2,1+1,2}*[*(black)]{0,2+1} 
& \xrightarrow{-\si_1} 
& \dg(\ga^2) = \y{2,1} 
& \xrightarrow{+\eta_1} 
& \dg(\la)=\y{2,1}*[*(lightgray)]{2+5,1+1}
\end{array} \]
%aknew: added example above, maybe the labelings could be fixed.
%NAL: I checked, and changed labels slightly (e.g. \si_1 instead of \si^1).

\item Case~2b: Assume $i=1$; we rule out this case by deducing a contradiction.
The special rim-hook $\sigma_1$ starts in column $1$ and ends at cell $c$
in row $1$ and column $\mu_1$, so $\sigma_1$ contains at least $\mu_1$ boxes.
So the horizontal strip $\eta_1$, which has the same size as $\sigma_1$,
must have at least $\mu_1$ boxes, all of which occupy at least $\mu_1$ columns. Since $\eta_1$ is added to 
$\dg(\ga^1)\subseteq\dg(\mu)$, cell $c$ must belong to 
$\eta_1\subseteq\dg(\la)$, contrary to the assumption of Case~2. \qedhere
\end{itemize}
\end{proof}

\begin{remark}
Bender and Knuth~\cite{bender-knuth} gave a bijective proof that
the rectangular Kostka matrix satisfies the sorting condition of
Remark~\ref{rem:sort-mat}; see also~\cite[Thm. 9.27]{loehr-comb}.
By Remark~\ref{rem:sort-mat}, we can deduce E\u{g}ecio\u{g}lu and Remmel's 
inversion result~\cite[Thm. 1]{inv-kostka} for the square $P(n)\times P(n)$ 
Kostka matrix from our rectangular version. For $S\in\srht(\mu,\be)$, 
those authors refer to $\nu=\sort(\be)$ as the \emph{type} of $S$.
Their square inverse Kostka matrix has entries
$K^{-1}(\nu,\mu)=\sum_{\be:\,\sort(\be)=\nu}\sum_{S\in\srht(\mu,\be)} \sgn(S)$,
in accordance with~\eqref{eq:sort-inv}.
\end{remark}

\begin{remark}
By the proof of Theorem~\ref{thm:local1}, there is a bijection
$\ga^i\mapsto \ga^{i\pm 1}$
that matches the two oppositely-signed objects contributing to the sum,
in the case where $\la\neq\mu$ and the sum is not already empty.
Since there is only one positive object and one negative object,
this is a \emph{canonical} bijection (not relying on arbitrary choices).
We build this up to a canonical bijective proof of $A_nB_n=I_{P(n)}$
in Section~\ref{subsec:invert-Kostka}.
\end{remark}

%%%%%%%%%%%%%%%%%%%%%%%%%%%%%%%%%%%%%%%%%%%%%%%%%%%%%%%%%%%%%%%%%%%%%%%%%%%%%
\section{Application 2: Rim-Hook Tableaux}
\label{sec:RHT}

As the second application of the general theory, we prove an inversion
result for a rectangular version of the transition matrices between
power-sum symmetric functions and Schur functions. 
In this case, the local identities~\eqref{eq:gen-local} can be informally 
summarized by the slogan: ``A local inverse of a signed rim-hook is a 
rescaled signed rim-hook.'' 

%%%%%%%%%%%%%%%%%%%%%%%%%%%%%%
\subsection{Rim-Hook Tableaux}
\label{subsec:RHT}

A \emph{rim-hook} of size $L$ is a set of $L$ unit boxes that can be
traversed by starting at some box and successively moving one step to the right or one step up
to go from one box to the next. The \emph{sign}
of a rim-hook $\sigma$ occupying $r$ rows is $\sgn(\sigma)=(-1)^{r-1}$.
Given $\la\in P(n)$ and $\be\in C(n)$, a \emph{rim-hook tableau (RHT)}
of \emph{shape} $\la$ and \emph{content} $\be$ is a filling $S$ of
the diagram of $\la$ such that for all $k$, the cells of $S$ containing $k$
form a rim-hook of size $\be_k$, and the cells of $S$ containing $1,2,\ldots,k$
form a partition diagram.  Let $\rht(\la,\be)$ be the set of all such RHT.
The \emph{sign} of an RHT $S$ is the product of the signs of all rim-hooks 
in $S$. In this second application of the general setup
of~\S\ref{subsec:setup}, we let $A_n$ be the $P(n)\times C(n)$ matrix
with entries $A_n(\la,\be)=\sum_{S\in\rht(\la,\be)} \sgn(S)$.
The symmetric function literature often uses
$\chi^{\la}_{\be}$ to denote $A_n(\la,\be)$.
\boks{0.4}
\begin{example}
Let $\lambda = (4,3,3,1)$ and $\beta = (3,4,4)$. 
We find $A(\lambda, \beta) = -2$ based on the two RHT shown below,
which both have sign $-1$.
% NAL: It's ok to omit the detailed sign computation.
%due to the following two objects. The one on the left has sign $(-1)^{2-1}(-1)^{3-1}(-1)^{3-1} = -1$ and the one on the right has sign $(-1)^{2-1}(-1)^{2-1}(-1)^{2-1} = -1$.
\[ \yt{1133,123,223,2} \qquad \yt{1122,122,333,3} \]
\end{example}

Let $U\in\rht(\la,\be)$ where $\be=(\be_1,\ldots,\be_s)=(\be^*,L(\be))$.
Removing the rim-hook $\sigma$ in $U$ of size $L(\be)$ containing the
value $s=\ell(\be)$ leaves a smaller RHT $U^*$ of content $\be^*$ and
some shape $\ga$, by definition. Moreover,
$\sgn(U)=\sgn(U^*)\sgn(\sigma)$. Let $S(\la,L)$ be the set of 
partitions $\ga\in P(n-L)$ such that $\dg(\ga)\subseteq\dg(\la)$
and $\la/\ga=\dg(\la)\smin\dg(\ga)$ is a rim-hook of size $L$. Removing
the last rim-hook gives a bijection $G:\rht(\la,\be)\rightarrow
\bigcup_{\ga\in S(\la,L(\be))} \rht(\ga,\be^*)$. Taking signs into
account, we deduce the recursion
\begin{equation}\label{eq:A-rec2}
 A(\la,\be)=\sum_{\ga\in S(\la,L(\be))} \sgn(\la/\ga)A(\ga,\be^*),
\end{equation}
which is an instance of~\eqref{eq:A-rec} with $\wt_A(\la,\ga)=\sgn(\la/\ga)$.

%%%%%%%%%%%%%%%%%%%%%%%%%%%%%%
\subsection{The Scaling Factors $Z_{\be}$.}
\label{subsec:scale_Z_beta}

Given a composition $\be=(\be_1,\be_2,\ldots,\be_s)\in C(n)$, 
define the integer 
\begin{equation}\label{eq:Zbe}
 Z_{\be}=\be_1(\be_1+\be_2)(\be_1+\be_2+\be_3)\cdots
  (\be_1+\be_2+\cdots+\be_s).
\end{equation}
For instance, $Z_{(3,4,4)} = 3\cdot 7 \cdot 11 = 231$ 
but $Z_{(4,3,4)} = 4\cdot 7 \cdot 11 = 308$. 
Since the last factor is $|\be|=n$, we have $Z_{\be}=n\cdot Z_{\be^*}$.
Let $B_n$ be the $C(n)\times P(n)$ matrix defined by
$B_n(\be,\mu)=\sum_{S\in\rht(\mu,\be)} Z_{\be}^{-1}\sgn(S)$
for $\be\in C(n)$ and $\mu\in P(n)$. Observe that $B_n$ is 
obtained from $A_n$ by rescaling all entries in each column $\be$ 
by $Z_{\be}^{-1}$ and then transposing the matrix.

Define $T(\mu,L)$ to be the set of partitions $\de\in P(n-L)$ such that 
$\dg(\de)\subseteq\dg(\mu)$ and $\mu/\de=\dg(\mu)\smin\dg(\de)$ 
is a rim-hook of size $L$ (so $T(\mu,L)$ is the same as $S(\mu,L)$
in this application).  Removing the last rim-hook $\sigma$ of 
$U\in\rht(\mu,\be)$ produces $U^*\in\rht(\de,\be^*)$ for a unique 
$\de\in T(\mu,L)$.
Note that $Z_{\be}^{-1}\sgn(U)=n^{-1}Z_{\be^*}^{-1}\sgn(U^*)\sgn(\sigma)$
where $n=|\be|=|\mu|$ and $\sigma=\mu/\de$. So we get the recursion
\begin{equation}\label{eq:B-rec2}
 B(\be,\mu)=\sum_{\de\in T(\mu,L(\be))} |\mu|^{-1}\sgn(\mu/\de)B(\be^*,\de),
\end{equation}
which is an instance of~\eqref{eq:B-rec} with 
$\wt_B(\mu,\de)=|\mu|^{-1}\sgn(\mu/\de)$.

\begin{example}
For $n = 4$, the rectangular matrices are as follows:
\begin{footnotesize}
\[ % CHECKED by NAL.
A_4:\ \bbmatrix{~ & 4 & 31 & 22 & 211 & 13 & 121 & 112 & 1111\cr
 4 & 1 & 1 & 1 & 1 & 1 & 1 & 1 & 1 \cr
31 & -1 & 0 & -1 & 1 & 0 & 1 & 1 & 3 \cr
22 & 0 & -1 & 2 & 0 & -1 & 0 & 0 & 2 \cr
211 & 1 & 0 & -1 & -1 & 0 & -1 & -1 & 3 \cr
1111 & -1 & 1 & 1 & -1 & 1 & -1 & -1 & 1 } \quad
B_4:\ \bbmatrix{~ & 4 & 31 & 22 & 211 & 1111\cr
4 & 1/4 & -1/4 & 0 & 1/4 & -1/4 \cr
31& 1/12 & 0 & -1/12 & 0 & 1/12 \cr
22& 1/8 & -1/8 & 2/8 & -1/8 & 1/8 \cr
211 & 1/24 & 1/24 & 0 & -1/24 & -1/24 \cr
13 & 1/4 & 0 & -1/4 & 0 & 1/4 \cr
121 & 1/12 & 1/12 & 0 & -1/12 & -1/12 \cr
112 & 1/8 & 1/8 & 0 & -1/8 & -1/8 \cr
1111& 1/24 & 3/24 & 2/24 & 3/24 & 1/24 }
\]
\end{footnotesize}
\end{example}

%%%%%%%%%%%%%%%%%%%%%%%
\subsection{Abacus Notation for Partitions} 
\label{subsec:abacus-def}
% Maybe move to background section?
% No, since we only use it in this application.

It is often useful to represent integer partitions by abaci,
especially when performing operations that affect the southeast rim
of the partition diagram. Given $\la\in P(n)$ and any $N\geq\ell(\la)$,
the \emph{$N$-bead abacus representing $\la$} is the binary
word $\ab^N(\la)=w_1w_2w_3\cdots$ defined as follows. The word begins
with $N-\ell(\la)$ copies of $1$ (each $1$ represents a bead on an
abacus). To continue building the word, move along the southeast rim
of $\dg(\la)$, from the southwest corner to the northeast corner,
by a succession of unit-length east steps and north steps. Record a $0$
(gap) in the abacus word for each east step, and record a $1$ (bead)
in the abacus word for each north step. The word terminates with
an infinite sequence of $0$s (gaps). The value of $N$ is usually irrelevant
as long as it is large enough to accommodate any operations performed
on the partition and its associated abacus.

For instance, consider the operation of removing a rim-hook of size $L$
from the rim of $\dg(\la)$. It is not hard to check that this operation
corresponds to making a bead ``jump'' from some position $i$ on $\ab^N(\la)$
to a position $i-L$ that contains a gap. In other words, removal of the
rim-hook $\sigma$ of size $L$ modifies the abacus by changing some $w_i$ from 
$1$ to $0$ and $w_{i-L}$ from $0$ to $1$. Furthermore, if there are $b$ beads
in positions $i-L+1,\ldots,i-1$ on $\ab^N(\la)$, then $\sgn(\sigma)=(-1)^b$.
(For a more detailed verification, see~\cite[Sec. 3.3]{loehr-abacus}
 or~\cite[Thm. 10.13]{loehr-comb}.)
On the other hand, adding a rim-hook of size $L$ to $\dg(\la)$ is
accomplished on the abacus by making a bead jump from position $i$
to some position $i+L$ that contains a gap. Here, the number of beads
on the abacus must satisfy $N\geq \ell(\la)+L$ to ensure that there 
are enough beads to accommodate all possible rim-hooks that might be added.

\begin{example}
Starting with the partition $\la=(4,3,3,2,2,1)$, we can remove a rim-hook
of size $L=5$ to obtain $\mu=(4,2,1,1,1,1)$ as shown in the following
partition diagrams:
\boks{0.3}
\[ \dg(\la)=\y{4,3,3,2,2,1}*[*(lightgray)]{0,2+1,1+2,1+1,1+1} \to\quad
   \dg(\mu)=\y{4,2,1,1,1,1} \]
The corresponding operation on the $9$-bead abaci representing $\mu$
and $\nu$ is shown here:

\begin{center}
\begin{tikzpicture}
    \node (formula) [] 
{$\ab^9(\la)=11101\underline{0}1101\underline{1}01000\ldots \quad\to\quad
  \ab^9(\mu)=11101\underline{1}1101\underline{0}01000\ldots $};
    \draw[-latex,black] ($(formula.north west)+(+3.7,-0.2)$) arc
    [
        start angle=20,
        end angle=160,
        x radius=0.5cm,
        y radius =0.5cm
    ] ;
\end{tikzpicture}
\end{center}
The sign associated with this rim-hook removal is $(-1)^3 = -1$.
\end{example}

%%%%%%%%%%%%%%%%%%%%%%%
\subsection{Proof of the Local Identity}
\label{subsec:prove-local2}

In this second application, the local identity~\eqref{eq:gen-local}
takes the following form.

\begin{theorem}\label{thm:local2}
Given $n>0$ and $\la,\mu\in P(n)$, let $G(\la,\mu)$ be the set of 
partitions $\ga$ that can be obtained either by removing
a nonempty rim-hook from $\la$ or by removing
a nonempty rim-hook from $\mu$. Then
\begin{equation}\label{eq:local2}
\sum_{\ga\in G(\la,\mu)} n^{-1}\sgn(\la/\ga)\sgn(\mu/\ga) =\chi(\la=\mu).
\end{equation}
Therefore (by Theorem~\ref{thm:main}) the matrix $A_n$ whose entries count
signed RHT has a right-inverse $B_n$, a rescaled version of the transpose 
of $A_n$.
\end{theorem}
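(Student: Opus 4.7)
The plan is to work entirely in the abacus picture of Section~\ref{subsec:abacus-def}. Fix $N$ large enough that both $\la$ and $\mu$ admit $N$-bead abaci, and let $B_\la, B_\mu \subseteq \Z_{\ge 0}$ denote their bead-position sets. A rim-hook removal of size $L$ from a partition $\nu$ corresponds to a bead at some position $i$ of $\ab^N(\nu)$ jumping left onto the gap at $i-L$, with sign $(-1)^b$, where $b$ counts the beads strictly between $i-L$ and $i$. Throughout, the conservation law $\sum_{p \in B_\la} p = \sum_{p \in B_\mu} p = n + \binom{N}{2}$ will drive the case analysis.

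For the diagonal case $\la = \mu$, every $\ga \in G(\la,\la)$ arises by performing a single bead-jump on $\ab^N(\la)$ and immediately undoing it; both rim-hook signs are then equal, so each term contributes $+n^{-1}$. A standard abacus identity says that the number of pairs (bead at $i$, gap at $j$ with $j<i$) equals $|\la|=n$ (each such pair encodes a hook of $\dg(\la)$), so the sum collapses to $n \cdot n^{-1} = 1$.

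Now suppose $\la \ne \mu$. If $\ga \in G(\la,\mu)$ arises from bead moves $i \mapsto i-L$ on $\ab^N(\la)$ and $j \mapsto j-L$ on $\ab^N(\mu)$, then comparing the two descriptions of $\ab^N(\ga)$ forces $B_\la \triangle B_\mu = \{i,\,i-L,\,j,\,j-L\}$. Hence when $|B_\la \triangle B_\mu| \ne 4$ we have $G(\la,\mu) = \varnothing$ and the sum vanishes trivially. Otherwise, write $B_\la \smin B_\mu = \{a,b\}$ with $a>b$ and $B_\mu \smin B_\la = \{c,d\}$ with $c>d$; the sum-conservation relation $a+b=c+d$ together with disjointness forces the interleaving to be either $a>c>d>b$ or $c>a>b>d$. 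A short case analysis shows that $G(\la,\mu)=\{\ga_1,\ga_2\}$, with each $\ga_k$ coming from one of the two valid ways to partition $\{a,b,c,d\}$ into two pairs so that each pair is a (bead, gap) pair of the required form on one of the two abaci.

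The main technical obstacle is to verify that $\sgn(\la/\ga_1)\sgn(\mu/\ga_1) = -\sgn(\la/\ga_2)\sgn(\mu/\ga_2)$, which delivers the cancellation. Applying the abacus sign rule four times, each of the four signs takes the form $(-1)^{|B \cap I|}$, where $B := B_\la \cap B_\mu$ and $I$ is an open integer interval with endpoints in $\{a,b,c,d\}$. The four intervals decompose into common sub-intervals whose contributions pair off and vanish modulo $2$, leaving exactly one residual $+1$ coming from an element of $B_\la \smin B_\mu$ or $B_\mu \smin B_\la$ that lies strictly between two of the other distinguished positions. That single unpaired contribution flips the relative parity of the two terms, producing the desired cancellation. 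Once this parity bookkeeping is carried out, Theorem~\ref{thm:main} immediately converts the local identity into the matrix inversion $A_nB_n = I_{P(n)}$.
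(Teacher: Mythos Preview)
Your argument is correct and, at its core, is the same abacus-based proof the paper gives: in both versions the diagonal case reduces to counting the $n$ removable rim-hooks of $\la$, and the off-diagonal case shows $|G(\la,\mu)|\in\{0,2\}$ with the two contributions having opposite sign.

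The packaging differs in a way worth noting. For $\la=\mu$, the paper exhibits an explicit bijection from cells of $\dg(\la)$ to border rim-hooks (project each cell south and east to the rim), whereas you quote the equivalent abacus fact that bead--gap inversions count $|\la|$. For $\la\neq\mu$, the paper parametrizes by the two-step process ``jump a bead of $\ab^N(\la)$ left by $L$, then jump a bead right by $L$ to reach $\ab^N(\mu)$'' and runs through the possible orderings of $i,j,i-L,j+L$ case by case. You instead work directly with the symmetric difference $B_\la\triangle B_\mu$ and use the bead-sum conservation $\sum_{p\in B_\la}p=\sum_{p\in B_\mu}p$ to force $|B_\la\triangle B_\mu|=4$ with one of two interleaving patterns; this is a tidier way to see why exactly two $\ga$'s arise and is not made explicit in the paper. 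One small point: your sentence ``each of the four signs takes the form $(-1)^{|B\cap I|}$ with $B=B_\la\cap B_\mu$'' is not literally correct, since the abacus sign counts beads of $B_\la$ (resp.\ $B_\mu$), not of $B_\la\cap B_\mu$; the extra contributions from positions in $\{a,b,c,d\}$ are exactly what produce your ``residual $+1$,'' so the computation is right but the phrasing should be tightened.
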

\begin{proof}
First consider the case $\la=\mu$. The left side of~\eqref{eq:local2}
becomes $\sum_{\ga\in G(\la,\la)} n^{-1}\sgn(\la/\ga)^2=|G(\la,\la)|/n$,
so it suffices to show that $|G(\la,\la)|=n=|\la|$. This amounts to showing
that there are exactly $n$ nonempty rim-hooks that can be removed from
the border of $\dg(\la)$. We give a bijection from $\dg(\la)$ 
(a set of $n$ unit boxes) onto this set of rim-hooks.
Given a box $c\in\dg(\la)$, move down (due south) from $c$ to reach
a box $c'$ on the southern border of $\dg(\la)$, and move right (due east)
from $c$ to reach a box $c''$ on the eastern border of $\dg(\la)$.
The bijection maps $c$ to the rim-hook that moves northeast along the
border starting at $c'$ and ending at $c''$. The inverse bijection takes
a rim-hook on the border of $\dg(\la)$, say going northeast from
a box $c_1$ to a box $c_2$, and maps this rim-hook to the box of $\dg(\la)$
in the same row as $c_2$ and the same column as $c_1$. In the diagram 
of $\la=(5,5,4,4,3)$ shown below, we label in gray the rim-hook 
corresponding to the black cell $c$.

\[ \y{5,5,4,4,3}*[*(black)]{0,1+1}*[*(lightgray)]{0,3+2, 3+1, 2+2,1+2} \]

To handle the case $\la\neq\mu$, it suffices to prove
\[ \sum_{\ga\in G(\la,\mu)} \sgn(\la/\ga)\sgn(\mu/\ga)=0. \]
We do this by showing that $G(\la,\mu)$ is either empty or contains
exactly two objects contributing oppositely-signed terms to the sum.

Consider the $N$-bead abaci $\ab^N(\la)$ and $\ab^N(\mu)$,
 where $N=\max(\ell(\la),\ell(\mu))$.
Assuming $G(\la,\mu)\neq\emptyset$, there must be a way go from
$\dg(\la)$ to $\dg(\mu)$ by removing some rim-hook of size $L$
from $\dg(\la)$ to get $\dg(\ga)$ for some $\ga\in G(\la,\mu)$, then
adding some different rim-hook of size $L$ to $\dg(\ga)$ to get $\dg(\mu)$.
Translating to abaci, there must be a way to go from
$\ab^N(\la)$ to $\ab^N(\mu)$ by the following two-step process.
First, a bead in $\ab^N(\la)$ jumps from some position $i$ to
a gap in some position $i-L$, producing $\ab^N(\ga)$ for some $\ga$.
Second, a different bead in $\ab^N(\ga)$ jumps from some position $j\neq i$ 
to a gap in position $j+L$, producing $\ab^N(\mu)$.
We must have $j\neq i-L$ since $\la\neq\mu$.

The key observation is that there is exactly one other way to
execute this two-step process (with new choices of $i,j,L,\ga$)
to convert $\ab^N(\la)$ to $\ab^N(\mu)$. In more detail, first note that:
\begin{itemize}
\item positions $i$ and $j$ contain beads in $\ab^N(\la)$ 
and gaps in $\ab^N(\mu)$; 
\item positions $i-L$ and $j+L$ contain gaps in $\ab^N(\la)$ 
and beads in $\ab^N(\mu)$;
\item each other position contains the same thing (bead or gap)
 in $\ab^N(\la)$ and in $\ab^N(\mu)$.
\end{itemize}

Consider the case $i<j$, so $i-L<i<j<j+L$.  
The only other way to go from $\ab^N(\la)$
to $\ab^N(\mu)$ is to move a bead from position $j$ to position $i-L$
(jumping down by $L'=j-i+L>0$ positions) and then move a bead from
position $i$ to position $j+L$ (jumping up $L'$ positions). 
To compare the signs of the two ways, let $\ab^N(\la)$ have
$a$ beads strictly between positions $i-L$ and $i$,
$b$ beads strictly between positions $i$ and $j$, and
$c$ beads strictly between positions $j$ and $j+L$.
For the first bead motion, 
the bead jump from $i$ to $i-L$ contributes $(-1)^a$, and
the bead jump from $j$ to $j+L$ contributes $(-1)^c$.
For the second bead motion,
the bead jump from $j$ to $i-L$ contributes $(-1)^{a+b+1}$
(noting that this bead jumps over a bead at position $i$),
while the bead jump from $i$ to $j+L$ contributes $(-1)^{b+c}$
(noting there is no longer a bead at position $j$).
Since $(-1)^{a+2b+c+1}=-(-1)^{a+c}$, the two terms have opposite signs. 

Consider the case $j<i$ and $i-j<L$. Then $i-L<j<i<j+L$.
As in the previous case, the other way to go from $\ab^N(\la)$
to $\ab^N(\mu)$ is to move the bead in position $j$ to position $i-L$,
then move the bead in position $i$ to position $j+L$. A sign analysis
(similar to the first case) shows that the two ways lead to terms
with opposite signs.

Consider the case $j<i$ and $i-j\geq L$.
We cannot have $i-j=L$ since $j\neq i-L$ as noted earlier.
We cannot have $i-L=j+L$ since otherwise we could not move
a bead from $j$ to $j+L$ on $\ab^N(\ga)$. 
The possible orderings in this case are therefore
$j<j+L<i-L<i$ or $j<i-L<j+L<i$.
In both orderings, the second way to go from $\ab^N(\la)$
to $\ab^N(\mu)$ first moves a bead from $i$ to $j+L$,
then moves a bead from $j$ to $i-L$. As before, we can check
that the two ways lead to terms of opposite signs.
\end{proof}

\begin{remark}
When $\la\neq\mu$ and the sum on the left side of~\eqref{eq:local2}
is not vacuous, the proof just given shows there is exactly one
positive object and one negative object contributing to this sum.
Thus we have a \emph{canonical} bijection (not relying on arbitrary
choices) proving this identity. The bijection when $\la=\mu$,
while not canonical in this technical sense, is still quite natural.
We build this up to an ``almost canonical'' bijective proof of 
$A_n(n!B_n)=n!I_{P(n)}$ in Section~\ref{subsec:appl2-bij}.
\end{remark}

\begin{example}\label{ex:local2}
Let $\lambda = (9,8,6,6,5,4,4,2)$ and $\mu = (9,9,9,7,5,3,1,1)$, so $N=8$.
The first way to go from $\ab^N(\lambda)$ to $\ab^N(\mu)$ takes $L=5$ and moves 
a bead from position $i = 6$ to $i - L = 1$, then moves a bead from 
position $j = 10$ to $j+L = 15$, as shown by the upper arrows in the diagram
below. The second way takes $L= 10 - 6 + 5 = 9$ instead and moves a bead from position $10$ 
to $1$, then moves a bead from position $6$ to $15$, as shown by the lower
arrows in the diagram. The signs are $(-1)^{2+2} =+1$ for the first way
and $(-1)^{4 + 3} = -1$ for the second way.
\begin{center}
\begin{tikzpicture}
    \node (formula) [] {$\begin{tabular}{*{20}{c}}
  0& 1 & 2 & 3 & 4 & 5 & 6 & 7 & 8 & 9 & 10 & 11 & 12 & 13 & 14 & 15 & 16 & 17 & 18 & 19 
\\ & &  &  &  &  &  &  &  &  &  &  &  &  &  &  &  &  &  &  \\
 0 & \underline{0} & 1 & 0 & 0 & 1 & \underline{1} & 0 & 1 & 0 & 
\underline{1} & 1 & 0 & 0 & 1 & \underline{0} & 1 & 0 & 0 & 0
\end{tabular} $};
    \draw[-latex,black] ($(formula.north west)+(+3.6,-1.0)$) arc
    [
        start angle=20,
        end angle=160,
        x radius=1.36cm,
        y radius =0.5cm
    ] ;
        \draw[-latex,black] ($(formula.north west)+(5.95,-1.0)$) arc
    [
        start angle=160,
        end angle=20,
        x radius=1.9cm,
        y radius =0.5cm
    ] ;
            \draw[-latex,red] ($(formula.north west)+(3.6,-1.45)$) arc
    [
        start angle=200,
        end angle=340,
        x radius=3.2cm,
        y radius =0.5cm
    ] ;
    \draw[-latex,red] ($(formula.north west)+(5.95,-1.45)$) arc
    [
        start angle=340,
        end angle=200,
        x radius=2.7cm,
        y radius =0.5cm
    ] ;
\end{tikzpicture}
\end{center}
Translating back to partition diagrams, let
$\ga=(9,8,6,6,5,3,1,1)$ and $\tilde{\ga}=(9,8,6,4,3,3,1,1)$.
The first way to go from $\la$ to $\mu$ removes a rim-hook
$R=\la/\ga$ from $\dg(\la)$ to reach $\dg(\ga)$, then
adds a rim-hook $S=\mu/\ga$ to $\dg(\ga)$ to reach $\dg(\mu)$, as shown here:
\boks{0.23}
\[ \dg(\lambda)  = \y{9,8,6,6,5,4,4,2}*[*(lightgray)]{0,0,0,0,0,3+1,1+3,1+1} 
\quad\to\quad \dg(\gamma) = \y{9,8,6,6,5,3,1,1} 
\quad\to\quad \dg(\mu)= \y{9,8,6,6,5,3,1,1}*[*(lightgray)]{0,8+1,6+3,6+1}\] 
The second way to go from $\la$ to $\mu$ removes a rim-hook
$\tilde{R}=\la/\tilde{\ga}$ from $\dg(\la)$ to reach $\dg(\tilde{\ga})$, then
adds a rim-hook $\tilde{S}=\mu/\tilde{\ga}$ to $\dg(\tilde{\ga})$ 
to reach $\dg(\mu)$, as shown here:
\[ \dg(\lambda)  = \y{9,8,6,6,5,4,4,2}*[*(lightgray)]{0,0,0,0,0,3+1,1+3,1+1}*[*(gray)]{0,0,0,4+2,3+2} 
\quad\to\quad \dg(\tilde{\ga})= \y{9,8,6,4,3,3,1,1} 
\quad\to\quad \dg(\mu)= 
\y{9,8,6,6,5,3,1,1}*[*(lightgray)]{0,8+1,6+3,6+1}*[*(gray)]{0,0,0,4+2,3+2} \] 
On one hand, $\dg(\ga)=\dg(\la)\cap\dg(\mu)$, and $R$ and $S$ are disjoint
rim-hooks that can be added outside the rim of $\dg(\gamma)$ to reach
$\dg(\la)$ or $\dg(\mu)$, respectively. On the other hand,
we get the rim-hook $\tilde{R}$ by adding to $R$ the cells on the inside
border of $\dg(\ga)$ that lead from the northeastern-most cell of $R$
to the southwestern-most cell of $S$ (these cells are shaded dark gray above).
Similarly, $\tilde{S}$ consists of $S$ along with these same cells,
while $\dg(\tilde{\ga})$ is $\dg(\ga)$ with these cells removed.
By translating the cases in the abacus-based proof back to partition
diagrams, one may check that these descriptions of $\ga$, $R$, $S$, 
$\tilde{\ga}$, $\tilde{R}$, and $\tilde{S}$ are valid 
whenever $G(\la,\mu)=\{\ga,\tilde{\ga}\}$ is nonempty.
\end{example}

\begin{remark}
Stanton and White~\cite[Thm. 4]{stanton-white} gave a bijective
proof that our matrix $A_n$ (counting signed RHT) satisfies the
sorting condition of Remark~\ref{rem:sort-mat}. This can also
be deduced algebraically from the observation that when $\sort(\al)
=\sort(\be)$, the power-sum symmetric functions $p_{\al}$ and $p_{\be}$
are equal and have the same Schur expansion. By Remark~\ref{rem:sort-mat},
the restriction of $A_n$ to a square $P(n)\times P(n)$ matrix has
inverse $B'_n$ with entries
\begin{equation}\label{eq:sq-inv2}
 B'_n(\la,\mu)
    =\sum_{\substack{\be\in C(n):\\ \sort(\be)=\la}} B_n(\be,\mu)
    =\sum_{\substack{\be\in C(n):\\ \sort(\be)=\la}} A_n(\mu,\be)/Z_{\be}
    =A_n(\mu,\la)\sum_{\substack{\be\in C(n):\\ \sort(\be)=\la}} Z_{\be}^{-1}.
\end{equation}
This classical result is usually stated as follows.
For a partition $\la$ containing $m_k$ copies of $k$ for each $k$,
define the integer $z_{\la}=\prod_{k\geq 1} m_k!k^{m_k}$.
Then the square matrix $A$ with $\la,\mu$-entry $\chi^{\la}_{\mu}$
has inverse with $\la,\mu$-entry $\chi^{\mu}_{\la}/z_{\la}$. %CHECKED. 
$A$ is known to be the character table for the symmetric group $S_n$,
and this result is a special case of the orthogonality of irreducible
characters of a finite group. Part~(c) of Lemma~\ref{lem:harmonic} (below)
shows why~\eqref{eq:sq-inv2} agrees with the classical formulation.
\end{remark}

To state Lemma~\ref{lem:harmonic}, we need the following definitions.
Any $\si\in S_n$ can be written as a product of disjoint cycles.
Define the \emph{cycle partition} $\cycP(\si)\in P(n)$ 
to be the list of the lengths of these cycles (including $1$-cycles)
written in weakly decreasing order. Most permutations have several
different cycle notations, obtained by reordering cycles or starting
each cycle at a different position. Define the \emph{canonical cycle 
notation} for $\si$ by requiring that each cycle start at its minimum 
element and writing cycles with their minimum elements in decreasing order.

Define the \emph{cycle composition} $\cycC(\si)\in C(n)$ to be
the list of lengths of the cycles in this canonical cycle notation
for $\si$. For example, $\si=(5,2,1)(6,4,7)(9,3)(8)\in S_9$ has
canonical cycle notation $(8)(4,7,6)(3,9)(1,5,2)$, $\cycP(\si)=(3,3,2,1)$,
and $\cycC(\si)=(1,3,2,3)$.

\begin{lemma}\label{lem:harmonic}
(a)~For all $\la\in P(n)$, $n!/z_{\la}$ is the number of $\si\in S_n$
 with $\cycP(\si)=\la$.
\\ (b)~For all $\be\in C(n)$, $n!/Z_{\be}$ is the number of $\si\in S_n$
 with $\cycC(\si)=\be$.
\\ (c) For all $\la\in P(n)$, 
%\begin{equation}\label{eq:Z-vs-z}
$\displaystyle{\sum_{\substack{\be\in C(n):\\ \sort(\be)=\la}} 
Z_{\be}^{-1}=z_{\la}^{-1}}$.
%\end{equation}
In other words, $z_{\la}$ is the harmonic mean of the $Z_{\be}$ as $\be$
ranges over all rearrangements of $\la$.
\end{lemma}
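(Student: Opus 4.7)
The plan is to establish parts (a), (b), (c) in that order, with (c) following combinatorially from the first two. For part (a), I would use the classical ``template'' counting argument. A labeled cycle template of shape $\la$ is an ordered sequence of cycles of lengths $\la_1, \ldots, \la_s$ using each element of $[n]$ exactly once; specifying the elements of $[n]$ in order yields $n!$ such templates. Each $\si \in S_n$ with $\cycP(\si) = \la$ arises from exactly $z_{\la} = \prod_{k \geq 1} m_k! \, k^{m_k}$ templates, the factor $k^{m_k}$ accounting for cyclic rotations within the $m_k$ cycles of length $k$ and $m_k!$ for reorderings of those equal-length cycles. Dividing gives $n!/z_{\la}$.

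For part (b), I would induct on $s = \ell(\be)$. The base case $s = 1$ forces $\be = (n)$: the unique cycle must be an $n$-cycle starting with its minimum $1$, so the count is $(n-1)! = n!/n = n!/Z_{(n)}$. For $s > 1$, write $\be = (\be^*, L)$ with $\be^* = (\be_1, \ldots, \be_{s-1})$ and $L = \be_s$. In the canonical cycle notation of any $\si$ with $\cycC(\si) = \be$, the last cycle has the smallest cycle minimum; since $1$ is the minimum of whichever cycle contains it, $1$ must occupy the initial slot of this last cycle. Filling the remaining $L-1$ slots with an ordered tuple from $\{2, \ldots, n\}$ gives $(n-1)(n-2) \cdots (n-L+1)$ choices. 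The remaining $n-L$ elements then form a permutation on a set of size $n-L$ whose canonical cycle composition is $\be^*$; by induction (applied after relabeling, which is legitimate because the count depends only on the size of the underlying set), there are $(n-L)!/Z_{\be^*}$ such permutations. Multiplying and using $Z_{\be} = n \cdot Z_{\be^*}$ gives $(n-1)!/Z_{\be^*} = n!/Z_{\be}$, as desired.

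Part (c) follows formally from (a) and (b). Every $\si \in S_n$ has a unique $\cycC(\si) \in C(n)$ with $\sort(\cycC(\si)) = \cycP(\si)$, so $\{\si \in S_n : \cycP(\si) = \la\}$ partitions as $\bigsqcup_{\be:\, \sort(\be) = \la} \{\si : \cycC(\si) = \be\}$. Applying (a) on the left and (b) termwise on the right yields $n!/z_{\la} = \sum_{\be:\, \sort(\be) = \la} n!/Z_{\be}$; dividing by $n!$ gives (c). The main obstacle is the inductive step of (b), specifically the observation that $1$ must lie in the \emph{last} cycle of the canonical form; once this is noted, the recursion dovetails with the factorization $Z_{\be} = n \cdot Z_{\be^*}$ already recorded after \eqref{eq:Zbe} and goes through cleanly.
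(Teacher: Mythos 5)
Your proof is correct and takes essentially the same approach as the paper: both hinge on the observation that $1$ must lie in the last cycle of the canonical notation, peel off cycles from right to left to count permutations with a given $\cycC$, and derive (c) by partitioning $\{\si \in S_n : \cycP(\si)=\la\}$ according to $\cycC(\si)$. The only cosmetic difference is that you formalize (b) as an induction on $\ell(\be)$ while the paper unrolls the same recursion into a single product count $n!/(B_1 B_2 \cdots B_k)$; the paper also cites a reference for (a) rather than re-proving it.
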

\begin{proof}
Part~(a) is a standard result giving the size of the conjugacy class
of $S_n$ indexed by $\la$; see, for example,~\cite[Thm. 7.115]{loehr-comb}.
To prove part~(b), we fix $\be\in C(n)$ and give the following
construction that builds all $\si\in S_n$ with $\cycC(\si)=\be$.
We use $\be=(1,3,2,3)$ as a running example to illustrate the construction.
For convenience, write $k=\ell(\be)$ and $B_i=\be_1+\cdots+\be_i$
for $i=1,2,\ldots,k$.

Begin with a list of cycles with $\be_1$ blanks in the first cycle,
$\be_2$ blanks in the second cycle, and so on. Our example begins
with $(\blk)(\blk,\blk,\blk)(\blk,\blk)(\blk,\blk,\blk)$.
Fill the blanks in the rightmost cycle first. The first blank must be $1$;
there are $n-1$ choices for the next value, $n-2$ choices for the value
after that, and so on; there are $n-(\be_k-1)=B_{k-1}+1$
choices for the last value in this cycle. In our example, we might pick $1$ 
(forced) then $5$ then $2$ to get $(\blk)(\blk,\blk,\blk)(\blk,\blk)(1,5,2)$.
Move left to the next empty cycle. The first blank must be the minimum
value not used already. Then there are $n-\be_k-1=B_{k-1}-1$
choices for the next value, $B_{k-1}-2$ choices for the
next value, and so on; there are 
$n-\be_k-(\be_{k-1}-1)=B_{k-2}+1$ choices for the last value.
In our example, we might pick $3$ (forced) then $9$ to get
$(\blk)(\blk,\blk,\blk)(3,9)(1,5,2)$. Continue filling the cycles from
right to left, noting that the first element in each new cycle must be
the minimum element in $[n]$ not already chosen. 

The number of ways to execute this construction is
the product of all integers in the list $n,n-1,n-2,\ldots,3,2,1$ excluding 
$n=B_k,B_{k-1},B_{k-2},\ldots,B_1=\be_1$.
The excluded integers correspond to the forced choices of the minimum
element in each new cycle. We see that the number of $\si\in S_n$
with $\cycC(\si)=\be$ is $\frac{n!}{B_1B_2\cdots B_k}=\frac{n!}{Z_{\be}}$,
as needed.

To prove part~(c), fix $\la\in P(n)$. Since the canonical cycle notation
of a permutation is unique, the set $\{\si\in S_n: \cycP(\si)=\la\}$
is the disjoint union of the sets $\{\si\in S_n: \cycC(\si)=\be\}$
as $\be$ ranges over all compositions in $C(n)$ that sort to $\la$.
By parts~(a) and~(b), we deduce $$n!/z_{\la}=
\sum_{\substack{\be\in C(n):\\ \sort(\be)=\la}} n!/Z_{\be}.$$
Dividing by $n!$ gives the result.
\end{proof}

\begin{example}
For $\la=(3,2,2)\in P(7)$, we have $z_{\la}=24$,
$Z_{(3,2,2)}=105$, $Z_{(2,3,2)}=70$, $Z_{(2,2,3)}=56$, and
$\frac{1}{105}+\frac{1}{70}+\frac{1}{56}=\frac{1}{24}$.
With the $7!$ factor included, this says $48+72+90=210$.
\end{example}

\iffalse (if needed later:)
Throughout this discussion, we could replace $S_n$ by the group of 
permutations of any totally ordered set $X=\{x_1<x_2<\cdots<x_n\}$.
We may define $\cycP(\si)$ and $\cycC(\si)$ as before, and 
Lemma~\ref{lem:harmonic} still holds.
\fi

%%%%%%%%%%%%%%%%%%%%%%%%%%%%%%%%%%%%%%%%%%%%%%%%%%%%%%%%%%%%%%%%%%%%%%%%%%%%%
\section{Application 3: Incidence Matrices for Composition Refinement}
\label{sec:comp-ref}

%\begin{mdframed}
%(Scratch Notes:)
%Focus on the version where $A_n(\la,\be)=\chi(\la\mbox{ refines }\be)$ here,
%putting all the signs into 
%$B_n(\be,\mu)=(-1)^{\ell(\be)-\ell(\mu)}\chi(\be\mbox{ refines }\mu)$.
%Warning: $\la,\mu$ are compositions here; use this notation
%to match the general framework. Later, make a quick remark about
%the self-inverse case (where signs are distributed equally between
%$A$ and $B$) but don't reprove it all.
%\end{mdframed}

For our third application, we use our local technique to prove a 
well-known inversion result for matrices whose entries record 
when one composition refines another. These matrices, along with
their weighted generalizations considered in Section~\ref{subsec:wtd-appl3},
are transition matrices between certain bases of quasisymmetric functions
and non-commutative symmetric functions (see Remark~\ref{rem:appl3}).
In this application, the local identities~\eqref{eq:gen-local} can be 
informally summarized by the slogan: 
``A local inverse of a composition of $L$ is a signed brick of length $L$.''

%%%%%%%%%%%%%%%%%%%%%%%
\subsection{Refinement Ordering on Compositions.}
\label{subsec:refine-order}

Let $\al,\be$ be compositions of $n$.  
Define $\al\leq\be$ to mean: there exist $i_k$ such that
$0 = i_0 < i_1 < i_2 < \ldots < i_{\ell(\be)} = \ell(\al)$ 
and $\be_k = \al_{i_{k-1}+1} + \ldots + \al_{i_{k}}$
for $k=1,2,\ldots,\ell(\be)$. When this holds, we say
\textit{$\al$ refines $\be$} and \textit{$\be$ coarsens $\al$}.
For example, $(2,1,1,2,1,3,1,1)\leq (3,4,3,2)$.
In general, we go from $\be$ to a composition refining $\be$
by replacing each part $\be_k$ by a composition of $\be_k$.
We go from $\al$ to a composition coarser than $\al$
by picking zero or more blocks of consecutive parts in $\al$
and replacing each block of parts by their sum.
The set $C(n)$ is partially ordered by the refinement relation.
%\begin{example}
%    The composition $(3,4,3,2)$ coarsens the composition $(2,1,1,2,1,3,1,1)$ 
%with $i_1 = 2$, $i_2 = 5$, $i_3 = 6$, $i_4 =8$.
%\end{example}

%We also define the \textit{concatenation} of $\al\in C(m)$ and $\be\in C(n)$ to be the result $\ga \in C(m+n)$ such that $\ga = (\al_1, \ldots, \al_{\ell(\al)}, \be_1, \ldots, \be_{\ell(\be)})$.

We use tableau-like structures called \emph{compositional brick tabloids
(CBTs)} to visualize the refinement relation between compositions. 
These objects resemble the brick tabloids used in~\cite{eg-rem} to study
transition matrices between homogeneous symmetric functions and
elementary symmetric functions, but CBTs have simpler combinatorial structure.
See also~\cite{hicks} where similar objects are studied in connection with
the combinatorics of non-commutative symmetric functions.
Given $\al,\be\in C(n)$, a \emph{CBT of shape $\al$ and content $\be$}
is a filling of the diagram of $\al$ with $\be_k$ copies of 
$k$, for $1\leq k \leq \ell(\be)$, such that values weakly increase
from left to right in each row, and each value in any row is strictly
larger than the values appearing in all higher rows.
Let $\cbt(\al,\be)$ be the set of such objects.
We may interpret an element in $\cbt(\al, \be)$ as a 
tiling of $\dg(\al)$ by labeled bricks, where the $i$th brick
has length $\be_i$, satisfying the stated conditions on brick labels. 

\begin{example}\label{ex:CBT}
\boks{0.4}
%NAL: Used neutral letters \nu and \rho to avoid conflicts later.
%NAL: Changed my mind later since new letters seemed to make it worse.
    For $\al=(4,5,5,3)$ and $\be = (3,1,3,2,5,1,2)$,
the set $\cbt(\al, \be)$ consists of a single object shown here.
    \[ \yt{1112,33344,55555,677} \]
\end{example}

In general, given $\al,\be\in C(n)$, the only possible tiling in 
$\cbt(\al,\be)$ satisfying the 
ordering conditions is formed by laying down the bricks in order from $1$ to
$\ell(\be)$, working through $\dg(\al)$ from the top row down, and working 
left to right within each row. This tiling process succeeds if and only
if $\be$ refines $\al$. Thus, $|\cbt(\al,\be)|=\chi(\be\leq\al)$.
In the case where $\cbt(\al,\be)$ is nonempty, define the \emph{sign}
of the unique CBT in this set to be $(-1)^{\ell(\be)-\ell(\al)}$.
We may view the sign combinatorially by attaching a sign of $+1$ to
the first brick in each row and $-1$ to all other bricks, and letting
the sign of a tiling be the product of the signs of all its bricks.
The CBT in Example~\ref{ex:CBT} has sign $(-1)^3=-1$.

In this third application of the general setup of~\S\ref{subsec:setup},
we let $A_n$ be the $C(n)\times C(n)$ \emph{refinement poset incidence matrix}
with entries 
\begin{equation}\label{eq:refmatA}
 A_n(\la,\be)=\chi(\la\leq\be)=\sum_{T\in\cbt(\be,\la)} 1
\qquad\mbox{for $\la,\be\in C(n)$.}  
\end{equation}
We will prove that $A_n$ has inverse $B_n$ given by
\begin{equation}\label{eq:refmatB}
 B_n(\be,\mu)=(-1)^{\ell(\be)-\ell(\mu)}\chi(\be\leq\mu)
     =\sum_{T\in\cbt(\mu,\be)} \sgn(T)
\qquad\mbox{for $\be,\mu\in C(n)$.}  
\end{equation}

This is a well-known result giving the M\"obius function for the refinement
poset on $C(n)$ (or equivalently, the poset obtained by ordering
the set of subsets of $[n-1]$ by set inclusion). Our point here is to
showcase how this inversion result follows from a simple combinatorial
argument based on our local inversion technique. This same technique
proves a more subtle variation where matrix entries count weighted compositions
(see \S\ref{subsec:wtd-appl3}).

\begin{example}
    For $n = 4$, the incidence matrices are shown here:
    \begin{footnotesize}
    \[
    A_4 = \bbmatrix{~ & 4 & 31 & 22 & 211 & 13 & 121 & 112 & 1111\cr
    4 & 1 & 0 & 0 & 0 & 0 & 0 & 0 & 0 \cr
31 & 1 & 1 & 0 & 0 & 0 & 0 & 0 & 0 \cr
22 & 1 & 0 & 1 & 0 & 0 & 0 & 0 & 0 \cr
211 & 1 & 1 & 1 & 1 & 0 & 0 & 0 & 0 \cr
13 & 1 & 0 & 0 & 0 & 1 & 0 & 0 & 0 \cr
121 & 1 & 1 & 0 & 0 & 1 & 1 & 0 & 0 \cr
112 & 1 & 0 & 1 & 0 & 1 & 0 & 1 & 0 \cr
1111 & 1 & 1 & 1 & 1 & 1 & 1 & 1 & 1
    }
\qquad  B_4 = \bbmatrix{~ & 4 & 31 & 22 & 211 & 13 & 121 & 112 & 1111\cr
    4 & 1 & 0 & 0 & 0 & 0 & 0 & 0 & 0 \cr
31 & -1 & 1 & 0 & 0 & 0 & 0 & 0 & 0 \cr
22 & -1 & 0 & 1 & 0 & 0 & 0 & 0 & 0 \cr
211 & 1 & -1 & -1 & 1 & 0 & 0 & 0 & 0 \cr
13 & -1 & 0 & 0 & 0 & 1 & 0 & 0 & 0 \cr
121 & 1 & -1 & 0 & 0 & -1 & 1 & 0 & 0 \cr
112 & 1 & 0 & -1 & 0 & -1 & 0 & 1 & 0 \cr
1111 & -1 & 1 & 1 & -1 & 1 & -1 & -1 & 1
    } \]

    \end{footnotesize}
\end{example}

\begin{remark}\label{rem:appl3}
Several classical transition matrices for bases of QSym 
(the space of quasisymmetric functions), and NSym 
(the space of non-commutative symmetric functions)
can be inverted using this result. We briefly state the results here,
referring to~\cite{gelfand,hicks,qsym-book} for more details and definitions.
First, the monomial quasisymmetric basis $(M_{\al})$ and
Gessel's fundamental quasisymmetric basis $(F_{\al})$ of QSym satisfy
%(with basis elements indexed by compositions) satisfy
\[ F_{\la}=\sum_{\be\in C(n)} \chi(\be\leq\la)M_{\be} 
          =\sum_{\be\in C(n)} A_n(\be,\la)M_{\be}
\qquad\mbox{for all $\la\in C(n)$.} \]
The (transposed) inversion result for $A_n$ and $B_n$ therefore gives
\[ M_{\be}=\sum_{\mu\in C(n)} B_n(\mu,\be)F_{\mu}
        =\sum_{\mu\in C(n)} (-1)^{\ell(\mu)-\ell(\be)}\chi(\mu\leq\be)F_{\mu}
\qquad\mbox{for all $\be\in C(n)$.} \]
Second, the ribbon Schur basis $(\rr_{\al})$ and
the non-commutative complete basis $(\hh_{\al})$ of NSym satisfy 

\[ \hh_{\be}=\sum_{\al\in C(n)} \chi(\be\leq\al)\rr_{\al} \mbox{ and hence }
   \rr_{\be}=\sum_{\al\in C(n)} (-1)^{\ell(\be)-\ell(\al)}\chi(\be\leq\al)
 \hh_{\al} \quad\mbox{ for all $\be\in C(n)$.} \]
Third, by redistributing signs between the matrices $A_n$ and $B_n$,
it is routine to check that the $C(n)\times C(n)$ matrix given by
$\overline{A_n}(\la,\be)=(-1)^{n-\ell(\la)}\chi(\la\leq\be)$ is its own inverse.
This matrix is the transition matrix (in both directions) between
the NSym bases $(\hh_{\al})$ and $(\ee_{\al})$: 

\begin{equation}\label{eq:NSym-he}
 \hh_{\be}=\sum_{\mu\in C(n)} \overline{A_n}(\la,\be)\ee_{\la}
\quad\mbox{ and }\quad
   \ee_{\be}=\sum_{\mu\in C(n)} \overline{A_n}(\la,\be)\hh_{\la}
\quad\mbox{ for all $\be\in C(n)$}. 
\end{equation}
%MAYBE: Give our bijective proof of this identity in a subsection at end of
%this section, if we can't find that proof anywhere in the NSym literature.
%NAL 4/29/25: Let's omit this for now, to save time/space.
Fourth, the weighted variation in Section~\ref{subsec:wtd-appl3}
(with a sign adjustment) gives the transition matrices between
the NSym bases $(\textbf{h}_{\al})$ and $(\pow_{\al})$, namely:
%Signs below have been checked.
\begin{equation}\label{eq:Nsym-h-pow}
\textbf{h}_\beta = \sum\limits_{\lambda\in C(n)} 
\dfrac{\chi(\lambda\leq \beta)}{Z_{\beta,\lambda}} \pow_\lambda 
\ \mbox{ and }\ 
\pow_\mu = \sum\limits_{\beta\in C(n)} (-1)^{\ell(\mu) - \ell(\beta)} 
\chi(\beta \leq \mu) L_{\mu,\beta} \,\textbf{h}_\beta
\quad\mbox{ for all $\be,\mu\in C(n)$,}
\end{equation}
where $Z_{\be,\la}$ and $L_{\mu,\be}$ are defined in \S\ref{subsec:wtd-appl3}.
\end{remark}

%%%%%
\subsection{Recursions for $A_n$ and $B_n$}
\label{subsec:refine-recs}
% NAL: I rewrote this because ordering got interchanged at some point
%  (refinement vs. coarsening) compared to our earlier notes from the meeting.
%  Essentially, roles of $A_n$ and $B_n$ get switched, so the recursions
%   look different because of where the signs are.
Given $\la,\be\in C(n)$ with $\be=(\be^*,L(\be))$, 
a CBT of shape $\be$ and content $\la$ (if it exists) consists of 
a CBT of shape $\be^*$ and content $\ga$ (where $\ga$ is
some prefix of $\la$), followed by the bottom row of length $L(\be)$
filled with bricks consisting of the suffix of $\la$ not in $\ga$.
For any $\la\in C(n)$ and $L\in\Z_{>0}$, define $S(\la,L)$ as follows.
If $\la$ has a suffix $\la_{k+1},\ldots,\la_{\ell(\la)}$ with sum $L$,
let $S(\la,L)=\{\ga\}$ where $\ga=(\la_1,\ldots,\la_k)$; 
otherwise let $S(\la,L)=\emptyset$.
The preceding discussion of CBT proves that
\begin{equation}\label{eq:A-rec3}
 A(\la,\be)=\sum_{\ga\in S(\la,L(\be))} A(\ga,\be^*),
\end{equation}
which is an instance of~\eqref{eq:A-rec} with $\wt_A(\la,\ga)=1$.
Note that if $\ga$ exists but does not refine $\be^*$, then $\la$
does not refine $\be$ and both sides of~\eqref{eq:A-rec3} are zero.
In Example~\ref{ex:CBT}, renaming the partitions as 
$\la=(3,1,3,2,5,1,2)$ and $\be=(4,5,5,3)$,
we have $L(\be)=3$, $\be^*=(4,5,5)$, $\ga=(3,1,3,2,5)$,
and $A(\la,\be)=1=A(\ga,\be^*)$.

Next, given $\be,\mu\in C(n)$ with $\be=(\be^*,L(\be))$, we may pass
from a signed CBT of shape $\mu$ and content $\be$ (if it exists) to a smaller
signed CBT of some shape $\de$ and content $\be^*$ by removing the final brick
(with length $L(\be)$ and label $\ell(\be)$) along with the cells in
$\dg(\mu)$ occupied by that brick. We obtain $\de$ from $\mu$ by subtracting
$L(\be)$ from the last part of $\mu$ and deleting the new part if it is zero.
In the case where the last row of $\dg(\mu)$ has more than one 
brick, the sign changes since we removed a brick of sign $-1$ from the last row.
In the case where the last row of $\dg(\mu)$ has a single brick, 
the sign does not change since we removed the first (and only) brick from the 
last row.  For $\mu=(\mu_1,\ldots,\mu_k)\in C(n)$ and $L>0$, use three cases to
define the set $T(\mu,L)$. If $L>\mu_k$, let $T(\mu,L)=\emptyset$.
If $L=\mu_k$, let $T(\mu,L)=\{\de\}$ with $\de=(\mu_1,\ldots,\mu_{k-1})$
and $\sgn(\mu,\de)=+1$. If $L<\mu_k$, let $T(\mu,L)=\{\de\}$ with
$\de=(\mu_1,\ldots,\mu_{k-1},\mu_k-L)$ with $\sgn(\mu,\de)=-1$.
The preceding discussion of CBT proves the recursion
\begin{equation}\label{eq:B-rec3}
 B(\be,\mu)=\sum_{\de\in T(\mu,L(\be))} \sgn(\mu,\de)B(\be^*,\de),
\end{equation}
which is an instance of~\eqref{eq:B-rec} with $\wt_B(\mu,\de)=\sgn(\mu,\de)$.
In Example~\ref{ex:CBT}, renaming the partitions as 
$\be=(3,1,3,2,5,1,2)$ and $\mu=(4,5,5,3)$,
we have $L(\be)=2<\mu_4$, $\be^*=(3,1,3,2,5,1)$, $\de=(4,5,5,1)$,
$\sgn(\mu,\de)=-1$, and $B(\be,\mu)=-1=-B(\be^*,\de)$.

%%%%%%%%%%%%%%%%%%%%%%%
\subsection{Proof of the Local Identity}
\label{subsec:prove-local3}

In this application, the local identity (\ref{eq:gen-local}) 
takes the following form.

\begin{theorem}\label{thm:local3}
Given $n>0$ and compositions $\la, \mu \in C(n)$, 
let $G(\la, \mu)$ be the set of compositions $\ga$
such that $\ga$ is a prefix of $\la$ and $\ga$
can be obtained from $\mu$ by decreasing the last part of $\mu$
by some positive amount $L$. Then
\begin{equation}\label{eq:local3}
 \sum\limits_{\ga \in G(\la, \mu)} \sgn(\mu,\ga) 
= \chi(\la = \mu).
\end{equation}
Therefore (by Theorem \ref{thm:main}), the matrices $A_n$ and $B_n$
in~\eqref{eq:refmatA} and~\eqref{eq:refmatB} are inverses of each other.
\end{theorem}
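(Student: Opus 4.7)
The plan is to unpack $G(\la,\mu)$ explicitly using the description of $T(\mu,L)$ from Section~\ref{subsec:refine-recs}, then run a short case analysis. Write $\mu=(\mu_1,\ldots,\mu_k)$. The sets $T(\mu,L)$ contribute only two ``flavors'' of candidate intermediate shape: a single \emph{truncation} $\ga_A=(\mu_1,\ldots,\mu_{k-1})$ arising from $L=\mu_k$ with $\sgn(\mu,\ga_A)=+1$, and \emph{shrinkings} $\ga_B(L)=(\mu_1,\ldots,\mu_{k-1},\mu_k-L)$ for $1\leq L<\mu_k$, each with $\sgn(\mu,\ga_B(L))=-1$. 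Every such $\ga$ begins with $(\mu_1,\ldots,\mu_{k-1})$, so the first question is whether this sequence is a prefix of $\la$.

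If $(\mu_1,\ldots,\mu_{k-1})$ is not a prefix of $\la$, then neither $\ga_A$ nor any $\ga_B(L)$ lies in $S(\la,L)$, so $G(\la,\mu)=\emptyset$ and both sides of~\eqref{eq:local3} are $0$ (note $\la\neq\mu$ here).

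If $(\mu_1,\ldots,\mu_{k-1})$ \emph{is} a prefix of $\la$, then $\la_i=\mu_i$ for $i<k$, and the remaining parts of $\la$ are positive and sum to $\mu_k>0$; in particular $\la_k$ exists and $\la_k\leq\mu_k$. Now $\ga_A$ automatically lies in $G(\la,\mu)$ (contributing $+1$), and $\ga_B(L)$ lies in $G(\la,\mu)$ iff $\la_k=\mu_k-L$, which happens for some admissible $L\in\{1,\ldots,\mu_k-1\}$ iff $\la_k<\mu_k$. Two sub-cases finish the argument: (i) if $\la_k<\mu_k$, then a unique shrinking $\ga_B(\mu_k-\la_k)$ appears with sign $-1$, cancelling $\ga_A$'s $+1$, and $\la\neq\mu$; (ii) if $\la_k=\mu_k$, then no shrinking appears, the sum is $+1$, and the identity $\la_1+\cdots+\la_k=n$ forces $\ell(\la)=k$ and hence $\la=\mu$. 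In every case the sum equals $\chi(\la=\mu)$, so Theorem~\ref{thm:main} yields $A_nB_n=I$.

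There is no real obstacle: the whole argument is a pigeonhole-style matching between the two ways $T(\mu,L)$ can realize an intermediate shape that is also a prefix of $\la$. The one point requiring care is the boundary sub-case $\la_k=\mu_k$, where the constraint that the suffix of $\la$ past $\ga_A$ has positive weight $\mu_k$ is exactly what pins down $\ell(\la)=k$ and forces equality $\la=\mu$; this is what makes the diagonal term survive.
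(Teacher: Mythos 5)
Your proof is correct and takes essentially the same approach as the paper: both classify the candidate intermediate shapes into the length-$(k-1)$ truncation (sign $+1$) and the length-$k$ shrinkings (sign $-1$), observe all of them share the prefix $(\mu_1,\ldots,\mu_{k-1})$, and then case-split on whether that prefix sits inside $\la$ and whether $\la_k=\mu_k$. The paper organizes the cases as $\la=\mu$ versus $\la\neq\mu$ first and then checks prefix-compatibility, while you reverse the order, but the decomposition, the cancellation $+1-1$, and the boundary observation that $\la_k=\mu_k$ forces $\la=\mu$ are identical.
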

\begin{proof}
The left side of~\eqref{eq:local3} can be interpreted as the signed sum
of all possible ways of transforming $\mu=(\mu_1,\ldots,\mu_k)$ 
into $\la$ by the following two-step procedure. First, decrease the
last part of $\mu$ by some amount $L>0$, using the sign $+1$ if the
entire last part is removed and $-1$ otherwise, to reach some $\ga$. 
Second, append some composition of $L$ to $\ga$ to reach $\la$.
The intermediate object $\ga$ must be either $\mu^*=(\mu_1,\ldots,\mu_{k-1})$
or $(\mu_1,\ldots,\mu_{k-1},c)$ where $0<c<\mu_k$. 
The first object has sign $+1$ and may be used if and only if
$\mu^*$ is a prefix of $\la$. The second object has sign $-1$
and may be used if and only if $\mu^*$ is a prefix of $\la$ and $\la_k=c$. 

In the case where $\la=\mu$, we cannot use the second object since
$c<\mu_k=\la_k$. But the first object works since $\mu^*=\la^*$ is 
a prefix of $\la$. In this case, the left side of~\eqref{eq:local3} is 1,
as needed.  In the case where $\la\neq\mu$ and $\mu^*$ is 
not a prefix of $\la$, the sum on the left side of~\eqref{eq:local3} is 
vacuous and both sides are $0$. Finally, consider the case where
$\la\neq\mu$ and $\mu^*$ is a prefix of $\la$. Here $c=\la_k$ must satisfy
$0<c<\mu_k$, since $\la_k=\mu_k$ would contradict $\la\neq\mu$, while
$c=0$ or $\la_k>\mu_k$ would contradict $\la,\mu\in C(n)$. In this case,
$G(\la,\mu)=\{\mu^*,(\mu^*,\la_k)\}$. So both sides of~\eqref{eq:local3}
are $0$, as needed.

\end{proof}

To illustrate the final case of the proof,
take $\la=(4,1,3,2,1,3)$ and $\mu=(4,1,3,6)$.
Then $G(\la,\mu)=\{(4,1,3),(4,1,3,2)\}$.
On the other hand, $G(\mu,\mu)=\{(4,1,3)\}$.

%%%%%%%%%%%%%%%%%%%%%%%
\subsection{Weighted Variant of the Inversion Result.}
\label{subsec:wtd-appl3}

This subsection proves a weighted variation of the inversion
result in Theorem~\ref{thm:local3}. After rearranging
some signs, we obtain the transition matrices between the
bases $(\mathbf{h}_{\al})$ and $(\pow_{\al})$ of NSym,
as stated in~\eqref{eq:Nsym-h-pow} of Remark~\ref{rem:appl3}.

Suppose $\al,\be\in C(n)$ and $\be\leq\al$, so
$\cbt(\al,\be)$ contains a unique object $T$.
For $1\leq i\leq\ell(\al)$, let $\be^{(i)}\in C(\al_i)$
be the list of consecutive parts in $\be$ whose associated
bricks appear in $T$ in row $i$ of $\dg(\al)$.
For any composition $\ga=(\ga_1,\ga_2,\ldots,\ga_s)$,
define $Z_{\ga}=\ga_1(\ga_1+\ga_2)\cdots(\ga_1+\ga_2+\cdots+\ga_s)$
as in~\S\ref{subsec:scale_Z_beta}. 
When $\be\leq\al$, define $Z_{\al,\be}=\prod_{i=1}^{\ell(\al)} Z_{\be^{(i)}}$
and $L_{\al,\be}=\prod_{i=1}^{\ell(\al)} L(\be^{(i)})$.
Thus $Z_{\al,\be}$ is the product of the $Z$-factors arising from
the compositions in each row of $T$, while 
$L_{\al,\be}$ is the product of the lengths of the last bricks
in each row of $T$. In Example~\ref{ex:CBT}, where
$\al=(4,5,5,3)$ and $\be = (3,1,3,2,5,1,2)$, we compute
\[ Z_{\al,\be}=Z_{(3,1)}Z_{(3,2)}Z_{(5)}Z_{(1,2)}
               =(3\cdot 4)\cdot (3\cdot 5)\cdot (5)\cdot (1\cdot 3) =2700 \]
and $L_{\al,\be}=1\cdot 2\cdot 5\cdot 2=20$.

\begin{theorem}\label{thm:local3a}
For $\la,\be,\mu\in C(n)$, define
\[ A_n(\la,\be)=\chi(\la\leq\be)L_{\be,\la}\quad\mbox{ and }\quad
   B_n(\be,\mu)=(-1)^{\ell(\be)-\ell(\mu)}\chi(\be\leq\mu)Z_{\mu,\be}^{-1}. \]
The matrices $A_n$ and $B_n$ are inverses of each other.
\end{theorem}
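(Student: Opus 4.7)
The plan is to apply the general framework of Theorem~\ref{thm:main}: show that $A_n$ and $B_n$ fit the setup of \S\ref{subsec:setup} with suitable weight functions, then verify the resulting local identity by reducing it directly to Theorem~\ref{thm:local3}.

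First, I would derive a recursion for $A_n$ analogous to~\eqref{eq:A-rec3}. Fix $\la,\be\in C(n)$ with $\be=(\be^*,L(\be))$ and $\la\leq\be$, so that a unique CBT $T$ of shape $\be$ and content $\la$ exists. The bottom row of $T$ is tiled by bricks corresponding to a suffix of $\la$'s parts summing to $L(\be)$; let $\ga$ denote the complementary prefix. The last brick in this bottom row has length $L(\la)$, and the bricks sitting in higher rows form a CBT of shape $\be^*$ and content $\ga$ contributing $L_{\be^*,\ga}$. Hence $L_{\be,\la}=L(\la)\cdot L_{\be^*,\ga}$, and with $S(\la,L)$ defined exactly as in \S\ref{subsec:refine-recs} we obtain
\[ A(\la,\be)=\sum_{\ga\in S(\la,L(\be))} L(\la)\,A(\ga,\be^*), \]
an instance of~\eqref{eq:A-rec} with $\wt_A(\la,\ga)=L(\la)$.

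Next, I would derive the analogous recursion for $B_n$, reusing $T(\mu,L)$ and $\sgn(\mu,\de)$ from \S\ref{subsec:refine-recs}. The substantive new ingredient is the factorization
\[ Z_{\mu,\be}=L(\mu)\cdot Z_{\de,\be^*} \qquad\mbox{for the unique $\de\in T(\mu,L(\be))$,} \]
verified by a direct computation on the row factor $Z_{\be^{(k)}}$, where $k=\ell(\mu)$. In the full-row case $L(\be)=\mu_k$, that row factor is $\mu_k=L(\mu)$ and the remaining row factors match between $Z_{\mu,\be}$ and $Z_{\de,\be^*}$. In the partial-row case $L(\be)<\mu_k$, the partial sums defining $Z_{\be^{(k)}}$ end with $\mu_k=L(\mu)$, so $Z_{\be^{(k)}}=Z_{(\be^{(k)})^*}\cdot L(\mu)$, giving the same factorization. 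Combining this with the sign analysis already worked out in \S\ref{subsec:refine-recs} yields
\[ B(\be,\mu)=\sum_{\de\in T(\mu,L(\be))} L(\mu)^{-1}\sgn(\mu,\de)\,B(\be^*,\de), \]
an instance of~\eqref{eq:B-rec} with $\wt_B(\mu,\de)=L(\mu)^{-1}\sgn(\mu,\de)$.

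Plugging these weights into~\eqref{eq:gen-local} reduces the problem to the local identity
\[ \frac{L(\la)}{L(\mu)}\sum_{\ga\in G(\la,\mu)} \sgn(\mu,\ga) = \chi(\la=\mu), \]
where $G(\la,\mu)$ is exactly the set appearing in Theorem~\ref{thm:local3}, since $S(\la,L)$ and $T(\mu,L)$ are inherited unchanged from the unweighted setting. When $\la\neq\mu$, Theorem~\ref{thm:local3} kills the inner sum; when $\la=\mu$, the ratio $L(\la)/L(\mu)$ equals $1$ and the inner sum equals $1$, giving the required value. Theorem~\ref{thm:main} then delivers $A_nB_n=I_{C(n)}$. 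The only genuinely new piece of arithmetic is the factorization $Z_{\mu,\be}=L(\mu)\cdot Z_{\de,\be^*}$; once that is in hand, the weighted inversion inherits its cancellation directly from the unweighted Theorem~\ref{thm:local3}, with no further obstacle.
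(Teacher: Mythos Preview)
Your proposal is correct and follows essentially the same route as the paper: you derive the weighted recursions with $\wt_A(\la,\ga)=L(\la)$ and $\wt_B(\mu,\de)=L(\mu)^{-1}\sgn(\mu,\de)$ via the factorizations $L_{\be,\la}=L(\la)L_{\be^*,\ga}$ and $Z_{\mu,\be}=L(\mu)Z_{\de,\be^*}$, then pull the constant $L(\la)/L(\mu)$ outside the local sum and invoke Theorem~\ref{thm:local3}. The only difference is cosmetic---you split the $Z$-factorization into the full-row and partial-row cases explicitly, whereas the paper handles them in one sentence.
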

\begin{proof}
We adapt the proof of Theorem~\ref{thm:local3} by incorporating the
extra weight factors in the entries of $A_n$ and $B_n$.
First consider the recursion~\eqref{eq:A-rec3}.
Say we remove the last row of a CBT $T$ of shape $\be$ and content $\la$
to get a CBT $T^*$ of shape $\be^*$ and content $\ga$. The last bricks
of $T$ and $T^*$ are the same except $T$ has a last brick of size
$L(\la)$ in its final row of length $L(\be)$.
Therefore, $L_{\be,\la}=L_{\be^*,\ga}L(\la)$. 
So recursion~\eqref{eq:A-rec3} becomes
\begin{equation}\label{eq:A-rec3a}
 A(\la,\be)=\sum_{\ga\in S(\la,L(\be))} L(\la)A(\ga,\be^*),
\end{equation}
which is an instance of~\eqref{eq:A-rec} with $\wt_A(\la,\ga)=L(\la)$.

Next consider the recursion~\eqref{eq:B-rec3}.
Say we remove the last brick in the last row of a CBT $T$ of shape
$\mu$ and content $\be$ to get a CBT $T^*$ of shape $\de$ and content $\be^*$.
All the factors in $Z_{\mu,\be}$ and $Z_{\de,\be^*}$ are the same except
the final row of $T$ contributes an extra factor $L(\mu)$ compared to $T^*$.
In other words, $Z_{\mu,\be}=Z_{\de,\be^*}L(\mu)$. 
The sign analysis is the same as before, so we get
\begin{equation}\label{eq:B-rec3a}
 B(\be,\mu)=\sum_{\de\in T(\mu,L(\be))} \sgn(\mu,\de)L(\mu)^{-1}B(\be^*,\de),
\end{equation}
which is an instance of~\eqref{eq:B-rec} with 
$\wt_B(\mu,\de)=\sgn(\mu,\de)L(\mu)^{-1}$.

Finally, taking the new weights into account, the local 
identity~\eqref{eq:local3} becomes
\begin{equation}\label{eq:local3a}
 \sum\limits_{\ga \in G(\la, \mu)} \frac{L(\la)}{L(\mu)}\sgn(\mu,\ga) 
= \chi(\la = \mu).
\end{equation}
This reduces to~\eqref{eq:local3} when $\la=\mu$.
When $\la\neq\mu$, the factor $L(\la)/L(\mu)$ can be brought outside the sum,
and the result again follows from~\eqref{eq:local3}.
Theorem~\ref{thm:local3a} follows from~\eqref{eq:local3a}
and Theorem~\ref{thm:main}.
\end{proof}

%%%%%%%%%%%%%%%%%%%%%%%
%\subsection{MAYBE NOT: Bijective Proof of~\eqref{eq:NSym-he}}
%\label{subsec:bij-Nsym-he}
% Probably omit this to save space/page count?

%%%%%%%%%%%%%%%%%%%%%%%%%%%%%%%%%%%%%%%%%%%%%%%%%%%%%%%%%%%%%%%%%%%%%%%%%%%%%
\section{Application 4: Brick Tabloids}
\label{sec:brick-tabl}

For our fourth application, we will prove inversion results for 
rectangular matrices that count certain weighted brick tabloids.
These are relevant to the transition matrices between the monomial 
basis and power-sum basis of symmetric functions. In this case,
the local identities~\eqref{eq:gen-local} can be informally summarized 
by the slogan: 
``A local inverse of a brick of size $L$ is a partition of $L$
(with suitable signs and weights).''

\subsection{Background on Brick Tabloids.}
\label{subsec:def-brick-tabl}

In this application, we often view integer partitions as multisets of
parts. For each partition $\la$ and $i>0$, define the \emph{multiplicity}
$m_i(\la)$ to be the number of times $i$ appears as a part in $\la$.
For partitions $\la$ and $\mu$, let $\la\uplus\mu$, $\la\cap\mu$,
and $\la\smin\mu$ be the partitions obtained by applying the standard
set operations to $\la$ and $\mu$ viewed as multisets. More precisely,
for each $i$, we have $m_i(\la\uplus\mu)=m_i(\la)+m_i(\mu)$,
$m_i(\la\cap\mu)=\min(m_i(\la),m_i(\mu))$, and
$m_i(\la\smin\mu)=\max(m_i(\la)-m_i(\mu),0)$.
Let $\la\subseteq\mu$ mean $m_i(\la)\leq m_i(\mu)$ for all $i>0$.
Let $i\in\mu$ mean $m_i(\mu)>0$; that is, $i$ appears as a part of $\mu$.

\begin{example}
For $\la=(4,2,2,1,1,1)$ and $\mu = (3,2,1,1)$, 
$\la\uplus\mu=(4,3,2,2,2,1,1,1,1,1)$, $\la\cap\mu=(2,1,1)$,
$\la\smin\mu=(4,2,1)$ and $\mu\smin\la = (3)$.
Here, and in general, we have $\la=(\la\smin\mu)\uplus (\la\cap\mu)$
and $\mu=(\mu\smin\la)\uplus(\la\cap\mu)$.
\end{example}

\iffalse
In \cite{eg-rem}, the authors define brick tabloids as certain tilings of
partition shapes using bricks that have partition content. The compositional
brick tabloids (CBT) defined in~\S\ref{subsec:refine-order}
provide an analog where we tile composition shapes with composition content. 
We now define intermediate objects where one index is a partition 
and the other is a composition.
\fi

Following~\cite{eg-rem}, we define an \textit{ordered brick tabloid} (OBT)
of shape $\la\in P(n)$ and content $\be\in C(n)$ 
to be a filling of $\dg(\la)$ such that 
each label $i$ appears in $\be_i$ cells, all in the same row,
and labels in each row weakly increase from left to right.
Let $\obt(\la,\be)$ be the set of such objects, which may be 
viewed as tilings of the diagram of $\la$ using labeled horizontal bricks
of lengths $\be_1,\be_2,\ldots$.

\begin{example}\label{ex:obts}
\boks{0.4}
For $\be = (2,1,1,3)$ and $\la = (4,3)$, 
the three ordered brick tabloids in $\obt(\la,\be)$ are shown here.
    \[
    \yt{1123,444} \qquad \yt{2444,113} \qquad \yt{3444,112}
    \]
\end{example}

Recall the set $\cbt(\be,\al)$ of compositional brick tabloids
of shape $\be\in C(n)$ and content $\al\in C(n)$, which we defined 
in~\S\ref{subsec:refine-order}. For $\be\in C(n)$ and $\mu\in P(n)$,
define the set $\bt(\be,\mu)$ of \emph{brick tabloids of shape $\be$ and 
type $\mu$} to consist of all CBTs of shape $\be$ and content $\al$
where $\sort(\al)=\mu$.
% NAL: Use \BT to avoid conflict with the matrix B in general setup.
For $T\in \cbt(\be, \al)$, define $\wt(T)=L_{\be,\al}$,
which is the product of the lengths of the last (rightmost) brick in each row.
%which we denoted by $L_{\beta,\alpha}$ in \S \ref{subsec:wtd-appl3}. 
For $\be\in C(n)$ and $\mu\in P(n)$, define 
$w_{\be, \mu} = \sum_{T\in\bt(\be,\mu)} \wt(T)$.
In the special case $\be=(n)$, define $W_{\mu}=w_{(n),\mu}=\sum_{\al} L(\al)$,
where we sum over all $\al\in C(n)$
with $\sort(\al)=\mu$.  Note that $W_{(n)}=n$.

\begin{example}\label{ex:bricktabloids}
\boks{0.4}
Let $\be= (3,1,2)$ and $\mu = (2,2,1,1)$. The set $\bt(\be,\mu)$ contains
the two objects shown here.
\[ \yt{122,3,44}\qquad \yt{112,3,44} \]
The objects have weights $2\cdot 1 \cdot 2 = 4$ and $1\cdot 1\cdot 2 = 2$, respectively, so $w_{\be,\mu}=6$.
\end{example}

%A rearrangement $\al$ of $\mu\in P(n)$ can be represented as the unique object in $T_\al:= \cbt((n), \al)$ where we have $\be_i$ copies of $i$ filling in the $n$ boxes of $(n)$. In this case, $\wt(T_\al) = L(\al)$ which is the last part of $\al$, and this implies that $w_{(n), \mu} = \sum\limits_{\al: \sort(\al) =  \mu} L(\be)$. We observe that $w_{(n), (n)} = n$.

\begin{lemma}\label{lem:local4main}
(a)~For all $\mu\in P(n)$ such that $\mu\neq(n)$,
$W_{\mu} = \sum\limits_{i\in \mu} W_{\mu\smin (i)}$.
\\ (b)~For all partitions $\mu$,
$W_{\mu}=\frac{|\mu|}{\ell(\mu)}\binom{\ell(\mu)}{m_1(\mu),m_2(\mu),\ldots}$.
\end{lemma}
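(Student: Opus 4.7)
My plan is to unpack the definition $W_{\mu} = \sum_{\alpha} L(\alpha)$, where $\alpha$ ranges over compositions in $C(|\mu|)$ with $\sort(\alpha) = \mu$ (since a brick tabloid $T \in \bt((n),\mu)$ is completely determined by the composition $\alpha$ of its brick lengths read left to right, and $\wt(T) = L(\alpha)$). With this interpretation in hand, I would prove (b) by direct counting and then obtain (a) by a short bijective/symmetry argument (with an algebraic cross-check from (b)).

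For (b), write $N(\nu) = \ell(\nu)! / \prod_k m_k(\nu)!$ for the number of rearrangements of a partition $\nu$. Grouping the rearrangements $\alpha$ of $\mu$ by the value $i = L(\alpha)$ of the last part, the compositions with $\sort(\alpha) = \mu$ and $L(\alpha) = i$ correspond via truncation to rearrangements of $\mu \smin (i)$. Since $\ell(\mu\smin(i)) = \ell(\mu)-1$ and $m_i(\mu\smin(i)) = m_i(\mu)-1$, a direct multinomial simplification gives
\[ N(\mu \smin (i)) = \frac{(\ell(\mu)-1)!}{(m_i(\mu)-1)!\prod_{k\neq i} m_k(\mu)!} = \frac{m_i(\mu)}{\ell(\mu)}\, N(\mu). \]
Summing $i$ times this count over the distinct parts $i \in \mu$,
\[ W_{\mu} = \sum_{i \in \mu} i\cdot \frac{m_i(\mu)}{\ell(\mu)} N(\mu) = \frac{N(\mu)}{\ell(\mu)} \sum_{i \in \mu} i\, m_i(\mu) = \frac{|\mu|}{\ell(\mu)} N(\mu), \]
using $\sum_{i \in \mu} i\, m_i(\mu) = |\mu|$. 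This is exactly (b).

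For (a), the key observation is that the concatenation map $(i,\beta) \mapsto (\beta_1,\ldots,\beta_{\ell(\mu)-1},i) =: \alpha$ is a bijection between pairs $(i,\beta)$ with $i \in \mu$ and $\beta$ a rearrangement of $\mu \smin (i)$, and rearrangements $\alpha$ of $\mu$. Under this bijection $L(\beta) = \beta_{\ell(\mu)-1} = \alpha_{\ell(\mu)-1}$, so
\[ \sum_{i \in \mu} W_{\mu \smin (i)} = \sum_{(i,\beta)} L(\beta) = \sum_{\alpha} \alpha_{\ell(\mu)-1}, \]
while by definition $W_{\mu} = \sum_{\alpha} \alpha_{\ell(\mu)}$. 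Because $\mu \neq (n)$ forces $\ell(\mu)\geq 2$, the transposition that swaps positions $\ell(\mu)-1$ and $\ell(\mu)$ acts as a bijection on the set of rearrangements of $\mu$; hence $\sum_{\alpha}\alpha_{\ell(\mu)-1} = \sum_{\alpha}\alpha_{\ell(\mu)} = W_{\mu}$, and (a) follows. As a sanity check, one may instead substitute (b) into each side and simplify using $\sum_{i\in\mu} m_i(\mu) = \ell(\mu)$ together with $\sum_{i\in\mu} i\,m_i(\mu) = |\mu|$ to get the identity directly.

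Neither step presents a real obstacle; the main thing to keep straight is the convention that the summation index ``$i \in \mu$'' runs over distinct part-values of $\mu$ (so that $\mu \smin (i)$ removes exactly one copy of $i$), together with the accompanying multinomial identity $N(\mu\smin(i)) = (m_i(\mu)/\ell(\mu))\,N(\mu)$. Once that is stated carefully, both parts reduce to elementary symmetry and double-counting.
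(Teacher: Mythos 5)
Your proposal is correct, but it routes both parts differently from the paper, so a comparison is worth recording.

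For part~(a), the paper conditions on the \emph{first} part of the rearrangement $\alpha$ of $\mu$: it partitions the set $S = \{\alpha : \sort(\alpha)=\mu\}$ into blocks $S_i = \{\alpha \in S : \alpha_1 = i\}$ and uses the prepending map $\gamma \mapsto (i,\gamma)$ from rearrangements of $\mu\smin(i)$ to $S_i$. Since $\mu\neq (n)$ forces $\gamma$ nonempty, prepending leaves the \emph{last} part untouched, and (a) drops out immediately. You instead condition on the last part and append $i$, which destroys the last entry; that is why you need the extra observation that $\sum_{\alpha}\alpha_{\ell(\mu)-1}=\sum_{\alpha}\alpha_{\ell(\mu)}$ by the transposition swapping the final two positions. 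Both arguments are valid bijective proofs of the same recursion; the paper's version is slightly shorter because the statistic it tracks is invariant under the bijection, whereas yours requires a second symmetry step.

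For part~(b), the paper proves the rearranged identity $\ell(\mu)\,W_{\mu}=|\mu|\binom{\ell(\mu)}{m_1(\mu),m_2(\mu),\ldots}$ by an explicit bijection between two sets of decorated one-row brick tilings (one set has a marked brick plus a marked cell in the rightmost brick; the other has a single marked cell). Your argument is the direct multinomial count: group rearrangements by last part, use $N(\mu\smin(i))=\tfrac{m_i(\mu)}{\ell(\mu)}N(\mu)$, and sum. This is shorter and perfectly rigorous, but it is an algebraic/enumerative argument rather than a bijective one, which matters in this paper because the authors later assemble the local identity proofs into global bijections. (A version of your computation appears commented out in the source; the authors evidently chose to present the bijective form instead.) Finally, you derive (b) first and treat (a) as a consequence/cross-check, whereas the paper proves (a) first and (b) independently; this ordering difference is harmless since neither part is used to prove the other in either write-up.
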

\begin{proof}
To prove (a), fix $\mu\in P(n)$ with $\mu\neq(n)$. 
Let $S$ be the set of $\al\in C(n)$ with $\sort(\al)=\mu$.
For each $i$ appearing as a part of $\mu$, let $S_i=\{\al\in S:\al_1=i\}$,
and let $T_i$ be the set of $\ga\in C(n-i)$ with $\sort(\ga)=\mu\smin(i)$.
The map $\ga\mapsto (i,\ga)$ is a bijection from $T_i$ onto $S_i$ that
preserves the last part. Also $S$ is the disjoint union of the sets $S_i$.
Therefore,
\[ W_{\mu}=\sum_{\al\in S} L(\al)
  =\sum_{i\in\mu}\sum_{\al\in S_i} L(\al)
  =\sum_{i\in\mu}\sum_{\ga\in T_i} L(\ga)
  =\sum_{i\in\mu} W_{\mu\smin(i)}. \] %\qedhere \]

% NAL: To save space, only give one proof of (b).
\iffalse
The formula holds when $\mu=(n)$ as both sides are $n$.
Otherwise, define $S$ as in~(a) and $S_i'=\{\al\in S: L(\al)=i\}$.
We obtain $\al\in S_i'$ by permuting all parts of $\mu$ except for
one part of size $i$ that appears last; thus
$|S_i'|=\binom{\ell(\mu)-1}{m_1(\mu),\ldots,m_i(\mu)-1,\ldots,m_n(\mu)}$.
Each $\al\in S_i'$ contributes a term $i$ to $W_{\mu}$. Therefore,
\begin{align*}
 W_{\mu} & =\sum_{\al\in S} L(\al)
  =\sum_{i\in\mu}\sum_{\al\in S_i'} i
  =\sum_{i\in\mu} i 
\binom{\ell(\mu)-1}{m_1(\mu),\ldots,m_i(\mu)-1,\ldots,m_n(\mu)}
\\ &=\sum_{i\in\mu}\frac{im_i(\mu)}{\ell(\mu)}
\binom{\ell(\mu)}{m_1(\mu),\ldots,m_i(\mu),\ldots,m_n(\mu)}
=\frac{|\mu|}{\ell(\mu)}
\binom{\ell(\mu)}{m_1(\mu),m_2(\mu),\ldots,m_n(\mu)}. 
\end{align*}
\fi
We give a bijective proof of~(b) in the rearranged form
$\ell(\mu)W_{\mu}=|\mu|\binom{\ell(\mu)}{m_1(\mu),m_2(\mu),\ldots}$.
Write $n=|\mu|$.
The left side counts the set $S$ of fillings of a row $(n)$ with
a rearrangement of the parts of $\mu$ (the bricks) where one brick
has been marked (accounting for $\ell(\mu)$) and one cell in the
rightmost brick has been marked (accounting for the way brick tabloids
are weighted in $W_{\mu}$). The right side counts the set $T$
of fillings of a row $(n)$ with a rearrangement of the parts of $\mu$ 
where one of the $n$ cells has been marked. Define a bijection 
$g:T\rightarrow S$ as follows. For $t\in T$, suppose the marked cell
of $t$ lies in brick $b$. Interchange $b$ with the last brick in $t$,
moving the marked cell along with $b$, and also mark the brick 
(formerly the last brick) that got switched with $t$. This defines
$g(t)$. To get $g^{-1}(s)$ where $s\in S$, find the marked brick in 
$s$ and interchange it with the last brick in $s$ (which has one of
its cells marked). In the following example with $\mu=(3,3,2,2,1)$, 
we mark the cell in $t$ and $g(t)$ by $*$ and mark the switched brick 
in $g(t)$ by gray shading.
\[ g\Bigg(\yt{111{2^*}2344555}\Bigg) 
= \yt{111{*(lightgray) 2}{*(lightgray) 2}{*(lightgray) 2}344{5^*}5} \qedhere \]

\end{proof}

In this fourth application of the general setup of~\S\ref{subsec:setup},
we let $A_n$ be the $P(n)\times C(n)$ matrix with entries
\begin{equation}\label{eq:OBTmatA}
 A_n(\la,\be)=|\obt(\la,\be)|=\sum_{T\in\obt(\la,\be)} 1
\qquad\mbox{for $\la\in P(n)$, $\be\in C(n)$.}  
\end{equation}
For $\be =(\be_1, \ldots, \be_s)\in C(n)$, 
let $Z_{\be}=\be_1(\be_1+\be_2)(\be_1+\be_2+\be_3)
\cdots (\be_1+\be_2+\cdots+\be_s)$ as in~\S\ref{subsec:scale_Z_beta}.
We will prove that $A_n$ has inverse $B_n$ with entries given by
\begin{equation}\label{eq:OBTmatB}
 B_n(\be,\mu)=(-1)^{\ell(\mu)-\ell(\be)}\frac{w_{\be,\mu}}{Z_{\be}}
\qquad\mbox{for $\be\in C(n)$, $\mu\in P(n)$.}  
\end{equation}

\begin{remark}
It can be shown (cf.~\cite{eg-rem}) that $A_n(\la,\be)$ is
the coefficient of $m_{\la}$ in the monomial expansion 
of the power-sum symmetric function $p_{\be}$. Since 
$\sort(\al)=\sort(\be)$ implies $p_{\al}=p_{\be}$,
the matrix $A_n$ satisfies the sorting condition 
of Remark~\ref{rem:sort-mat}. Thus we get a formula for
the power-sum expansion of monomial symmetric functions
by converting $B_n$ in~\eqref{eq:OBTmatB} to the square 
matrix $B_n'$ defined in~\eqref{eq:sort-inv}.
If $\be,\ga\in C(n)$ both sort to a given $\nu\in P(n)$,
then $w_{\be,\mu}=w_{\ga,\mu}$, as can be seen by permuting
the rows of brick tabloids of shape $\be$ to get brick tabloids of shape $\ga$.
Using this and Lemma~\ref{lem:harmonic}(c), we obtain
\[ B_n'(\nu,\mu)=(-1)^{\ell(\mu)-\ell(\nu)}\frac{w_{\nu,\mu}}{z_{\nu}}
\qquad\mbox{for all $\nu,\mu\in P(n)$.}   \]
\end{remark}

\begin{example} For $n=4$, the matrices $A_n$ and $B_n$ are as follows:

\begin{footnotesize}
\[ A_4:\ \bbmatrix{~ & 4 & 31 & 22 & 211 & 13 & 121 & 112 & 1111\cr
4  &  1 & 1 & 1 & 1 & 1 & 1 & 1 & 1 \cr
31 & 0 & 1 & 0 & 2 & 1 & 2 & 2 & 4 \cr
22 &0 & 0 & 2 & 2 & 0 & 2 & 2 & 6 \cr
211&0 & 0 & 0 & 2 & 0 & 2 & 2 & 12 \cr
1111&0 & 0 & 0 & 0 & 0 & 0 & 0 & 24}
% NAL: I didn't yet check B4, but I did check A_4*B_4=I_5.
\quad B_4:\ \bbmatrix{~ & 4 & 31 & 22 & 211 & 1111\cr
4&1 & -1 & -{1}/{2} & 1 & -{1}/{4} \cr
31&0 & {1}/{4} & 0 & -{1}/{4} & {1}/{12} \cr
22&0 & 0 & {1}/{2} & -{1}/{2} & {1}/{8} \cr
211&0 & 0 & 0 & {1}/{12} & -{1}/{24} \cr
13&0 & {3}/{4} & 0 & -{3}/{4} & {1}/{4} \cr
121&0 & 0 & 0 & {1}/{6} & -{1}/{12} \cr
112&0 & 0 & 0 & {1}/{4} & -{1}/{8} \cr
1111& 0 & 0 & 0 & 0 & {1}/{24} } \]
\end{footnotesize}
The square versions are:
\[ A_4':\ \bbmatrix{~ & 4 & 31 & 22 & 211 & 1111\cr
4  &  1 & 1 & 1 & 1  & 1 \cr
31 & 0 & 1 & 0 & 2 & 4 \cr
22 &0 & 0 & 2 & 2  & 6 \cr
211&0 & 0 & 0 & 2  & 12 \cr
1111&0 & 0 & 0 & 0 & 24} \quad
 B_4':\ \bbmatrix{~ & 4 & 31 & 22 & 211 & 1111\cr
4  &  1 & -1 & -1/2 & 1 & -1/4 \cr
31 & 0 & 1 & 0 & -1 & 1/3 \cr
22 &0 & 0 & 1/2 & -1/2  & 1/8 \cr
211&0 & 0 & 0 & 1/2  & -1/4 \cr
1111&0 & 0 & 0 & 0 & 1/24} \]
\end{example}
% NAL: Square versions in Mathematica notation:
%  A={{1,1,1,1,1},
%     {0,1,0,2,4},
%     {0,0,2,2,6},
%     {0,0,0,2,12},
%     {0,0,0,0,24}};
%  B={{1,-1,-1/2,1,-1/4},
%     {0, 1,0,-1,1/3},
%     {0,0,1/2,-1/2,1/8},
%     {0,0,0,1/2,-1/4},
%     {0,0,0,0,1/24}};

%%%%%%%%%%%%%%%%%%%%%%%%%%%%%%%%%%%%%%%%%%%%
\subsection{Recursion for $A_n$}
\label{subsec:OBT-recA}

Fix a partition $\la\in P(n)$ and a positive integer $L$.
Let $S(\la,L)$ be the set of partitions $\ga$ that can be obtained
by replacing one part $i\geq L$ in $\la$ by $i-L$ and sorting the parts.
We can identify $i$ given $\la$ and $\ga$ as the sole member of the
multiset $\la\smin\ga$. Define
$\wt_A(\la,\ga)=m_i(\la)=m_{\la\smin\ga}(\la)$ to be the number of times 
the part value $i$ that got replaced appears in $\la$. 
In this setting, the general recursion~\eqref{eq:A-rec} takes the following
form.

\begin{lemma}\label{lem:A-rec4}
For all $\la\in P(n)$ and $\be\in C(n)$,
\begin{equation}\label{eq:A-rec4}
 |\obt(\la,\be)|=\sum_{\ga\in S(\la,L(\be))} 
    m_{\la\smin\ga}(\la)\left|\obt(\ga,\be^*)\right|.
\end{equation}
\end{lemma}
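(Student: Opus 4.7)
The plan is to establish the recursion by a direct combinatorial decomposition of $\obt(\la,\be)$ according to where the ``last brick'' (the brick labeled $s=\ell(\be)$, which has length $L=L(\be)$) is placed. Since labels within each row weakly increase from left to right, in any $T\in\obt(\la,\be)$ brick $s$ must be the rightmost brick in whatever row it occupies, so it is determined once we know its row. Thus I would partition $\obt(\la,\be)$ by the row index $r$ of $T$ containing brick $s$, noting that $r$ must satisfy $\la_r\geq L$.

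First I would show that for each such $r$ with $\la_r=i\geq L$, removing brick $s$ from row $r$ gives a bijection onto the set of OBTs of content $\be^*$ on the (possibly unsorted) shape $\la'$ obtained from $\la$ by replacing $\la_r=i$ with $i-L$. Then I would observe that OBTs are insensitive to the row-indexing of a shape in the following sense: permuting rows of an OBT yields an OBT of the permuted shape, since the only row-level condition is weakly increasing labels from left to right, which is preserved under such permutations. Applying this to $\la'$ and the permutation that sorts its parts into weakly decreasing order gives a bijection onto $\obt(\ga,\be^*)$, where $\ga\in S(\la,L)$ is the partition obtained by replacing $i$ with $i-L$ in $\la$ and sorting.

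Next I would collect contributions. For a fixed part value $i\geq L$ appearing in $\la$, there are exactly $m_i(\la)$ choices of row $r$ with $\la_r=i$, and all of them produce the same target partition $\ga\in S(\la,L)$, with $\la\smin\ga=(i)$ as a multiset (so $m_{\la\smin\ga}(\la)=m_i(\la)$). Conversely, each $\ga\in S(\la,L)$ arises uniquely from some choice of part value $i$, namely the sole element of $\la\smin\ga$. Summing the equal contributions of $|\obt(\ga,\be^*)|$ across the $m_i(\la)$ rows yields the stated identity
\[ |\obt(\la,\be)|=\sum_{\ga\in S(\la,L(\be))} m_{\la\smin\ga}(\la)\,|\obt(\ga,\be^*)|. \]

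The only delicate point is justifying the ``permute-to-sort'' bijection when $\la$ has repeated part sizes, since then several intermediate shapes $\la'$ (one for each row $r$ of length $i$) all sort to the same $\ga$, and one must confirm that these contributions are genuinely distinct and not double-counted. The cleanest way to handle this is to remember that the row index $r$ is part of the data identifying a given $T\in\obt(\la,\be)$, so the sets of OBTs indexed by different rows $r$ are disjoint subsets of $\obt(\la,\be)$, and the permute-to-sort bijection is applied separately within each subset. This is the step I would write out most carefully; everything else is essentially bookkeeping.
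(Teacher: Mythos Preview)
Your proposal is correct and follows essentially the same approach as the paper: decompose $\obt(\la,\be)$ according to the row containing the last brick (label $s$, length $L$), remove that brick, and sort the resulting shape to land in some $\obt(\ga,\be^*)$. The paper packages this as a single explicit bijection $F:\obt(\la,\be)\to\bigcup_{\ga}[m_i(\la)]\times\obt(\ga,\be^*)$, recording the index $k\in[m_i(\la)]$ of which row of length $i$ held brick $s$ and using a specific sorting rule (move the truncated row to become the \emph{highest} row of its new length $i-L$); it then spells out $F^{-1}$ explicitly. Your version instead partitions by the absolute row index $r$ and invokes row-permutation invariance of OBTs, which is the same idea with slightly less explicit bookkeeping---the ``delicate point'' you flag is exactly what the paper's choice of a canonical sorting rule resolves.
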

\begin{proof}
Informally, we obtain the recursion by removing the largest-labeled 
brick in an OBT of shape $\la$ and content $\be$. 
However, there is some extra complexity since we must sort
the new diagram so it still has partition shape. Formally, we proceed
by defining a bijection $F:\obt(\la,\be)\rightarrow
 \bigcup_{\ga\in S(\la,L(\be))} [\wt_A(\la,\ga)]\times \obt(\ga,\be^*)$.
Write $\be=(\be_1,\ldots,\be_s)=(\be^*,L)$ with $L=L(\be)$.
Given an input $T\in\obt(\la,\be)$, the brick with largest label $s$ and 
size $L$ must be at the end of some row of $\dg(\la)$ of some length $i\geq L$.
Suppose the brick appears in the $k$th highest row of length $i$
in the diagram, where $1\leq k\leq m_i(\la)$.
Delete this brick and its associated cells, and move the truncated row
(along with the remaining bricks in it) to become the highest row
of length $i-L$ in the new diagram. The new diagram is an OBT $T^*$
of some shape $\ga\in S(\la,L)$ such that $\wt_A(\la,\ga)=m_i(\la)$.
Define $F(T)=(k,T^*)$.

To invert $F$, start with any $(k,T^*)$ in the codomain of $F$,
where $T^*$ has shape $\ga\in S(\la,L)$. Here, $i$ must
be the unique part in $\la\smin\ga$, and $\ga$ must have at least
one part of size $i-L$. Put a brick of label $s$ and size $L$ at the end
of the highest row of size $i-L$ in $T^*$. Move this enlarged row up
to become the $k$th highest row among the rows of length $i$ in a new OBT that
has shape $\la$ and content $\be$. It is routine to check that
these steps define the two-sided inverse of $F$.
\end{proof}

\iffalse
which are obtained by replacing some part $\la_i$ by $\la_i - L$. We can construct an ordered brick tabloid of shape $\la$ and content $\be$ by first starting with an ordered brick tabloid on the shape $\ga\in S(\la, L(\be)$ with content $\be*$ and then adding a brick labeled $\ell(\be)$ in the appropriate row.  For any $\ga\in S(\la,L)$, define $\#(\la \to\ga)$ to be the number of ways we can obtain $\ga$ from $\la$. If $\ga^i\in S(\la, L)$ is a partition such that $\la\smin\ga = (i)$, then as we can remove $L$ from each part of size $i$ to obtain $\ga$, we have 
\begin{equation}\label{eq:4numofways}
\#(\la\to\ga) = m_i(\la)
\end{equation}
This gives us the following recursion
\begin{equation}\label{eq:4recob}
|\obt(\la, \be)| = \sum\limits_{\ga\in S(\la, L)} \#(\la\to\ga)\cdot |\obt(\ga, \be^*)|
\end{equation}
\fi

\begin{example}
For $\la = (3,3,2)$ and $\be = (2,1,3,2)$, the set $S(\la,L(\be))$
contains two partitions: $\ga=(3,2,1)$ with $\wt_A(\la,\ga)=m_3(\la)=2$,
and $\de=(3,3)$ with $\wt_A(\la,\de)=m_2(\la)=1$.
The four objects in $\obt(\la,\be)$ are shown here.
\[ T_1=\yt{244,333,11},\quad T_2=\yt{333,244,11},\quad
T_3=\yt{112,333,44},\quad T_4= \yt{333,112,44} \]
On the other hand, $\obt(\ga, \be^*)=\{T_5\}$ 
and $\obt(\de,\be^*)=\{T_6,T_7\}$, where
\[ T_5=\yt{333,11,2},\quad T_6=\yt{112,333},\quad T_7=\yt{333,112}. \]
In this example, the bijection $F$ acts as follows:
\[ F(T_1)=(1,T_5),\quad F(T_2)=(2,T_5),\quad 
   F(T_3)=(1,T_6),\quad F(T_4)=(1,T_7). \]
\end{example}

%%%%%%%%%%%%%%%%%%%%%%%%%%%%%%%%%%%%%%%%%%%%
\subsection{Recursion for $B_n$}
\label{subsec:OBT-recB}

For $\mu\in P(n)$ and $L>0$, define 
$T(\mu,L)=\{\de\in P(n-L): \de\subseteq\mu\}$, which is the set 
of partitions of $n-L$ that can be obtained from $\mu$ by 
removing some parts that sum to $L$. In other words, 
for each $\de\in T(\mu,L)$, $\eps=\mu\smin\de$ is a partition of $L$
with $\eps\subseteq\mu$. For such $\de$, define
\begin{equation}\label{eq:wtB-4}
 \wt_B(\mu,\de)=\frac{(-1)^{\ell(\mu)-\ell(\de)-1}W_{\mu\smin\de}}{|\mu|}.
\end{equation}
In this setting, to verify the general recursion~\eqref{eq:B-rec},
we must prove: for $\be\in C(n)$ and $\mu\in P(n)$,
\begin{equation}\label{eq:B-rec4}
(-1)^{\ell(\mu)-\ell(\be)}\frac{w_{\be,\mu}}{Z_{\be}}
=\sum_{\de\in T(\mu,L(\be))} (-1)^{\ell(\mu)-\ell(\de)-1+\ell(\de)-\ell(\be^*)}
   \frac{W_{\mu\smin\de}w_{\be^*,\de}}{|\mu|Z_{\be^*}}. 
\end{equation}
The power of $-1$ is the same on both sides since $\ell(\be)=\ell(\be^*)+1$.
The denominators match as well (independent of $\de$) since
$|\mu|Z_{\be^*}=nZ_{\be^*}=|\be|Z_{\be^*}=Z_{\be}$. 
Since $\mu\smin\de$ is a partition of $L(\be)$, we are reduced to checking
\begin{equation}\label{eq:B-rec4b}
 w_{\be,\mu}= \sum_{\de\in T(\mu,L)} w_{(L),\mu\smin\de}w_{\be^*,\de}
\quad\mbox{ where $L=L(\be)$.} 
\end{equation}
This follows by mapping a brick tabloid $T$ counted by $w_{\be,\mu}$
to the pair of brick tabloids $(T',T^*)$, where $T'$ is the last row
of $T$ and $T^*$ consists of all higher rows of $T$. Note that 
$T^*$ is a brick tabloid of shape $\be^*$ and type $\de$ for some
$\de\in T(\mu,L)$, whereas $T'$ must then be a brick tabloid of
shape $(L)$ and type $\mu\smin\de$. Taking products of the lengths
of the rightmost bricks, we have $\wt(T)=\wt(T')\wt(T^*)$.
Thus the map $T\mapsto (T',T^*)$ is a weight-preserving bijection,
and~\eqref{eq:B-rec4b} follows.

\subsection{Proof of the Local Identity}
\label{subsec:prove-local4}

In this application, the local identity~\eqref{eq:gen-local}
takes the following form.
%We use the recursions established in the previous section and prove the necessary local identity.

\begin{theorem}\label{thm:local4}
Given $n>0$ and partitions $\la,\mu\in P(n)$, let $G(\la,\mu)$ 
be the set of partitions $\ga$ that can be obtained by decreasing 
one part of $\la$ by some $L>0$ (then sorting) and by removing one or more
parts from $\mu$. Then
\begin{equation}\label{eq:local4}
\sum\limits_{\ga\in G(\la,\mu)} \dfrac{ m_{\la\smin\ga}(\la)
(-1)^{\ell(\mu)-\ell(\ga)-1} W_{\mu\smin\ga}}{n} = \chi(\la = \mu).
\end{equation}
Therefore (by Theorem~\ref{thm:main}), the rectangular matrix $A_n$ of
OBT counts has a right-inverse $B_n$ defined by~\eqref{eq:OBTmatB}.
\end{theorem}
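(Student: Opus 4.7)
The plan is to unfold the definition of $G(\la,\mu)$ using multiset bookkeeping: any $\ga \in G(\la,\mu)$ arises from $\la$ by decreasing a single part $i$ by some $L > 0$ (so $\la\smin\ga = (i)$) and simultaneously from $\mu$ by removing some partition $\eps$ of $L$ (so $\mu\smin\ga = \eps$). Writing $A = \la\smin\mu$ and $B = \mu\smin\la$ for the multiset differences, the simultaneous condition $\la-(i)+(i-L) = \mu-\eps$ rewrites as $\mu = (\la\smin(i)) \uplus (i-L) \uplus \eps$, which forces $\la\smin(i) \subseteq \mu$. This turns out to be very restrictive on $A$.

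I would first handle the trivial case $\la = \mu$. Here $\la\smin\ga = (i)$ and $\mu\smin\ga = \eps$ coincide, so $\eps = (i)$ and $L = i$; thus $\ga$ ranges over the $\la\smin(i)$ as $i$ runs over distinct parts of $\la$. Each such $\ga$ satisfies $\ell(\mu)-\ell(\ga)-1 = 0$ and contributes $\frac{m_i(\la)\cdot W_{(i)}}{n} = \frac{i\,m_i(\la)}{n}$, so the sum telescopes to $\sum_i i\,m_i(\la)/n = |\la|/n = 1$.

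For $\la \neq \mu$ (with $|\la|=|\mu|$), I would run a case analysis on $A$. If $A = \emptyset$, then $\la \subseteq \mu$ combined with $|\la| = |\mu|$ forces $\la = \mu$, contradiction. If $A$ contains two distinct parts, or a single value with multiplicity at least $2$, then no $i$ can satisfy $\la\smin(i) \subseteq \mu$, so $G(\la,\mu) = \emptyset$ and the sum is vacuously $0$. The only nontrivial sub-case is $A = (j)$ for one part $j$, which forces $i = j$ and $|B| = j$; moreover $\la \neq \mu$ rules out $B = (j)$, so $B$ has $\ell(B) \geq 2$ parts.

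In this remaining sub-case, a short computation gives $\mu\smin\ga = B - (j-L)$ (with $(0) = \emptyset$), so the valid $\ga$'s are indexed either by $L = j$ (giving $\eps = B$, $\ell(\ga) = \ell(\la)-1$) or by $L = j-k$ for each distinct part $k$ of $B$ (giving $\eps = B\smin(k)$, $\ell(\ga) = \ell(\la)$). Using $\ell(\mu) = \ell(\la)-1+\ell(B)$, the two corresponding exponents $\ell(\mu)-\ell(\ga)-1$ evaluate to $\ell(B)-1$ and $\ell(B)-2$, so after combining signs the entire sum collapses to
\[ \frac{m_j(\la)(-1)^{\ell(B)}}{n}\Bigl[\sum_{k \in B} W_{B\smin(k)} - W_B\Bigr]. \]
Since $B$ has at least two parts, Lemma~\ref{lem:local4main}(a) applied to $B$ sends the bracketed expression to zero, completing the proof. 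The main obstacle is the multiset bookkeeping needed to justify that $A$ must be $\emptyset$ or a single part $(j)$, and to correctly derive the parametrization of $G(\la,\mu)$ in the nontrivial sub-case; once this is in place, Lemma~\ref{lem:local4main}(a) absorbs all of the actual combinatorial content.
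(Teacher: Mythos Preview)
Your proof is correct and follows essentially the same approach as the paper's: both split into the cases $\la=\mu$ and $\la\neq\mu$, in the latter case show that $\la\smin\mu$ must be a single part (your $A=(j)$, the paper's $(i)$), parametrize $G(\la,\mu)$ by $L=j$ together with $L=j-k$ for $k\in B=\mu\smin\la$, and then reduce the resulting signed sum to the identity $W_B=\sum_{k\in B}W_{B\smin(k)}$ of Lemma~\ref{lem:local4main}(a). The only difference is cosmetic: your argument for $L=i$ in the diagonal case uses $\la\smin\ga=\mu\smin\ga$ directly, whereas the paper compares lengths.
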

\begin{proof}
First consider the case $\la=\mu$.
For each part $i\in\la$, let $\ga^i=\la\smin(i)$ be $\la$ with a single
copy of the part $i$ deleted. Evidently $\ga^i\in G(\la,\la)$ for all
$i\in\la$, and we claim these are the only elements in $G(\la,\la)$.
To see why, fix $\ga\in G(\la,\la)$. On one hand, we obtain $\ga$
from $\la$ by decreasing some $\la_j=i$ by some $L>0$. On the other hand,
$\ell(\ga)<\ell(\mu)=\ell(\la)$. If $L<i$, then we would have
$\ell(\ga)=\ell(\la)$, which is impossible. So $L=i$ and $\ga=\ga^i$. 
The left side of~\eqref{eq:local4} becomes
$\sum_{i\in\la} \frac{m_i(\la)(-1)^{1-1}W_{(i)}}{n}
 =\sum_{i\in\la} \frac{im_i(\la)}{n}=1$, since $|\la|=n$.

For the case $\la\neq \mu$, identity~\eqref{eq:local4} holds vacuously
when $G(\la,\mu)=\emptyset$, so assume $G(\la,\mu)$ is nonempty.
This means there is a way to go from $\la$ to $\mu$ by first 
subtracting some $L>0$ from a single part in $\la$ to get $\ga$,
then merging some partition $\nu$ of $L$ with $\ga$ to reach $\mu$.
In this situation, we claim $\la\smin\mu$ must equal the single-part
partition $(i)$ for some $i$. 
On one hand, $\la\smin\mu$ cannot be empty since $|\la|=|\mu|$ 
and $\la\neq\mu$. On the other hand, suppose the multiset $\la\smin\mu$ 
contained two parts $a$ and $b$ (where $a=b$ could occur). 
%This is possible only when $m_b(\lambda) > m_b(\mu)$. We now decrease $a$ by $L$ in $\lambda$ to obtain $\gamma$, so $m_b(\gamma) = m_b(\lambda)+1$ when $a= b+L$ and $m_b(\gamma) = m_b(\lambda)$ otherwise. Merging $\gamma$ with $\nu$ to obtain $\mu$ can only increase the multiplicity of $b$, which means $m_b(\mu) \geq m_b(\gamma) \geq m_b(\lambda)$ which is a contradiction. We can argue similarly for $b$.
Decrementing the part $a$ by $L$ and merging with $\nu$ would produce a 
partition with too many copies of $b$ (compared to $\mu$).
Decrementing the part $b$ by $L$ and merging with $\nu$ would produce a 
partition with too many copies of $a$ (compared to $\mu$).
This proves the claim.
%ak: added more details to the above explanation.
%NAL: I found the extra details a bit more confusing than the original.
%     But I rewrote the original slightly trying to explain more.

With the claim $\la\smin\mu=(i)$ established, 
we can describe all elements of $G(\la,\mu)$. 
Write $\mu\smin\la=\rho$, which must be a partition of $i$ different
from $(i)$ since $|\la|=|\mu|$ but $\la\neq\mu$.
To get an element of $G(\la,\mu)$,
we must decrease one of the copies of $i$ in $\la$ by some amount $L$
since that is the only way to get rid of the extra copy of $i$.
We could choose $L=i$, leading to $\ga=\la\smin(i)=\la\cap\mu\in G(\la,\mu)$,
then merge $\ga$ with $\rho$ to reach $\mu$.
Or we could choose any $L<i$ such that $i-L\in\rho$,
leading to $\ga=(\la\cap\mu)\uplus(i-L)\in G(\la,\mu)$,
then merge $\ga$ with the remaining parts in $\rho$ to reach $\mu$.
Putting all of this information into the left side of~\eqref{eq:local4}, 
the numerator becomes
\[ m_i(\la)(-1)^{\ell(\rho)-1}W_{\rho}
 +\sum_{L:\ i-L\in\rho} 
m_i(\la)(-1)^{\ell(\rho)-2}W_{\rho\smin(i-L)}
=m_i(\la)(-1)^{\ell(\rho)-1}\left[W_{\rho}-\sum_{j\in\rho} W_{\rho\smin(j)}
\right].\]
By Lemma~\ref{lem:local4main}(a), the term in brackets is zero,
so~\eqref{eq:local4} holds in this case.
\end{proof}

\begin{example}
For $\la=(5,2,2,1)$ and $\mu=(3,2,2,1,1,1)$, we have
$\la\smin\mu=(5)$, $\rho=\mu\smin\la=(3,1,1)$, and
$G(\la,\mu)=\{(2,2,1),(3,2,2,1),(2,2,1,1)\}$. 
The left side of~\eqref{eq:local4} is
\[ \frac{1(-1)^2W_{(3,1,1)}+1(-1)^1W_{(1,1)}+1(-1)^1W_{(3,1)}}{10}
  =\frac{5-1-4}{10}=0. \]
\end{example}

%%%%%%%%%%%%%%%%%%%%%%%%%%%%%%%%%%%%%%%%%%%%%%%%%%%%%%%%%%%%%%%%%%%%%%%%%%%%%
\section{Automatically Constructing Bijective Proofs of $AB=I$}
\label{sec:automatic-bij}

Returning to our general framework, it is often the case that the
entries in the matrices $A_n$ and $B_n$ count signed, weighted collections
of combinatorial objects. Suppose we have bijective proofs of 
recursion~\eqref{eq:A-rec} for the entries of $A_n$,
recursion~\eqref{eq:B-rec} for the entries of $B_n$, and 
the local identities~\eqref{eq:gen-local}. 
Then we can convert the algebraic proof of Theorem~\ref{thm:main}
to a bijective proof of the matrix identities $A_nB_n=I_{R(n)}$.
We sketch the general approach here and provide two concrete examples
(revisiting Applications~1 and~2) in the following subsections.

Suppose $A(\la,\be)$ is the sum of signed, weighted objects
in a set $\mcA_{\la,\be}$ for each $\la\in R(n)$ and $\be\in C(n)$.
Suppose $B(\be,\mu)$ is the sum of signed, weighted objects
in a set $\mcB_{\be,\mu}$ for each $\be\in C(n)$ and $\mu\in R(n)$.
By definition of matrix multiplication,
the $\la,\mu$-entry of $AB$ is the signed, weighted sum of all ordered 
pairs $(S,T)$ where $S\in\mcA_{\la,\be}$ for some $\be\in C(n)$,
  and $T\in\mcB_{\be,\mu}$ for the same choice of $\be$.
Let $\mcP_{\la,\mu}$ be the set of all such pairs.
We seek to construct sign-reversing, weight-preserving involutions
$\mcI_{\la,\mu}:\mcP_{\la,\mu}\rightarrow\mcP_{\la,\mu}$
where $\mcI_{\la,\mu}$ has no fixed points for all $\la\neq\mu$ in $R(n)$,
  and $\mcI_{\la,\la}$ has exactly one fixed point (with signed weight $+1$)
for all $\la\in R(n)$.

Tracing through the argument leading to~\eqref{eq:gen-intmed}
produces the following informal recursive description of $\mcI_{\la,\mu}$.
Given $(S,T)\in\mcP_{\la,\mu}$ with $S\in\mcA_{\la,\be}$
and $T\in\mcB_{\be,\mu}$ for some $\be=(\be^*,L(\be))$,
use the known bijective proofs of~\eqref{eq:A-rec} and~\eqref{eq:B-rec}
to remove some local structures from $S$ and $T$ to get
smaller objects $S^*\in\mcA_{\ga,\be^*}$ and $T^*\in\mcB_{\be^*,\de}$
for some $\ga,\de\in R(n-L(\be))$. In the case $\ga\neq\de$,
recursively apply $\mcI_{\ga,\de}$ to $(S^*,T^*)$ to find
the matching object $(S',T')$ of opposite sign. In the new object,
$\be^*$ may be replaced by some new composition $\be'\in C(n-L(\be))$.
Next, restore to $S'$ and $T'$ the same local structures that were removed 
from $S$ and $T$ to reach the final output $\mcI_{\la,\mu}(S,T)$.

In the case $\ga=\de$, we may sometimes still be able to recursively
apply $\mcI_{\ga,\de}$ to obtain a cancellation as in the first case.
However, it may be that $(S^*,T^*)$ is the fixed point for $\mcI_{\ga,\de}$.
There are two subcases here. If $\la\neq\mu$, then we use the given
bijective proof of the local identity to obtain a matching object of
opposite sign that will cancel with $(S,T)$. If $\la=\mu$, the local bijection
either yields another cancellation or tells us that $(S,T)$ is the
fixed point for $\mcI_{\la,\la}$.

Frequently, we can unroll the recursive description of $\mcI_{\la,\mu}$ to 
obtain an iterative algorithm to compute this map. 
Suppose we can explicitly describe the unique fixed point for
each $\mcI_{\la,\la}$; call this fixed point the \emph{survivor of shape $\la$}.
Given a non-survivor $(S,T)\in\mcP_{\la,\mu}$, 
repeatedly strip off the last local
structures from $S$ and $T$ until reaching an intermediate object
that is the survivor for some shape $\ga\in R(k)$. 
Apply the bijective proof of the local identity,
using the most recently removed local structure
to determine what $\la$ and $\mu$ to use in~\eqref{eq:gen-local}.
This bijection replaces the survivor of shape $\ga$ 
(and the next local structure just outside it)
by the survivor of some shape $\tilde{\ga}\in R(k)$ 
(and suitable new local structure just outside it).
To finish, act on this object by restoring the rest of the local structures 
that were removed from $S$ and $T$ to get the object $\mcI_{\la,\mu}(S,T)$
that cancels with $(S,T)$ in $\mcP_{\la,\mu}$.
% NAL: added new sentence here.

Sometimes (as in Application~2 below), this approach can be 
generalized to the case where there is more than one surviving fixed point
in $\mcI_{\la,\la}$. This situation might occur if we need to rescale
one of the matrices $A_n$ or $B_n$ to ensure all entries are integers.

%%%%%%%%%%%%%%%%%%%%%%%
\subsection{Application~1: Bijective Inversion of the Kostka Matrix.}
\label{subsec:invert-Kostka}

We illustrate the general construction by developing a bijective proof
of $A_nB_n=I_{P(n)}$, where $A_n$ is the rectangular Kostka matrix from
Section~\ref{sec:kostka}. In this application, $\mcA_{\la,\be}$ is 
the (unsigned, unweighted) set $\ssyt(\la,\be)$ of semistandard Young
tableaux of shape $\la$ and content $\be$; and $\mcB_{\be,\mu}$ is
the (signed, unweighted) set $\srht(\mu,\be)$ of special rim-hook tableaux 
of shape $\mu$ and content $\be$. The set $\mcP_{\la,\mu}$ consists of
all pairs $(S,T)$ where $S\in\ssyt(\la,\be)$ and $T\in\srht(\mu,\be)$
for some composition $\be$. For each partition $\la$, let
$S_{\la}$ be the filling of $\dg(\la)$ where every cell in row $i$
contains the value $i$. The object $S_{\la}$ belongs to both
$\ssyt(\la,\la)$ and $\srht(\la,\la)$ and has positive sign.
% NAL: Sketch of proof of next claim: Say $(S,T)\in\mcP_{\la,\la}$.
%  Look at the special rim-hook labeled $1$ in $T$. This SRH cannot have
%  length $>\la_1$ since no horizontal strip in $\dg(\la)$ could have
%  size $>\la_1$. If the first SRH has length $=\la_1$ and goes straight across
%  row $1$ of the diagram, then the first horizontal strip in $S$ must
%  do the same thing. Then the claim follows by induction on the number
%  of rows. The only other case is that the first SRH does not go all the
%  way to the end of row $1$. Then some SRH must have length $>\la_1$ to
%  cover the rightmost cell in the top row, again leading to a contradiction
%  since no horizontal strip in $S$ could be that long.
It is routine to check that $\mcP_{\la,\la}$ consists of the sole
object $(S_{\la},S_{\la})$, which is the survivor for shape $\la$ 
in this application. For example, the survivor for $\la=(5,3,3,2)$ is
\[ \left( 
  \tableau{1&1&1&1&1 \\ 2&2&2 \\ 3&3&3 \\ 4&4}_,\quad
  \tableau{1&1&1&1&1 \\ 2&2&2 \\ 3&3&3 \\ 4&4} \right). \]

The involution $\mcI_{\la,\mu}$ acts on input $(S,T)\in\mcP_{\la,\mu}$ 
as follows.
If $\la=\mu$ and $(S,T)$ is the survivor for $\la$, then this is a fixed point.
Otherwise let $\be=(\be_1,\ldots,\be_\ell)$ be the content composition for $S$ 
and $T$.  Remove the outermost horizontal strip from $S$ 
(meaning the $\be_\ell$ cells in the filling of $\dg(\la)$ labeled $\ell$),
and remove the last special rim-hook from $T$ 
(meaning the $\be_\ell$ cells in the filling of $\dg(\mu)$ labeled $\ell$).
Do this repeatedly until reaching a survivor object of some shape $\ga$,
where $\ga$ might be empty. Suppose this survivor was reached after 
removing the cells labeled $k$ from $S$ (these cells forming
a horizontal strip $\eta$ of size $\be_k$) and
removing the cells labeled $k$ from $T$ (these cells forming 
a signed special rim-hook $\rho$ of size $\be_k$).
Let $\bla$ be the partition consisting of the cells in $\ga$ and $\eta$,
and let $\bmu$ be the partition consisting of the cells in $\ga$ and $\rho$.
Then the set $G(\bla,\bmu)$ (defined in Theorem~\ref{thm:local1})
is nonempty. The proof of that theorem shows that $G(\bla,\bmu)$ consists
of exactly two oppositely-signed objects and describes how to go from
one of these objects to the other. Let $\tilde{\ga}$ be the other object
appearing with $\ga$ in $G(\bla,\bmu)$. 
To make $(\tilde{S},\tilde{T}) =\mcI_{\la,\mu}(S,T)$,
start with the unique survivor of shape $\tilde{\ga}$.
Using the next label $\tilde{k}$ not appearing in $\tilde{\ga}$,
add a horizontal strip to $\tilde{S}$ to reach $\bla$ and
add a special rim-hook to $\tilde{T}$ to reach $\bmu$
(which is possible by definition of $G(\bla,\bmu)$).
Then restore the previously removed horizontal strips and
special rim-hooks of lengths $\be_{k+1},\ldots,\be_{\ell}$
to the fillings $\tilde{S}$ and $\tilde{T}$ (respectively)
in the same positions they occupied in $S$ and $T$.
Here the labels (originally $k+1,k+2,\ldots$) of the restored
structures get renumbered to be $\tilde{k}+1,\tilde{k}+2,\ldots$.

\begin{example}
Here are three examples of the action of $\mcI_{\la,\mu}$. First,
\begin{equation}\label{eq:ER-map1}
 \left(
\tableau{1&1&3\\
         2&2&4\\
         4&4}_,\quad
\tableau{1&1\\
         2&2\\
         3&4\\
         4&4}\,\,\right)
\quad \stackrel{\mcI_{\la,\mu}}{\longmapsto} \quad
\left(
\tableau{1&1&2\\
         2&2&3\\
         3&3}_,\quad
\tableau{1&1\\
         2&2\\
         2&3\\ 
         3&3}\,\,\right). 
\end{equation}
In this example, we remove the $4$s from the input objects and 
get a non-survivor.  Then we remove the $3$s from both objects
and reach the survivor for shape $\ga=(2,2)$. 
We compute $\bla=(3,2)$, $\bmu=(2,2,1)$, $\tilde{\ga}=(2)$
from the proof of Theorem~\ref{thm:local1}.
Starting with the survivor for $\tga$, we fill
$\dg(\bla)\setminus\dg(\tilde{\ga})$ with a horizontal strip of three $2$s, 
and we fill
$\dg(\bmu)\setminus\dg(\tilde{\ga})$ with a special rim-hook of three $2$s.
We then restore the last horizontal strip and special rim-hook,
renumbered to contain $3$s instead of $4$s.
The original content $\be=(2,2,1,3)$ changes 
to new content $\tilde{\be}=(2,3,3)$. 

For our second example, we compute
\begin{equation}\label{eq:ER-map2}
\left( \tableau{
 1&1&1&1&1&4&4\\
 2&2&2&4&4&6&6\\
 3&3&3&5\\
 4&4&6}_,\quad \tableau{
 1&1&1&1&1 \\
 2&2&2&4 \\
 3&3&3&4 \\
 4&4&4&4 \\
 5&6 \\
 6&6}\,\,\right)
\quad \stackrel{\mcI_{\la,\mu}}{\longmapsto} \quad
\left( \tableau{
 1&1&1&1&1&4&4\\
 2&2&2&2&4&6&6\\
 3&3&3&5\\
 4&4&6}_,\quad \tableau{
 1&1&1&1&1 \\
 2&2&2&2 \\
 3&3&3&4 \\
 4&4&4&4 \\
 5&6 \\
 6&6}\,\,\right).
\end{equation}
Here, $\be=(5,3,3,6,1,3)$, $\ga=(5,3,3)$, 
$\bla=(7,5,3,2)$, $\bmu=(5,4,4,4)$, $\tilde{\ga}=(5,4,3)$,
and $\tilde{\be}=(5,4,3,5,1,3)$.

For our third example, we compute
\begin{equation}\label{eq:ER-map3}
\left( \tableau{
 1&1&1&1&1&3&4\\
 2&2&2&3&3&6&6\\
 3&3&4&5\\
 4&4&6}_,\quad \tableau{
 1&1&1&1&1 \\
 2&2&2&3 \\
 3&3&3&3 \\
 4&4&4&4 \\
 5&6 \\
 6&6}\,\,\right)
\quad \stackrel{\mcI_{\la,\mu}}{\longmapsto} \quad
\left( \tableau{
 1&1&1&1&1&3&4\\
 2&2&2&2&3&6&6\\
 3&3&4&5\\
 4&4&6}_,\quad \tableau{
 1&1&1&1&1 \\
 2&2&2&2 \\
 3&3&3&3 \\
 4&4&4&4 \\
 5&6 \\
 6&6}\,\,\right).
\end{equation}
Here, $\be=(5,3,5,4,1,3)$, $\ga=(5,3)$, 
$\bla=(6,5,2)$, $\bmu=(5,4,4)$, $\tilde{\ga}=(5,4)$,
and $\tilde{\be}=(5,4,4,4,1,3)$.
\end{example}

\begin{remark}
The involutions $\mcI_{\la,\mu}$ constructed here are \emph{canonical}
in the technical sense that they do not rely on arbitrary choices.
On one hand, each set $\mcP_{\la,\la}$ contains a unique survivor
with no cancellation needed. On the other hand, for each $\la\neq\mu$
such that the set $G(\la,\mu)$ is nonempty, this set contains exactly
one positive object and exactly one negative object. Thus the involution
$\ga\leftrightarrow\tilde{\ga}$ on this set is uniquely determined.
Our general framework assembles the global maps $\mcI_{\la,\mu}$ from 
these canonical local ingredients without requiring any further choices.
\end{remark}

\begin{remark}
E\u{g}ecio\u{g}lu and Remmel~\cite{inv-kostka} gave a bijective proof
of $K_nK_n^{-1}=I_{P(n)}$ for the square $P(n)\times P(n)$ Kostka
matrices $K_n$. Their bijections differ from ours in several respects.
First, as a notational matter, they write partition diagrams in French
notation (longest row at the bottom) and use line segments rather than
numbered cells to display special rim-hooks. Second, their proof uses
a slightly different combinatorial model for the $\la,\mu$-entry of 
$K_nK_n^{-1}$ compared to our set $\mcP_{\la,\mu}$. Specifically,
they consider pairs $(S,T)$ where $S$ is a SSYT of shape $\la$
and $T$ is an SRHT of shape $\mu$ such that, for each $k\geq 1$,
the horizontal strip in $S$ filled with the label $k$ has size
equal to the length of the special rim-hook in $T$ starting in the $k$th row
of $\dg(\mu)$; this length is zero if there is no such rim-hook. 
In particular, the content of $S$ is a weak composition that might have 
some parts equal to zero. The validity of their model rests on the
fact that $|\ssyt(\la,\al)|=|\ssyt(\la,\be)|$ for any weak compositions
$\al,\be$ that are rearrangements of each other. Their involution requires
(as a subroutine) a bijection that interchanges the frequencies of 
$i$ and $i+1$ in a SSYT. In contrast, our bijections for
the rectangular Kostka matrices do not rely on these ingredients.

E\u{g}ecio\u{g}lu and Remmel~\cite[pp. 72--73]{inv-kostka} give the 
following example of their involution (denoted here by $\mcER_{\la,\mu}$;
we use our diagram conventions to facilitate comparison):
\begin{equation}%\label{eq:ER-map1a}
 \left(
\tableau{1&1&3\\
         2&2&4\\
         4&4}_,\quad
\tableau{1&1\\
         2&2\\
         3&4\\
         4&4}\right)
\quad \stackrel{\mcER_{\la,\mu}}{\longleftrightarrow} \quad
\left(
\tableau{1&1&3\\
         3&3&4\\
         4&4}_,\quad
\tableau{1&1\\
         3&3\\
         3&4\\ 
         4&4}\right). 
\end{equation}
We use the label $k$ for the special rim-hook starting in row $k$ from the top.
Their output has content $(2,0,3,3)$, which we do not allow. But comparing
to~\eqref{eq:ER-map1}, we observe that the actions of the maps $\mcI_{\la,\mu}$ 
and $\mcER_{\la,\mu}$ on this input essentially agree, 
after we adjust for different conventions regarding the content.
However, there are inputs on which the two involutions do not agree even in
this extended sense. For example, $\mcER_{\la,\mu}$ matches the two objects
on the left sides of~\eqref{eq:ER-map2} and~\eqref{eq:ER-map3} with
each other, as well as matching the two objects on the right sides.
%This does not violate the ``canonical'' nature of our bijection,
%since their version is not built up via the local method.
\end{remark}

%%%%%%%%%%%%%%%%%%%%%%%
\subsection{Bijective Matrix Inversion in Application~2.}
%: Survivors and Inversion Tables}
\label{subsec:appl2-bij}

Getting a bijective proof of $A_nB_n=I_{P(n)}$ in Application~2
is more challenging, since the $B_n$ matrix defined in~\S\ref{sec:RHT}
has fractional entries. As a first step, we introduce the rescaled
matrix $\bB_n=n!B_n$ and prove $A_n\bB_n=n!I_{P(n)}$ instead.
In this case, $\mcA_n(\la,\be)$ is the signed set $\rht(\la,\be)$
of rim-hook tableaux of shape $\la$ and content $\be$.
Note that $\bB_n(\be,\mu)=\sum_{T\in\rht(\mu,\be)} \frac{n!}{Z_{\be}}\sgn(T)$.
Using Lemma~\ref{lem:harmonic}(b), we may take $\mcB_n(\be,\mu)$ to be 
the signed set consisting of pairs $(T,\si)$ such that $T\in\rht(\mu,\be)$ and 
$\si\in S_n$ has $\cycC(\si)=\be$. For $\la,\mu\in P(n)$,
$\mcP_{\la,\mu}$ is the set of triples $(S,T,\si)$ where, for some
composition $\be$, $S\in\rht(\la,\be)$, $T\in\rht(\mu,\be)$, $\si\in S_n$,
and $\cycC(\si)=\be$. The sign of $(S,T,\si)$ is $\sgn(S)\sgn(T)$.

The involutions $\mcI_{\la,\mu}$ must have no fixed points when $\la\neq\mu$,
but $\mcI_{\la,\la}$ should have $n!$ positive fixed points for all $\la\in P(n)$. 
The natural set of fixed points for $\mcI_{\la,\la}$ turns out to be the set 
of objects in $\mcP_{\la,\la}$ of the form $(S,S,\si)$; call this set
$\Surv_{\la}$. The next theorem proves there really are $n!$ such objects.
In fact, we require a bijective strengthening of this enumerative result
and a variation arising in connection with the local 
identity~\eqref{eq:local2} (rescaled by $n!$). For $n\geq 1$, define
\[ \CS_n=[n]\times [n-1]\times\cdots\times[3]\times[2]\times[1]
  =\{(c_n,\ldots,c_2,c_1): 1\leq c_k\leq k\mbox{ for }n\geq k\geq 1\}. \]
Clearly, $|\CS_n|=n!$. Elements of $\CS_n$ are called \emph{choice sequences}.

\begin{theorem}\label{thm:novel-n!}
(a) For each $\la\in P(n)$, there is a bijection 
$F_{\la}:\CS_n\rightarrow\Surv_{\la}$. 
\\ (b) Suppose we are given $\mu\in P(n)$ and a fixed (unlabeled)
rim-hook $\rho$ that may be removed from the border of $\dg(\mu)$ to leave 
a smaller partition shape.
Let $\Surv_{\mu,\rho}$ be the set of pairs $(T,\si)$ where $T$ is 
a RHT of shape $\mu$ whose last rim hook is $\rho$, 
and $\cycC(\si)$ equals the content of $T$.
There is a bijection $F_{\mu,\rho}:\CS_{n-1}\rightarrow \Surv_{\mu,\rho}$.
\end{theorem}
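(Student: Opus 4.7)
My plan is to prove parts (a) and (b) simultaneously by strong induction on $n$. The base case $n = 0$ for (a) is immediate: $\la = \emptyset$, and both $\CS_0$ and $\Surv_\emptyset$ consist of a single (empty) object. For the inductive step at $n \geq 1$, I first establish (b)$_n$ using only the induction hypothesis for (a) at strictly smaller indices, and then deduce (a)$_n$ from (b)$_n$.

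For (b)$_n$, fix $\mu \in P(n)$ and a removable rim-hook $\rho$ of $\dg(\mu)$; write $L = |\rho|$ and $\mu' = \mu \smin \rho \in P(n-L)$. Given $(c_{n-1}, c_{n-2}, \ldots, c_1) \in \CS_{n-1}$, I read off the leftmost $L - 1$ coordinates to specify the cycle of $\sigma$ containing $1$: interpret $c_{n-1} \in [n-1]$ as choosing $a_2 \in [n] \smin \{1\}$, then $c_{n-2} \in [n-2]$ as choosing $a_3 \in [n] \smin \{1, a_2\}$, and so on, yielding a cycle $(1, a_2, \ldots, a_L)$. Feed the tail $(c_{n-L}, \ldots, c_1) \in \CS_{n-L}$ into $F_{\mu'}$ (available by induction) to obtain $(T^*, \sigma^*) \in \Surv_{\mu'}$. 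Assemble $T$ by attaching $\rho$ to $T^*$ as the new largest-labeled rim-hook, and assemble $\sigma$ by combining the cycle $(1, a_2, \ldots, a_L)$ with the image of $\sigma^*$ under the order-preserving bijection $[n-L] \to [n] \smin \{1, a_2, \ldots, a_L\}$. Since the cycle containing $1$ appears last in canonical cycle notation and order-preserving relabeling preserves canonical notation on the remaining cycles, $\cycC(\sigma)$ equals the content of $T$. Each step is manifestly invertible, yielding the desired bijection $\CS_{n-1} \to \Surv_{\mu,\rho}$.

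For (a)$_n$ with $\la \in P(n)$, I invoke the bijection from the $\la = \mu$ case of Theorem~\ref{thm:local2}, which pairs the $n$ cells of $\dg(\la)$ with the $n$ removable rim-hooks of $\dg(\la)$. Fixing a canonical order on cells (e.g., reading rows top to bottom, left to right), interpret $c_n \in [n]$ as choosing one cell and hence a rim-hook $\rho$. Then feed the tail $(c_{n-1}, \ldots, c_1)$ through $F_{\la, \rho}$ from (b)$_n$ to obtain $(T, \sigma) \in \Surv_{\la, \rho}$, and set $F_\la(c_n, \ldots, c_1) = (T, T, \sigma)$. Since $\Surv_\la$ partitions as the disjoint union of $\{(T, T, \sigma) : (T, \sigma) \in \Surv_{\la, \rho}\}$ over the $n$ removable rim-hooks, and $c_n$ records which block we land in, the composition is a bijection.

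The main obstacle is purely bookkeeping: explicitly describing the order-preserving relabeling that embeds $\sigma^*$ into $S_n$, and verifying that the inverse map recovers both $\rho$ (in (b), given; in (a), as the location of the largest label in the final tableau) and the cycle of $\sigma$ containing $1$ from the assembled object. The counts verify consistency: $|\Surv_{\mu, \rho}| = (n-1)(n-2)\cdots(n-L+1) \cdot (n-L)! = (n-1)!$, and summing over the $n$ removable rim-hooks of $\dg(\la)$ gives $|\Surv_\la| = n \cdot (n-1)! = n!$, producing the promised $\la$-parametrized combinatorial interpretations of $n!$.
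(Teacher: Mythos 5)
Your proof is correct and, at its core, constructs exactly the same bijection as the paper: both use the cell-to-removable-rim-hook correspondence from the $\la=\mu$ case of Theorem~\ref{thm:local2} to let one coordinate pick the next rim-hook, and both interpret the subsequent coordinates as ``the $c$th smallest element not yet used'' when filling the corresponding cycle of $\sigma$, working from the outermost rim-hook inward and from the rightmost cycle leftward. The only organizational difference is that the paper phrases the construction iteratively (peel off a rim-hook, fill a cycle, repeat), establishing (a) directly and then observing that (b) is the same construction with the first coordinate dropped, whereas you run it as a mutual strong induction, proving (b)$_n$ from (a)$_{n-L}$ via an order-preserving relabeling of $\sigma^*$, and then (a)$_n$ from (b)$_n$. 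The recursive framing makes the well-founded structure explicit and is arguably cleaner for verifying invertibility and for checking that $\cycC(\sigma)$ matches the content of $T$, but the underlying map is identical.
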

\begin{remark}\label{rem:novel-n!}
Part~(a) gives novel combinatorial interpretations of $n!$ parametrized
by the integer partitions of $n$. Since $\sgn(S)\sgn(S)=+1$, we can say that
there are $n!$ pairs $(S,\si)$ such that: $S$ is an \emph{unsigned} RHT
of shape $\la$ and some content $\be\in C(n)$; and $\cycC(\si)=\be$.
If we specify in advance the cells occupied by the last rim-hook in $S$,
then part~(b) says there are $(n-1)!$ such objects.
\end{remark}
\begin{proof}
As a preliminary step, we define a numbering of the rim-hooks that
can be removed from the border of the diagram of a partition $\nu\in P(k)$.
We number the cells in $\dg(\nu)$ with $1,2,\ldots,k$ in the reading
order, working from the top row down and reading left to right in each row.
In other words, the cell in row $i$, column $j$ of $\dg(\nu)$ receives
the number $\nu_1+\cdots+\nu_{i-1}+j$. Each cell in $\dg(\nu)$ corresponds
to a unique removable rim-hook on the border of $\dg(\nu)$, as explained at 
the start of the proof of Theorem~\ref{thm:local2}. 
For $1\leq c\leq k$, define the \emph{$c$th border rim-hook of $\nu$} 
to be that rim-hook corresponding to the $c$th cell of $\dg(\nu)$.
This numbering system is a bijection from $[k]$ to the set of
such border rim-hooks for $\nu$.

Next, we construct the bijection $F_{\la}$ by modifying
the counting argument in the proof of Lemma~\ref{lem:harmonic}(b).
Given a choice sequence $\cc=(c_n,\ldots,c_1)\in\CS_n$, we create
$F_{\la}(\cc)=(S,S,\si)\in\Surv_{\la}$ by choosing the rim-hooks
in $S$ working from the outside border inward and simultaneously
filling the cycles in the canonical cycle notation for $\si$
from right to left. To begin, remove the $c_n$th border rim-hook 
from $\dg(\la)$. This rim-hook will be the largest-labeled rim-hook in $S$.
Say $L$ cells were removed; then we are left
with the diagram of a smaller partition $\la^{(1)}\in P(n-L)$.
Build the rightmost cycle of $\si$ by starting with $1$,
then using entries $c_{n-1},c_{n-2},\ldots,c_{n-(L-1)}$ from $\cc$
to choose the remaining values in the cycle. Here and below,
an entry $c$ in the choice sequence means ``fill the next position
in the cycle with the $c$th smallest element not yet appearing in $\si$.''
To continue, remove the $c_{n-L}$th border rim-hook from
$\dg(\la^{(1)})$, say containing $L'$ cells, leaving the diagram
of some $\la^{(2)}$. Build the next-rightmost cycle of $\si$
by starting with the minimum element not used so far, then
using entries $c_{n-L-1},c_{n-L-2},\ldots,c_{n-L-(L'-1)}$ to choose
the remaining values in the cycle. Then remove the $c_{n-L-L'}$th
border rim-hook from $\dg(\la^{(2)})$, and continue similarly.
The passage from the choice sequence to the triple $(S,S,\si)$ is
reversible, as illustrated in the example below. 
So $F_{\la}$ is a bijection.

Part~(b) is proved in the same manner. Here we do not need $c_n\in [n]$
to choose the outermost rim-hook to remove from $\dg(\mu)$, since that
choice has been fixed in advance to be $\rho$. Thus, choice sequences
in $\CS_{n-1}$ suffice to construct uniquely each object in $\Surv_{\mu,\rho}$.
% OLD informal introduction to proof:
%That argument made a sequence of choices to fill the cycles of $\si$,
%but certain choices (corresponding to the factors in the denominator 
%$Z_{\be}$) could be made in only 1 way since each cycle needed to 
%start with the minimum element not yet picked. The idea is to restore those 
%missing factors by making new choices that assemble the RHT $S$ by 
%removing a succession of rim-hooks from the border of $\dg(\la)$.
\end{proof}

% NAL: I edited the example. I tried to soften the notation some.
\begin{example}\label{ex:get-choice}
In this example, we compute the choice sequence 
$F_{\la}^{-1}(S,S,\si)=(c_{18},c_{17},\ldots,c_1)$,
where $\la=(5,4,4,3,2)\in P(18)$, and $S$ and $\si$ are shown here:
\[ S = \yt{11344,1334,2344,255,25}, \qquad 
\si = (9,16,14)(7,13,15)(6,11,18,12)(2,17,3,10,8)(1,4,5). \]
To recover $c_{18}$, we use the numbering of the cells of $\dg(\la)$
shown here:
\[ \dg(\la)=\yt{12345,6789,{10}{11}{12}{13},{14}{*(lightgray)15}
   {*(lightgray)16},{17}{*(lightgray)18}} \]
We see that the highest-numbered rim-hook in $S$ (namely, the set
of cells containing $5$) is the $15$th border-rim hook of $\dg(\la)$, 
so $c_{18}=15$.  
To find $c_{17}$, we look at the second element in the rightmost
cycle of $\si$, namely $4$. The value $4$ is the third-smallest available
element (since $1$ has already been used), so $c_{17}=3$.
The next value in this cycle, namely $5$, is again the third-smallest
available element (since $1$ and $4$ have already been used), so $c_{16}=3$.
We abbreviate the reasoning in the last two sentences as follows:
\begin{align*}
c_{17}=3\mbox{ via } 
& (2,3,\underline{4},5,6,7,8,9,10,11,12,13,14,15,16,17,18);\\[0.1cm]
c_{16}=3\mbox{ via } 
& (2,3,\underline{5},6,7,8,9,10,11,12,13,14,15,16,17,18).
\end{align*}
To continue, we remove the rim-hook labeled $5$ to get a new
partition shape $\la^{(1)} = (5,4,4,1,1)$ with cells numbered
as follows:
\[  \dg(\la^{(1)})=\yt{123{*(lightgray)4}{*(lightgray)5},678
{*(lightgray)9},{10}{11}{*(lightgray)12}{*(lightgray)13},{14},{15}}.\]
The next highest rim-hook in $S$ (the set of cells containing $4$,
shaded gray in the preceding diagram) is the third
border rim-hook of $\la^{(1)}$, so $c_{15}=3$.
We proceed by examining the cycle $(2,17,3,10,8)$ in $\si$, which
necessarily begins with $2$. Reasoning as above, we deduce:
\begin{align*}
c_{14}=13\mbox{ via }
&(3,6,7,8,9,10,11,12,13,14,15,16,\underline{17},18);\\[0.1cm]
c_{13}=1 \mbox{ via } 
&(\underline{3},6,7,8,9,10,11,12,13,14,15,16,18);\\[0.1cm]
c_{12}=5 \mbox{ via }
&(6,7,8,9,\underline{10},11,12,13,14,15,16,18);\\[0.1cm]
c_{11}=3 \mbox{ via }
&(6,7,\underline{8},9,11,12,13,14,15,16,18).
\end{align*}
The computation continues similarly, as shown in the next figures.
\begin{center}
\begin{multicols}{2}
$\dg(\la^{(2)}) = \yt{12{*(lightgray)3},4{*(lightgray)5}{*(lightgray)6},7{*(lightgray)8},9,{10}}$ \columnbreak 
\[\begin{array}{ll}
c_{10}=2 & \text{and 6 begins the next cycle of $\si$;}\\[0.1cm]
c_9=3 & \text{ via } (7,9,\underline{11},12,13,14,15,16,18);\\[0.1cm]
c_8=8 & \text{ via } (7,9,12,13,14,15,16,\underline{18});\\[0.1cm]
c_7=3 & \text{ via } (7,9,\underline{12},13,14,15,16).
\end{array}\]
\end{multicols}
\end{center}
\begin{center}
\begin{multicols}{2}
$\dg(\la^{(3)}) = 
\yt{12,3,{*(lightgray)4},{*(lightgray)5},{*(lightgray)6}}$ \columnbreak 
\[\begin{array}{ll}
c_6=4 & \text{and $7$ begins the next cycle of $\si$;} \\[0.1cm]
c_5=2 & \text{ via } (9,\underline{13},14,15,16);\\[0.1cm]
c_4=3 & \text{ via } (9,14,\underline{15},16).
\end{array}\]
\end{multicols}
\end{center}
\begin{center}
\begin{multicols}{2}
$\dg(\la^{(4)}) = 
\yt{{*(lightgray)1}{*(lightgray)2},{*(lightgray)3}}$\columnbreak 
\[\begin{array}{ll}
c_3=1 & \text{and $9$ begins the next cycle of $\si$;} \\[0.1cm]
c_2=2 & \text{ via } (14,\underline{16});\\[0.1cm]
c_1=1 & \text{ via } (\underline{14}).
\end{array}\]
\end{multicols}
\end{center}
In summary, we have found
$F_{\la}^{-1}(S,S,\si) = (15,3,3,3,13,1,5,3,2,3,8,3,4,2,3,1,2,1)$.
Letting $\rho$ be the rim-hook of $S$ marked with $5$s, we also have
$F_{\la,\rho}^{-1}(S,\si)=(3,3,3,13,1,5,3,2,3,8,3,4,2,3,1,2,1)$.
\end{example}

\begin{remark}\label{rem:other-perm}
So far we have used permutations $\si\in S_n$ that permute the standard
set $\{1,2,\ldots,n\}$. The definition of $\cycC(\si)$ and all related
constructions still work if we replace this set by any given set of 
$n$ integers. This remark is needed below, when we operate on sub-permutations
of $\si$ where some of the cycles have been temporarily deleted.
\end{remark}

The involution $\mcI_{\la,\mu}$ acts on input $(S,T,\si)\in\mcP_{\la,\mu}$ 
as follows.  If $\la=\mu$ and the input is in $\Surv_{\la}$, then this
is a fixed point.  Otherwise let $\be=(\be_1,\ldots,\be_\ell)$ be the content 
composition for $S$ and $T$.  Remove the outermost rim-hook from $S$ 
(meaning the $\be_\ell$ cells in the filling of $\dg(\la)$ labeled $\ell$),
remove the outermost rim-hook from $T$ 
(meaning the $\be_\ell$ cells in the filling of $\dg(\mu)$ labeled $\ell$),
and remove the rightmost cycle in the canonical cycle notation of $\si$.
Do this repeatedly until reaching a survivor object $(S^0,S^0,\si^0)$
for some shape $\ga$, where $\ga$ might be empty. 
Let $(S',T',\si')$ be the immediately preceding object.
We go from $S^0$ to $S'$ by adding a rim-hook $\eta$ of size $\be_k$
to get an object of some shape $\bla$.
We go from $S^0$ to $T'$ by adding a rim-hook $\rho$ of size $\be_k$
to get an object of some shape $\bmu$.
Here, $\si^0$ consists of the leftmost $k-1$ cycles in $\si$,
while $\si'$ consists of the leftmost $k$ cycles in $\si$.
%Suppose this survivor was reached after 
%removing the cells labeled $k$ from $S$ 
%(these cells forming a rim-hook $\eta$ of size $\be_k$),
%removing the cells labeled $k$ from $T$ 
%(these cells forming a rim-hook $\rho$ of size $\be_k$), 
%and removing the $k$th cycle from the left in $\si$.
%Let $S'$ be the part of $S$ containing values in $[k]$;
%let $T'$ be the part of $T$ containing values in $[k]$;
%let $\si'$ be the part of $\si$ consisting of the leftmost $k$ cycles
%in the canonical cycle notation.
%Let $\bla$ be the partition consisting of the cells in $\ga$ and $\eta$,
%and let $\bmu$ be the partition consisting of the cells in $\ga$ and $\rho$.
By construction, $(T',\si')\in\Surv_{\bmu,\rho}$, 
and the set $G(\bla,\bmu)$ (defined in Theorem~\ref{thm:local2})
is nonempty. The proof of that theorem shows that $G(\bla,\bmu)$ consists
of exactly two oppositely-signed partitions and describes how to go from
one of these objects to the other. Let $\tilde{\ga}$ be the other object
appearing with $\ga$ in $G(\bla,\bmu)$, so that 
$\tilde{\eta}=\bla\setminus\tilde{\ga}$ and
$\tilde{\rho}=\bmu\setminus\tilde{\ga}$ are rim-hooks of the same size.

To build $\mcI_{\la,\mu}(S,T,\si)$, first compute 
$(T'',\si'')=F_{\bmu,\tilde{\rho}}\circ F^{-1}_{\bmu,\rho}(T',\si')
\in\Surv_{\bmu,\tilde{\rho}}$ using the bijections
from Theorem~\ref{thm:novel-n!}(b), as modified in Remark~\ref{rem:other-perm}.
Get $S''$ by copying all values in $T''$ in the sub-shape $\tilde{\ga}$,
then filling the rim-hook $\tilde{\eta}$ with the next unused value $\tilde{k}$.
To finish, restore the previously removed rim-hooks
of lengths $\be_{k+1},\ldots,\be_{\ell}$
to the fillings $S''$ and $T''$ (respectively)
in the same positions they occupied in $S$ and $T$.
Here the labels (originally $k+1,k+2,\ldots$) of the restored
rim-hooks get renumbered to be $\tilde{k}+1,\tilde{k}+2,\ldots$.
Append to the right end of $\si''$ the
cycles previously removed from the right end of $\si$.
The resulting triple is $\mcI_{\la,\mu}(S,T,\si)$.

Acting by $\mcI_{\la,\mu}$ again reverses all the steps and returns us to 
$(S,T,\si)$, so $\mcI_{\la,\mu}$ is an involution on $\mcP_{\la,\mu}$. 
The fixed point set of $\mcI_{\la,\la}$ in $\mcP_{\la,\la}$ has size $n!$, 
by Theorem~\ref{thm:novel-n!}(a).
This completes our bijective proof of $A_n\bB_n=n!I_{P(n)}$.

\begin{example}
Let $\la=(4,3,2,1)$, $\mu=(4,3,3)$.  
We compute $\mcI_{\la,\mu}(S,T,\si)$ for the following objects:
\[ S = \yt{1133,123,22,2}, \quad T = \yt{1122,122,333},
\quad \si = (5,8,6)(2,10,9,4)(1,3,7).  \]
Here, $\be=(3,4,3)$,
$\sgn(S) = -1\cdot 1\cdot -1 = 1$, and $\sgn(T) = -1\cdot -1\cdot 1 = 1$. 
To reach a survivor $(S^0,S^0,\si^0)$,
we remove the rim-hooks labeled 3 and 2 from $S$ and $T$,
and we remove the two rightmost cycles of $\si$.
This produces $\si^0=(5,8,6)$ and $S^0 = \yt{11,1}$ of shape $\ga=(2,1)$. 
We construct $S'$ and $T'$ by restoring 
the rim-hooks labeled 2 as shown here:
\[
S^0 = \yt{11,1} \xrightarrow{\text{add }\eta} S' = \yt{11,1{*(lightgray) 2},{*(lightgray) 2}{*(lightgray) 2},{*(lightgray) 2}}; \qquad 
S^0 = \yt{11,1} \xrightarrow{\text{add }\rho} 
T' = \yt{11{*(lightgray) 2}{*(lightgray) 2},1{*(lightgray) 2}{*(lightgray) 2}}.
 \]
We find that $\sigma' = (5,8,6)(2,10,9,4)$, 
$S'$ has shape $\bla=(2,2,2,1)$ and $T'$ has shape $\bmu = (4,3)$. 
By using the bijection in the proof of Theorem \ref{thm:local2}
(cf.~Example \ref{ex:local2}), we find $\tga = (2,2)$. 
This defines $\tilde{\eta}$ and $\tilde{\rho}$, which are shaded in gray here:
\boks{0.3}
\[
\tilde{\eta} = \bla\smin \tga = 
\y{2,2}*[*(lightgray)]{0,0,2,1},\qquad
\tilde{\rho} = \bmu\smin \tga = 
\y{2,2}*[*(lightgray)]{2+2,2+1}.
\]
\boks{0.4}

Next we compute the choice sequence $F^{-1}_{\bmu,\rho}(T',\sigma')$,
which is the sequence $F^{-1}_{\bmu}(T',\sigma')$ with its first entry $c_7$
deleted. One can check that $c_7=2$ since $\rho$ is the second
border rim-hook of $\bmu$, but we do not need this value.
Note that $\si'$ permutes the set $\{2,4,5,6,8,9,10\}$,
and the rightmost cycle of $\si'$ begins with the smallest value in 
this set, namely $2$. Looking at the remaining entries in this cycle,
we recover $c_6$, $c_5$, $c_4$ as follows (using the same notation
from Example~\ref{ex:get-choice}):
%The next picture shows $\dg(\bmu)$ with its cells numbered
%and $\rho$ shaded in gray:
%\[
%\yt{12{*(lightgray) 3}{*(lightgray) 4},5{*(lightgray) 6}{*(lightgray) 7}}.
%\]
%Since $\rho$ is the second border rim-hook of $\bmu$, we find $c_7=2$.
%The largest labeled rim-hook (shaded in gray) is the 2nd rim-hook which gives us $c_7 = 2$. 
%The smallest unused number out of $(2,4,5,6,8,9,10)$ is (coincidentally) 2, so we remove it from our list. Looking at the cycle $(2,10,9,4)$, we see that 10 is the 6th smallest entry, which when removed makes 9 the 5th smallest entry and finally, 4 is the smallest available entry. 
%Following the notation of the previous example, we can write
%\begin{center}
%\begin{multicols}{2}
%$\dg(\bmu) = \yt{12{*(lightgray) 3}{*(lightgray) 4},5{*(lightgray) 6}{*(lightgray) 7}}$ \columnbreak 
\[\begin{array}{ll}
%c_{7}=2 & \text{and 2 begins the next cycle of $\si'$;}\\[0.1cm]
c_6=6 & \text{ via } (4,5,6,8,9,\underline{10});\\[0.1cm]
c_5=5 & \text{ via } (4,5,6,8,\underline{9});\\[0.1cm]
c_4=1 & \text{ via } (\underline{4},5,6,8).
\end{array}\]
%\end{multicols}
%\end{center}
Removing the cells containing $2$ from $T'$ leaves the shape $(2,1)$.
The rim-hook labeled $1$ in $T'$ is the first border rim-hook of this shape,
so $c_3=1$. We deduce the rest of the choice sequence as follows:
%\begin{center}
%\begin{multicols}{2}
%$\dg(\tga) = \yt{{*(lightgray) 1}{*(lightgray) 2},{*(lightgray)3}}$\columnbreak
\[\begin{array}{ll}
c_{3}=1 & \text{and 5 begins the next cycle of $\si'$;}\\[0.1cm]
c_2=2& \text{ via } (6,\underline{8});\\[0.1cm]
c_1=1 & \text{ via } (\underline{6}).
\end{array}\]
%\end{multicols}
%\end{center}

So far, we have $F_{\bmu}^{-1}(T',\sigma') = (2,6,5,1,1,2,1)$ 
and $\mathbf{c}=F_{\bmu,\rho}^{-1}(T',\sigma')= (6,5,1,1,2,1)$.  
The numbered diagram $\yt{12{*(lightgray) 3}{*(lightgray) 4},
56{*(lightgray) 7}}$ shows that $\tilde{\rho}$ is the third border
rim-hook of $\bmu$. Therefore, setting $\mathbf{c'}=(3,6,5,1,1,2,1)$,
we have $(T'',\si'')=F_{\bmu, \tilde{\rho}}(\textbf{c})=F_{\bmu}(\textbf{c}')$.
We compute $T''$ and $\si''$ as follows.
Like $\si'$, $\si''$ permutes the set $\{2,4,5,6,8,9,10\}$ 
and the rightmost cycle of $\si''$ begins with $2$.
This cycle contains $|\tilde{\rho}|=3$ values. We use $c'_6=6$
and $c'_5=5$ to determine these values, as follows:
\[\begin{array}{ll}
%(2,\blk,\blk) \text{ via } (\underline{2},4,5,6,8,9,10) \text{ as } 2 \text{ is s.a.v.};\\[0.1cm]
(2,10,\blk) &\text{ via } (4,5,6,8,9,\underline{10})\text{ and } c_6' = 6;\\[0.1cm]
(2,10,9) &\text{ via } (4,5,6,8,\underline{9})\text{ and } c_5' = 5.
\end{array}\]
%Here, s.a.v. stands for ``(the) smallest available value". 
%We now proceed to construct the rest of the cycles:
The next entry $c_4'=1$ tells us which rim-hook to remove from the
remaining shape $(2,2)$, as shown by the gray cells in the numbered
diagram $\yt{1{*(lightgray) 2},{*(lightgray) 3}{*(lightgray) 4}}$.
We begin the next cycle of $\si''$ with the least available value $4$.
The remaining two values in this cycle are found as follows:
\[\begin{array}{ll}
%(4,\blk,\blk) \text{ via } (\underline{4},5,6,8) \text{ as } 4 \text{ is s.a.v.};\\[0.1cm]
(4,5,\blk) &\text{ via } (\underline{5},6,8) \text{ and } c_3' = 1;\\[0.1cm]
(4,5,8) &\text{ via } (6,\underline{8}) \text{ and } c_2' = 2.
\end{array}\]
We conclude by using $c_1'=1$ to remove the first border rim-hook
of the remaining shape $(1)$, as shown here: $\yt{{*(lightgray) 1}}$.
We use the least remaining value $6$ to make the leftmost cycle $(6)$ in 
$\si''$.  We find $T''$ by labeling the rim-hooks removed from $\bmu$ 
using labels $3$, $2$, $1$.  We find $S''$ by copying the rim-hooks of $T''$
in the shape $\tga$ and labeling the cells of $\tilde{\eta}$ with 3.
In summary, we have found
\[ S'' = \yt{12,22,33,3},\quad T'' = \yt{1233,223},\quad
\si'' = (6)(4,5,8)(2,10,9).  \]
Finally, we compute $\mcI_{\lambda,\mu}(S,T,\sigma)$ by restoring 
to $S''$ and $T''$ the rim-hooks originally labeled $3$ in $S$ and $T$ 
(using $4$ as their new label) and appending the removed 
cycle $(1,3,7)$ to $\si''$. This produces
\[ (S^*, T^*, \sigma^*) = \mcI_{\lambda,\mu}(S,T,\sigma) = 
\left(\yt{1244,224,33,3},\ \yt{1233,223,444},\ (6)(4,5,8)(2,10,9)(1,3,7)\right).
\]
Note that $\sgn(S^*) = 1\cdot -1\cdot -1 \cdot -1 = -1$ 
and $\sgn(T^*) = 1\cdot -1 \cdot -1 \cdot 1 = 1$, so 
$\sgn(S^*)\sgn(T^*) = - \sgn(S)\sgn(T)$ as needed. 
Acting on $(S^*,T^*,\si^*)$ by $\mcI_{\la,\mu}$ reverses all 
the preceding steps and recreates the original triple $(S,T,\si)$.
\end{example}

%%%%%%%%%%%%%%%%%%%%%%%%%%%%%%%%%%%%%%%%%%%%%%%%%%%%%%%%%%%%%%%%%%%%%%%%%%%%%
 
%%%%%%%%%%%%%%%%%%%%%%%%%%%%%%%%%%%%%%%%%%%%%%%%%%%%%%%%%%%%%%%%%%%%%%%%%%%
\end{document}